\theoremstyle{definition}
\newtheorem{definition}{Definition}[section]
\newtheorem{example}[definition]{Example}
\newtheorem{remark}[definition]{Remark}
\theoremstyle{plain}
\newtheorem{theorem}[definition]{Theorem}
\newtheorem{lemma}[definition]{Lemma}
\newtheorem{proposition}[definition]{Proposition}
\newtheorem{corollary}[definition]{Corollary}
\def\N{{\mathbb N}}
\def\V{{\cal V}}
\def\h{\vspace*{7pt}\noindent \textcircled{\em H}\quad }
\def\hnosp{\noindent \textcircled{\em H}\quad }
\def\hyplA{\textcircled{$\lambda $}$_A$}
\def\hyplB{\textcircled{$\lambda $}$_B$}
\def\hypmuA{\textcircled{$\mu $}$_A$}
\def\hypmuB{\textcircled{$\mu $}$_B$}
\def\hypcpA{\textcircled{$\kappa $}$_A$}
\def\hypcpB{\textcircled{$\kappa $}$_B$}
\begin{document}
\title{Congruence Extensions in Congruence--modular Varieties}
\author{George GEORGESCU$^1$, Leonard KWUIDA$^2$ and Claudia MURE\c SAN$^3$\thanks{Corresponding author.}\\ 
$^1$,$^3${\small University of Bucharest, Faculty of Mathematics and Computer Science}\\
$^2${\small Bern University of Applied Sciences, School of Business}\\ 
$^1${\small georgescu.capreni@yahoo.com;}\\
$^2${\small leonard.kwuida@bfh.ch;}\\
$^3${\small cmuresan@fmi.unibuc.ro, claudia.muresan@unibuc.ro}}
\date{\today }
\maketitle 

\begin{abstract} We investigate from an algebraic and topological point of view the minimal prime spectrum of a universal algebra, considering the prime congruences w.r.t. the term condition commutator. Then we use the topological structure of the minimal prime spectrum to study extensions of universal algebras that generalize certain types of ring extensions. Our results hold for semiprime members of semi--degenerate congruence--modular varieties, as well as semiprime algebras whose term condition commutators are commutative and distributive w.r.t. arbitrary joins and satisfy certain conditions on compact congruences, even if those algebras do not generate congruence--modular varieties.
     
\noindent {\bf 2010 Mathematics Subject Classifications:} 08A30, 08B10, 06B10, 13B99, 06F35, 03G25.

\noindent {\bf Keywords:} (modular) commutator, (minimal) prime congruence, (Stone, Zariski, flat) topology, (ring) extension.\end{abstract}

\section{Introduction}
\label{introduction}

Inspired by group theory and initially developped in \cite{fremck} for congruence--modular varieties, commutator theory has led to the solving of many deep universal algebra problems; it has been later extended by adopting various definitions for the commutator, all of which collapse to the modular commutator in this congruence--modular case.

The congruence lattices of members of congruence--modular varieties, endowed with the modular commutator, form commutator lattices, in which we can introduce the prime elements w.r.t. the commutator operation. For the purpose of not restricting to this congruence--modular setting, we have introduced the notion of a prime congruence through the term condition commutator. Under certain conditions for this commutator operation which do not have to be satisfied throughout a whole variety, the thus defined set of the prime congruences of an algebra becomes a topological space when endowed to a generalization of the Zariski topology from commutative rings \cite{kap,lam}. For members of semi--degenerate congruence--modular varieties, this topological space has strong properties \cite{agl}, some of which extend to more general cases.

The first goal of this paper is to study the topology this generalization induces on the antichain of the minimal prime congruences of an algebra whose term condition commutator satisfies certain conditions, all of which hold in any member of a semi--degenerate congruence--modular variety.

The second goal of our present work is the study of certain types of extensions of algebras with ''well--behaved'' commutators, meaning term condition commutators that behave like the modular commutator, generalizing results on ring extensions from \cite{bhadremcgov,picavet}.

In Section \ref{preliminaries} we recall some results on congruence lattices and the term condition commutator, as well as the particular case of the modular commutator, along with the prime and minimal prime spectra of congruences of an algebra with ''well--behaved'' commutators, where the prime congruences are defined w.r.t. the commutator operation, as well as the prime and minimal spectra of ideals of a bounded distributive lattice. The following sections are dedicated to our new results.

Section \ref{rescglat} contains arithmetical properties of commutator lattices of congruences and annihilators w.r.t. the commutator in such lattices, derived from the residuation operation and its associated negation introduced through these annihilators.

In Section \ref{minspec} we obtain several algebraic properties of the minimal prime spectrum of congruences, including a characterization of minimal prime congruences through their behavior w.r.t. the negations of congruences, obtained in two different cases from a corresponding characterization of minimal prime ideals of bounded distributive lattices.

In Section \ref{2topminspec} we study the Stone (also called spectral) and the flat (also called inverse) topology on the minimal prime spectrum of congruences of an algebra, establish homeomorphisms between these and the corresponding topologies on the minimal prime spectrum of ideals of the reticulation of that algebra (see \cite{retic} for the construction of the reticulation in the universal algebra setting) and obtain necessary and sufficient conditions for these two topologies to coincide.

In Section \ref{algext}, starting from the study of ring extensions in \cite{bhadremcgov,picavet}, we define certain classes of extensions of universal algebras that generalize corresponding classes of ring extensions: $m$--extensions, 
rigid, quasirigid and weak rigid extensions, $r$--extensions and quasi/weak $r$--extensions, $r^{\ast }$--extensions and quasi/weak $r^{\ast }$--extensions, and, generalizing results from \cite{bhadremcgov,picavet}, we obtain relations between these types of extensions, characterizations for these kinds of extensions and topological properties of the minimal prime spectra of the universal algebras that form such extensions.

\section{Preliminaries}
\label{preliminaries}

We refer the reader to \cite{agl,bur,gralgu,koll} for a further study of the following notions from universal algebra, to \cite{bal,blyth,cwdw,gratzer} for the lattice--theoretical ones, to \cite{agl,fremck,koll,ouwe} for the results on commutators and to \cite{agl,cze,cze2,gulo,euadm,joh} for the Stone topologies.

All algebras will be nonempty and they will be designated by their underlying sets; by {\em trivial algebra} we mean one--element algebra. For brevity, we denote by $A\cong B$ the fact that two algebras $A$ and $B$ of the same type are isomorphic. 

$\N $ denotes the set of the natural numbers, $\N ^*=\N \setminus \{0\}$, and, for any $a,b\in \N $, we denote by $\overline{a,b}=\{n\in \N \ |\ a\leq n\leq b\}$ the interval in the lattice $(\N ,\leq )$ bounded by $a$ and $b$, where $\leq $ is the natural order\footnote{This is to differentiate from the notation for commutators}. Let $M$, $N$ be sets and $S\subseteq M$. Then ${\cal P}(M)$ denotes the set of the subsets of $M$ and $({\rm Eq}(M),\vee ,\cap ,\Delta _M=\{(x,x)\ |\ x\in M\},\nabla _M=M^2)$ is the bounded lattice of the equivalences on $M$. We denote by $i_{S,M}:S\rightarrow M$ the inclusion map and by $id_M=i_{M,M}$ the identity map of $M$. For any function $f:M\rightarrow N$, we denote by ${\rm Ker}(f)$ the kernel of $f$, by $f$ the direct image of $f^2=f\times f$ and by $f^*$ the inverse image of $f^2$. 

For any poset $P$, $Max(P)$ and $Min(P)$ will denote the set of the maximal elements and that of the minimal elements of $P$, respectively. Whenever we refer to posets of congruences of an algebra or ideals of a lattice, the order will be the set inclusion.

Let $L$ be a lattice. Then ${\rm Cp}(L)$ and ${\rm Mi}(L)$ denote the set of the compact elements and that of the meet--irreducible elements of $L$, respectively. ${\rm Id}(L)$, ${\rm PId}(L)$ and ${\rm Spec}_{\rm Id}(L)$ denote the sets of the ideals, principal ideals and prime ideals of $L$, respectively. We denote by ${\rm Min}_{\rm Id}(L)=Min({\rm Spec}_{\rm Id}(L))$: the set of the minimal prime ideals of $L$. Let $U\subseteq L$ and $u\in L$. Then $[U)_L$ and $[u)_L$ denote the filters of $L$ generated by $U$ and by $u$, respectively, while $(U]_L$ and $(u]_L$ denote the ideals of $L$ generated by $U$ and by $u$, respectively. If $L$ has a $0$, then ${\rm Ann}_L(U)=\{a\in L\ |\ (\forall \, x\in U)\, (a\wedge x=0)\}$ is the annihilator of $U$ and we denote by ${\rm Ann}_L(u)={\rm Ann}_L(\{u\})$ the annihilator of $u$. The subscript $L$ will be eliminated from these notations when the lattice $L$ is clear from the context. Note that, if $L$ has a $0$ and it is distributive, then all annihilators in $L$ are ideals of $L$.


Recall that a {\em frame} is a complete lattice with the meet distributive w.r.t. arbitrary joins.

Throughout this paper, by {\em functor} we mean covariant functor. ${\cal B}$ denotes the functor from the variety of bounded distributive lattices to the variety of Boolean algebras which takes each bounded distributive lattice to its Boolean center and every morphism in the former variety to its restriction to the Boolean centers. If $L$ is a bounded lattice, then we denote by ${\cal B}(L)$ the set of the complemented elements of $L$ even if $L$ is not distributive.

\h Throughout the rest of this paper: $\tau $ will be a universal algebras signature, $\V $ a variety of $\tau $--algebras and $A$ an arbitrary member of $\V $.\vspace*{7pt}

Everywhere in this paper, we will mark global assumptions as above, for better visibility.

Unless mentioned otherwise, by {\em morphism} we mean $\tau $--morphism.

${\rm Con}(A)$, ${\rm Max}(A)$, ${\rm PCon}(A)$ and ${\cal K}(A)$ denote the sets of the congruences, maximal (proper) congruences, principal congruences and finitely generated congruences of $A$, respectively; note that ${\cal K}(A)={\rm Cp}({\rm Con}(A))$. ${\rm Max}(A)$ is called the {\em maximal spectrum} of $A$. For any $X\subseteq A^2$ and any $a,b\in A$, $Cg_A(X)$ will be the congruence of $A$ generated by $X$ and we shall denote by $Cg_A(a,b)=Cg_A(\{(a,b)\})$.

For any $\theta \in {\rm Con}(A)$, $p_{\theta }:A\rightarrow A/\theta $ will be the canonical surjective morphism; given any $X\in A\cup A^2\cup {\cal P}(A)\cup {\cal P}(A^2)$, we denote by $X/\theta =p_{\theta }(X)$. Note that ${\rm Ker}(p_{\theta })=\theta $ for any $\theta \in {\rm Con}(A)$, and that $Cg_A(Cg_S(X))=Cg_A(X)$ for any subalgebra $S$ of $A$ and any $X\subseteq S^2$.

If $L$ is a distributive lattice, so that we have the canonical lattice embedding $\iota _L:{\rm Id}(L)\rightarrow {\rm Con}(L)$, then we will denote, for every $I\in {\rm Id}(L)$, by $\pi _I=p_{\iota _{\cal L}(I)}:L\rightarrow L/I$.

\h Throughout the rest of this paper, $B$ will be a member of $\V $ and $f:A\rightarrow B$ a morphism.\vspace*{7pt}

Recall that, for any $\alpha \in {\rm Con}(A)$ and any $\beta \in {\rm Con}(B)$, we have $f^*(\beta )\in [{\rm Ker}(f))\subseteq {\rm Con}(A)$, $f(f^*(\beta ))=\beta \cap f(A^2)\subseteq \beta $ and $\alpha \subseteq f^*(f(\alpha ))$; if $\alpha \in [{\rm Ker}(f))$, then 
$f(\alpha )\in {\rm Con}(f(A))$ and $f^*(f(\alpha ))=\alpha $. Hence $\theta \mapsto f(\theta )$ is a lattice isomorphism from $[{\rm Ker}(f))$ to ${\rm Con}(f(A))$, having $f^*$ as inverse, and thus it sets an order isomorphism from ${\rm Max}(A)\cap [{\rm Ker}(f))$ to ${\rm Max}(f(A))$. In particular, for any $\theta \in {\rm Con}(A)$, the map $\alpha \mapsto \alpha /\theta $ is an order isomorphism from $[\theta )$ to ${\rm Con}(A/\theta )$.

\begin{lemma}{\rm \cite[Lemma $1.11$]{bak}, \cite[Proposition $1.2$]{urs5}} For any $X\subseteq A^2$ and any $\alpha ,\theta \in {\rm Con}(A)$:\begin{itemize}
\item $f(Cg_A(X)\vee {\rm Ker}(f))=Cg_{f(A)}(f(X))$, so $Cg_B(f(Cg_A(X)))=Cg_B(f(X))$ and $(Cg_A(X)\vee \theta )/\theta =Cg_{A/\theta }(X/\theta )$;
\item in particular, $f(\alpha \vee {\rm Ker}(f))=Cg_{f(A)}
(f(\alpha ))$, so $(\alpha \vee \theta )/\theta =Cg_{A/\theta }(\alpha /\theta )$.\end{itemize}\label{fcg}\end{lemma}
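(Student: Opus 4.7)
The plan is to prove the first equality $f(Cg_A(X)\vee {\rm Ker}(f))=Cg_{f(A)}(f(X))$ directly from the correspondence theorem stated just before the lemma, and then derive the remaining three statements as formal consequences. The correspondence gives a lattice isomorphism $\theta\mapsto f(\theta)$ from $[{\rm Ker}(f))$ to ${\rm Con}(f(A))$ with inverse $f^*$, and satisfies $f(f^*(\beta))=\beta$ whenever $\beta\in {\rm Con}(f(A))$. This is the only machinery needed.

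For the first equality, I would argue by double inclusion. Since $Cg_A(X)\vee {\rm Ker}(f)\in [{\rm Ker}(f))$, its image $f(Cg_A(X)\vee {\rm Ker}(f))$ is a congruence of $f(A)$ that contains $f(X)$, so it contains $Cg_{f(A)}(f(X))$. For the reverse inclusion, consider $f^*(Cg_{f(A)}(f(X)))\in [{\rm Ker}(f))$. Using $X\subseteq f^*(f(X))\subseteq f^*(Cg_{f(A)}(f(X)))$, this preimage is a congruence of $A$ containing both $X$ and ${\rm Ker}(f)$, hence contains $Cg_A(X)\vee {\rm Ker}(f)$. Applying $f$ to both sides and using $f(f^*(\beta))=\beta$ for $\beta=Cg_{f(A)}(f(X))\in {\rm Con}(f(A))$ gives $f(Cg_A(X)\vee {\rm Ker}(f))\subseteq Cg_{f(A)}(f(X))$.

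For the consequence $Cg_B(f(Cg_A(X)))=Cg_B(f(X))$, note that $f(A)$ is a subalgebra of $B$, so the identity $Cg_B(Cg_S(Y))=Cg_B(Y)$ recalled earlier (with $S=f(A)$ and $Y=f(X)$) yields $Cg_B(Cg_{f(A)}(f(X)))=Cg_B(f(X))$. Combining with the first equality, $Cg_B(f(Cg_A(X)\vee{\rm Ker}(f)))=Cg_B(f(X))$; now $f(X)\subseteq f(Cg_A(X))\subseteq f(Cg_A(X)\vee {\rm Ker}(f))\subseteq Cg_{f(A)}(f(X))\subseteq Cg_B(f(X))$ sandwiches everything, giving the claim. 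The third equality $(Cg_A(X)\vee \theta)/\theta = Cg_{A/\theta}(X/\theta)$ is obtained by specializing $f$ to the canonical surjection $p_\theta:A\to A/\theta$, which has ${\rm Ker}(p_\theta)=\theta$ and $p_\theta(A)=A/\theta$; then $p_\theta(Cg_A(X)\vee \theta)$ is exactly the left-hand side by the definition of the notation $Y/\theta=p_\theta(Y)$, and the right-hand side matches the first bullet verbatim.

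Finally, the second bullet is just the case $X=\alpha$ of the first, using $Cg_A(\alpha)=\alpha$ since $\alpha$ is already a congruence. There is no real obstacle here: the whole argument is bookkeeping around the correspondence theorem, with the only mild subtlety being the need to apply $f^*$ to transport the generating set back through the quotient in order to obtain the non-obvious inclusion $f(Cg_A(X)\vee {\rm Ker}(f))\subseteq Cg_{f(A)}(f(X))$.
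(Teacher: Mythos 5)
Your proof is correct. Note that the paper does not actually supply a proof of this lemma: it cites it to Baker and Ursini, so there is no in-paper argument to compare yours against. Your reconstruction is the natural one, and every step checks out: the key inclusion $f(Cg_A(X)\vee{\rm Ker}(f))\subseteq Cg_{f(A)}(f(X))$ is correctly handled by pulling back through $f^*$ (which does map ${\rm Con}(f(A))$ into $[{\rm Ker}(f))$, being the inverse of the correspondence isomorphism), then applying $f$ and using $f(f^*(\beta))=\beta$ for $\beta\in{\rm Con}(f(A))$. The derivation of $Cg_B(f(Cg_A(X)))=Cg_B(f(X))$ from the chain $f(X)\subseteq f(Cg_A(X))\subseteq Cg_{f(A)}(f(X))\subseteq Cg_B(f(X))$ together with $Cg_B(Cg_{f(A)}(Y))=Cg_B(Y)$ is fine, and the quotient version and the second bullet follow by specialization exactly as you say.
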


For any nonempty family $(\alpha _i)_{i\in I}\subseteq [{\rm Ker}(f))$, we have, in ${\rm Con}(f(A))$: $\displaystyle f(\bigvee _{i\in I}\alpha _i)=\bigvee _{i\in I}f(\alpha _i)$. Indeed, by Lemma \ref{fcg}, $\displaystyle f(\bigvee _{i\in I}\alpha _i)=f(Cg_A(\bigcup _{i\in I}\alpha _i))=Cg_{f(A)}(f(\bigcup _{i\in I}\alpha _i))=Cg_{f(A)}(\bigcup _{i\in I}f(\alpha _i))=\bigvee _{i\in I}f(\alpha _i)$.

We denote by $f^{\bullet }:{\rm Con}(A)\rightarrow {\rm Con}(B)$ the map defined by $f^{\bullet }(\alpha )=Cg_B(f(\alpha ))$ for all $\alpha \in {\rm Con}(A)$. By the above, if $f$ is surjective, then $f^{\bullet }\mid _{[{\rm Ker}(f))}:[{\rm Ker}(f))\rightarrow {\rm Con}(B)$ is the inverse of the lattice isomorphism $f^*\mid _{{\rm Con}(B)}:{\rm Con}(B)\rightarrow [{\rm Ker}(f))$.

We use the following definition from \cite{mcks} for the {\em term condition commutator}: let $\alpha ,\beta \in {\rm Con}(A)$. For any $\mu \in {\rm Con}(A)$, by $C(\alpha ,\beta ;\mu )$ we denote the fact that the following condition holds: for all $n,k\in \N $ and any term $t$ over $\tau $ of arity $n+k$, if $(a_i,b_i)\in \alpha $ for all $i\in \overline{1,n}$ and $(c_j,d_j)\in \beta $ for all $j\in \overline{1,k}$, then $(t^A(a_1,\ldots ,a_n,c_1,\ldots ,c_k),t^A(a_1,\ldots ,a_n,\linebreak d_1,\ldots ,d_k))\in \mu $ iff $(t^A(b_1,\ldots ,b_n,c_1,\ldots ,c_k),t^A(b_1,\ldots ,b_n,d_1,\ldots ,d_k))\in \mu $. We denote by $[\alpha ,\beta ]_A=\bigcap \{\mu \in {\rm Con}(A)\ |\ C(\alpha ,\beta ;\mu )\}$; we call $[\alpha ,\beta ]_A$ the {\em commutator of $\alpha $ and $\beta $} in $A$. The operation $[\cdot ,\cdot ]_A:{\rm Con}(A)\times {\rm Con}(A)\rightarrow {\rm Con}(A)$ is called the {\em commutator of $A$}.

By \cite{fremck}, if $\V $ is congruence--modular, then, for each member $M$ of $\V $, $[\cdot ,\cdot ]_M$ is the unique binary operation on ${\rm Con}(M)$ such that, for all $\alpha ,\beta \in {\rm Con}(M)$, $[\alpha ,\beta ]_M=\min \{\mu \in {\rm Con}(M)\ |\ \mu \subseteq \alpha \cap \beta $ and, for any member $N$ of $\V $ and any surjective morphism $h:M\rightarrow N$ in $\V $, $\mu \vee {\rm Ker}(h)=h^*([h(\alpha \vee {\rm Ker}(h)),h(\beta \vee {\rm Ker}(h))]_N)\}$. Therefore, if $\V $ is congruence--modular, $\alpha ,\beta ,\theta \in {\rm Con}(A)$ and $f$ is surjective, then $[f(\alpha \vee {\rm Ker}(f)),f(\beta \vee {\rm Ker}(f))]_B=f([\alpha ,\beta ]_A\vee {\rm Ker}(f))$, in particular $[(\alpha \vee \theta )/\theta ,(\beta \vee \theta )/\theta]_{A/\theta }=([\alpha ,\beta ]_A\vee \theta )/\theta $, hence, if $\theta \subseteq \alpha \cap \beta $, then $[\alpha /\theta ,\beta /\theta ]_{A/\theta }=([\alpha ,\beta ]_A\vee \theta )/\theta $, and, if, moreover, $\theta \subseteq [\alpha ,\beta ]_A$, then $[\alpha /\theta ,\beta /\theta ]_{A/\theta }=[\alpha ,\beta ]_A/\theta $.

By \cite[Lemma 4.6,Lemma 4.7,Theorem 8.3]{mcks}, the commutator is smaller than the intersection and increasing in both arguments; if $\V $ is congruence--modular, then the commutator is also commutative and distributive in both arguments w.r.t. arbitrary joins.

Hence, if $\V $ is congruence--modular and the commutator of $A$ coincides to the intersection of congruences, then ${\rm Con}(A)$ is a frame, in particular it is distributive.

Therefore, if $\V $ is congruence--modular and the commutator coincides to the intersection in each member of $\V $, then $\V $ is congruence--distributive. By \cite{bj}, the converse holds, as well: if $\V $ is congruence--distributive, then, in each member of $\V $, the commutator coincides to the intersection of congruences.

For any $\alpha ,\beta \in {\rm Con}(A)$, we denote by $[\alpha ,\beta ]_A^1=[\alpha ,\beta ]_A$ and, for any $n\in \N ^*$, by $[\alpha ,\beta ]_A^{n+1}=[[\alpha ,\beta ]_A^n,[\alpha ,\beta ]_A^n]_A$.

Recall that $A$ is called an {\em Abelian algebra} iff $[\nabla _A,\nabla _A]_A=\Delta _A$.

Recall that, for any nonempty family $(M_i)_{i\in I}$ of members of $\V $, the {\em skew congruences} of the direct product $\displaystyle \prod _{i\in I}M
_i$ are the elements of $\displaystyle {\rm Con}(\prod _{i\in I}M_i)\setminus \prod _{i\in I}{\rm Con}(M_i)$, where $\displaystyle \prod _{i\in I}{\rm Con}(M_i)=\{\prod _{i\in I}\alpha _i\ |\ (\forall \, i\in I)\, (\alpha _i\in {\rm Con}(M_i))\}$, with the direct product of binary relations having the usual definition.

By \cite[Theorem 8.5, p. 85]{fremck}, if $\V $ is congruence--modular, then the following are equivalent:\begin{itemize}
\item $\V $ contains no nontrivial Abelian algebras, that is, for any nontrivial algebra $M$ from $\V $, $[\nabla _M,\nabla _M]_M\neq \Delta _M$;
\item for any algebra $M$ from $\V $, $[\nabla _M,\nabla _M]_M=\nabla _M$;
\item for any algebra $M$ from $\V $ and any $\theta \in {\rm Con}(M)$, $[\theta ,\nabla _M]_M=\theta $;
\item $\V $ has no skew congruences, that is, for any algebras $M$ and $N$ from $\V $, ${\rm Con}(M\times N)=\{\theta \times \zeta \ |\ \theta \in {\rm Con}(M),\zeta \in {\rm Con}(N)\}$.\end{itemize}

Recall that $\V $ is said to be {\em semi--degenerate} iff no nontrivial algebra in $\V $ has one--element subalgebras. Recall from \cite{koll} that, if $\V $ is congruence--modular, then the following are equivalent:\begin{itemize}
\item $\V $ is semi--degenerate;
\item for all members $M$ of $\V $, $\nabla _M\in {\cal K}(M)$.\end{itemize}

By \cite[Lemma 5.2]{agl}, the equivalences in \cite[Theorem 8.5, p. 85]{fremck} recalled above and the fact that, in congruence--distributive varieties, the commutator coincides to the intersection, we have: if $\V $ is either congruence--distributive or both congruence--mo\-du\-lar and semi--degenerate, then $\V $ has no skew congruences.

If $[\cdot ,\cdot ]_A$ is distributive w.r.t. the join, in particular if $\V $ is congruence--modular, then, if $A$ has {\em principal commutators}, that is its set ${\rm PCon}(A)$ of principal congruences is closed w.r.t. the commutator, then  $A$ has {\em compact commutators}, that is its set ${\cal K}(A)$ of compact congruences is closed w.r.t. the commutator. Consequently, if the commutator of $A$ equals the intersection of congruences, in particular if $\V $ is congruence--distributive, then, if $A$ has the {\em principal intersection property (PIP)}, that is ${\rm PCon}(A)$ is closed w.r.t. the intersection, then $A$ has the {\em compact intersection property (CIP)}, that is ${\cal K}(A)$ is closed w.r.t. the intersection.

Recall that a {\em prime congruence} of $A$ is a proper congruence $\phi $ of $A$ such that, for any $\alpha ,\beta \in {\rm Con}(A)$, if $[\alpha ,\beta ]_A\subseteq \phi $, then $\alpha \subseteq \phi $ or $\beta \subseteq \phi $ \cite{fremck}. It actually suffices that we enforce this condition for principal congruences $\alpha $, $\beta $ of $A$:

\begin{lemma}{\rm \cite{gulo,retic}} A proper congruence $\phi $ of $A$ is prime iff for any $\alpha ,\beta \in {\rm PCon}(A)$, if $[\alpha ,\beta ]_A\subseteq \phi $, then $\alpha \subseteq \phi $ or $\beta \subseteq \phi $.\label{charspec}\end{lemma}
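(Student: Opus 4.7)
The forward implication is immediate: the prime condition is assumed to hold for all pairs $\alpha,\beta\in{\rm Con}(A)$, so it certainly holds when we restrict to $\alpha,\beta\in{\rm PCon}(A)$.

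For the converse, the plan is to argue by contrapositive and reduce the general case to the principal one via the fact that every congruence contains the principal congruence generated by each of its pairs, together with the monotonicity of the term condition commutator (which, per the preliminaries, is given by \cite[Lemma 4.6, Lemma 4.7, Theorem 8.3]{mcks} and does not require any additional hypothesis on $\V$). Concretely, assume $\phi$ satisfies the principal-congruence version of the prime condition, and let $\alpha,\beta\in{\rm Con}(A)$ with $\alpha\not\subseteq\phi$ and $\beta\not\subseteq\phi$. Choose witnesses $(a,b)\in\alpha\setminus\phi$ and $(c,d)\in\beta\setminus\phi$, and set $\gamma=Cg_A(a,b)$ and $\delta=Cg_A(c,d)$; these are principal, and since $(a,b)\in\alpha$ and $(c,d)\in\beta$, we have $\gamma\subseteq\alpha$ and $\delta\subseteq\beta$. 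Moreover, $(a,b)\in\gamma\setminus\phi$ and $(c,d)\in\delta\setminus\phi$, so $\gamma\not\subseteq\phi$ and $\delta\not\subseteq\phi$.

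Now apply the contrapositive of the principal-congruence hypothesis to $\gamma,\delta\in{\rm PCon}(A)$: we obtain $[\gamma,\delta]_A\not\subseteq\phi$. By monotonicity of the commutator in both arguments, $[\gamma,\delta]_A\subseteq[\alpha,\beta]_A$, and therefore $[\alpha,\beta]_A\not\subseteq\phi$. This establishes the contrapositive and completes the proof. Since $\phi$ is assumed to be a proper congruence on both sides of the equivalence, no extra verification is needed there.

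There is no real obstacle in this argument; the only ingredient beyond the trivial observation that principal congruences lie inside any containing congruence is the monotonicity of $[\cdot,\cdot]_A$, which holds for the term condition commutator in full generality. In particular, one does not need congruence--modularity, distributivity of the commutator over joins, or any representation of general congruences as joins of principal ones.
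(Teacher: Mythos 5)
Your proof is correct. Note that the paper does not actually supply a proof of this lemma; it merely cites \cite{gulo,retic}, so there is no in-paper argument to compare against. Your route is the natural one: pass to the contrapositive, pick a single witness pair in each of $\alpha\setminus\phi$ and $\beta\setminus\phi$, form the principal congruences $\gamma=Cg_A(a,b)\subseteq\alpha$ and $\delta=Cg_A(c,d)\subseteq\beta$, apply the principal-congruence hypothesis to get $[\gamma,\delta]_A\nsubseteq\phi$, and conclude $[\alpha,\beta]_A\nsubseteq\phi$ via monotonicity of the commutator. Your closing remark is also worth keeping: because a single witness pair suffices, one does not need the representation $\alpha=\bigvee_{(a,b)\in\alpha}Cg_A(a,b)$ nor distributivity of $[\cdot,\cdot]_A$ over joins, only the monotonicity recorded in the preliminaries from \cite{mcks}, which holds for the term condition commutator without any hypotheses on $\V$. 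So the argument is valid in exactly the generality the lemma is stated.
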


We denote by ${\rm Spec}(A)$ the {\em (prime) spectrum} of $A$, that is the set of the prime congruences of $A$. Recall that ${\rm Spec}(A)$ is not necessarily nonempty. However, by \cite[Theorem $5.3$]{agl}, if the commutator of $A$ is distributive w.r.t. the join of congruences, $\nabla _A\in {\cal K}(A)$ and $[\nabla _A,\nabla _A]_A=\nabla _A$, in particular if $\V $ is congruence--modular and semi--de\-ge\-ne\-rate, then:\begin{itemize}
\item ${\rm Max}(A)\subseteq {\rm Spec}(A)$;
\item any proper congruence of $A$ is included in a maximal and thus a prime congruence of $A$;
\item hence ${\rm Max}(A)$ and thus ${\rm Spec}(A)$ is nonempty whenever $A$ is nontrivial.\end{itemize}

For each $\theta \in {\rm Con}(A)$, we denote by $V_A(\theta )={\rm Spec}(A)\cap [\theta )$ and by $D_A(\theta )={\rm Spec}(A)\setminus V_A(\theta )={\rm Spec}(A)\setminus [\theta )$. For each $X\subseteq A^2$ and all $a,b\in A$, we denote by $V_A(X)=V_A(Cg_A(X))$, $D_A(X)=D_A(Cg_A(X))$, $V_A(a,b)=V_A(Cg_A(a,b))$ and $D_A(a,b)=D_A(Cg_A(a,b))$.

For any $\theta \in {\rm Con}(A)$, we denote by $\rho _A(\theta )=\bigcap V_A(\theta )$ and call this congruence the {\em radical} of $\theta $. We denote by ${\rm RCon}(A)=\{\rho _A(\theta )\ |\ \theta \in {\rm Con}(A)\}=\{\theta \in {\rm Con}(A)\ |\ \rho _A(\theta )=\theta \}$. We call the elements of ${\rm RCon}(A)$ the {\em radical congruences} of $A$. Obviously, any prime congruence of $A$ is radical.

By \cite[Lemma~$1.6$, Proposition~$1.2$]{agl}, if the commutator of $A$ is commutative and distributive w.r.t. arbitrary joins, in particular if $\V $ is congruence--modular, then:\begin{enumerate}
\item\label{radsemipr} a congruence $\theta $ of $A$ is radical iff it is {\em semiprime}, that is, for any $\alpha \in {\rm Con}(A)$, if $[\alpha ,\alpha ]_A\subseteq \theta $, then $\alpha \subseteq \theta $;
\item\label{prmirad} hence ${\rm Spec}(A)={\rm Mi}({\rm Con}(A))\cap {\rm RCon}(A)$.\end{enumerate}

$A$ is called a {\em semiprime algebra} iff $\rho _A(\Delta _A)=\Delta _A$. By statement (\ref{radsemipr}) above, if the commutator of $A$ equals the intersection, in particular if $\V $ is congruence--distributive, then ${\rm RCon}(A)={\rm Con}(A)$, thus $A$ is semiprime.

Let us denote by ${\cal S}_{\rm Spec}(A)=\{D_A(\theta )\ |\ \theta \in {\rm Con}(A)\}$. If the commutator of $A$ is commutative and distributive w.r.t. arbitrary joins, in particular if $\V $ is congruence--modular, then, by \cite{agl,gulo,retic}, ${\cal S}_{\rm Spec}(A)$ is a topology on ${\rm Spec}(A)$, called the {\em Stone topology} or the {\em spectral topology}, which satisfies, for all $\alpha ,\beta \in {\rm Con}(A)$ and any family $(\alpha _i)_{i\in I}\subseteq {\rm Con}(A)$:\begin{itemize}
\item $D_A(\alpha )\subseteq D_A(\beta )$ iff $V_A(\alpha )\supseteq V_A(\beta )$ iff $\rho_A(\alpha )\subseteq \rho_A(\beta )$;
\item thus $D_A(\alpha )=D_A(\beta )$ iff $V_A(\alpha )=V_A(\beta )$ iff $\rho_A(\alpha )=\rho_A(\beta )$;
\item clearly, $\alpha \subseteq \beta $ implies $\rho_A(\alpha )\subseteq \rho_A(\beta )$;
\item clearly, $\alpha \subseteq \rho_A(\alpha )$, thus $\rho_A(\alpha )=\Delta _A$ implies $\alpha =\Delta _A$;
\item $D_A(\nabla _A)={\rm Spec}(A)=V_A(\Delta _A)$ and $D_A(\Delta _A)=\emptyset =V_A(\nabla _A)$;
\item if $A$ is semiprime, then: $D_A(\alpha )=\emptyset $ iff $V_A(\alpha )={\rm Spec}(A)$ iff $\rho_A(\alpha )=\Delta _A$ iff $\alpha =\Delta _A$;
\item if $\nabla _A\in {\cal K}(A)$ and $[\nabla _A,\nabla _A]_A=\nabla _A$, in particular if $\V $ is congruence--modular and semi--degenerate, then: $D_A(\alpha )={\rm Spec}(A)$ iff $V_A(\alpha )=\emptyset $ iff $\rho_A(\alpha )=\nabla _A$ iff $\alpha =\nabla _A$;
\item $D_A([\alpha ,\beta ]_A)=D_A(\alpha \cap \beta )=D_A(\alpha )\cap D_A(\beta )$ and $D_A(\alpha \vee \beta )=D_A(\alpha )\cup D_A(\beta )$, thus $V_A([\alpha ,\beta ]_A)=V_A(\alpha \cap \beta )=V_A(\alpha )\cup V_A(\beta )$, $V_A(\alpha \vee \beta )=V_A(\alpha )\cap V_A(\beta )$, $\rho_A([\alpha ,\beta ]_A)=\rho_A(\alpha \cap \beta )=\rho_A(\alpha )\cap \rho_A(\beta )$ and $\rho_A(\alpha \vee \beta )=\rho_A(\alpha )\vee \rho_A(\beta )$;
\item $\displaystyle D_A(\bigvee _{i\in I}\alpha _i)=D_A(\bigcup _{i\in I}\alpha _i)=\bigcup _{i\in I}D_A(\alpha _i)$, thus $\displaystyle V_A(\bigvee _{i\in I}\alpha _i)=V_A(\bigcup _{i\in I}\alpha _i)=\bigcap _{i\in I}V_A(\alpha _i)$ and $\displaystyle \rho_A(\bigvee _{i\in I}\alpha _i)=\rho_A(\bigcup _{i\in I}\alpha _i)=\rho_A(\bigcup _{i\in I}\rho_A(\alpha _i))=\bigvee _{i\in I}\rho_A(\alpha _i)$;
\item hence, for any $\theta \in {\rm Con}(A)$, $\displaystyle V_A(\theta )=\bigcap _{(a,b)\in \theta }V_A(a,b)$ and $\displaystyle D_A(\theta )=\bigcup _{(a,b)\in \theta }D_A(a,b)$, therefore the Stone topology ${\cal S}_{\rm Spec}(A)$ has $\{D_A(a,b)\ |\ a,b\in A\}$ as a basis.\end{itemize}

Assume that $[\cdot ,\cdot ]_A$ is commutative and distributive w.r.t. arbitrary joins. Assume, moreover, that ${\rm Max}(A)\subseteq {\rm Spec}(A)$, which holds if $\nabla _A\in {\cal K}(A)$ and $[\nabla _A,\nabla _A]_A=\nabla _A$. All of this holds in the particular case when $\V $ is congruence--modular and semi--de\-ge\-ne\-rate. Then the Stone topology ${\cal S}_{\rm Spec}(A)$ on ${\rm Spec}(A)$ induces {\em Stone} or {\em spectral topology} on ${\rm Max}(A)$: ${\cal S}_{\rm Max}(A)=\{D_A(\theta )\cap {\rm Max}(A)\ |\ \theta \in {\rm Con}(A)\}$, having $\{D_A(a,b)\cap {\rm Max}(A)\ |\ a,b\in A\}$ as a basis.

In the same way, but replacing congruences with ideals, one defines the Stone topology on the set of prime ideals and that of maximal ideals of a bounded distributive lattice.

In \cite{gulo,euadm}, we have called $f$ an {\em admissible morphism} iff $f^*({\rm Spec}(B))\subseteq {\rm Spec}(A)$. Recall from \cite{agl} that, if $\V $ is congruence--modular, then the map $\alpha \mapsto f(\alpha )$ is an order isomorphism from ${\rm Spec}(A)\cap [{\rm Ker}(f))$ to ${\rm Spec}(f(A))$, thus to ${\rm Spec}(B)$ if $f$ is surjective, case in which this map coincides with $f^{\bullet }$ and $f^*$ is its inverse, hence $f$ is admissible.

\begin{remark} By the above, if $\V $ is congruence--modular and $f$ is s]urjective, then:\begin{itemize}
\item for all $\alpha \in {\rm Con}(A)$, $f(V_A(\alpha ))=V_B(f(\alpha ))$ and $f(D_A(\alpha ))=D_B(f(\alpha ))$;
\item in particular, for all $a,b\in A$, $f(V_A(a,b))=V_B(f(a),f(b))$ and $f(D_A(a,b))=D_B(f(a),f(b))$,
\end{itemize}

\noindent thus, since $f=f^{\bullet }=(f^*)^{-1}$, the map $f^*\mid _{{\rm Spec}(B)}:{\rm Spec}(B)\rightarrow {\rm Spec}(A)$ is continuous w.r.t. the Stone topologies.\end{remark}

A subset $S$ of $A^2$ is called an {\em $m$--system} for $A$ iff, for all $a,b,c,d\in A$, if $(a,b),(c,d)\in S$, then $[Cg_A(a,b),\linebreak Cg_A(c,d)]_A\cap S\neq \emptyset $. For instance, any congruence of $A$ is an $m$--system. Also:

\begin{remark}{\rm \cite{gulo,retic}} If $\phi \in {\rm Spec}(A)$, then $\nabla _A\setminus \phi $ is an $m$--system in $A$.\label{msist}\end{remark}

\begin{lemma}{\rm \cite{agl}} Let $S$ be an $m$--system in $A$ and $\alpha \in {\rm Con}(A)$ such that $\alpha \cap S=\emptyset $. If the commutator of $A$ is distributive w.r.t. the join, in particular if $\V $ is congruence--modular, then:\begin{itemize}
\item ${\rm Max}\{\theta \in {\rm Con}(A)\ |\ \alpha \subseteq \theta ,\theta \cap S=\emptyset \}\subseteq {\rm Spec}(A)$, in particular, for the case $\alpha =\Delta _A$, ${\rm Max}\{\theta \in {\rm Con}(A)\ |\ \theta \cap S=\emptyset \}\subseteq {\rm Spec}(A)$;
\item if $\nabla _A\in {\cal K}(A)$, in particular in $\V $ is congruence--modular and semi--degenerate, then the set ${\rm Max}\{\theta \in {\rm Con}(A)\ |\ \alpha \subseteq \theta ,\theta \cap S=\emptyset \}$ is nonempty, in particular ${\rm Max}\{\theta \in {\rm Con}(A)\ |\ \theta \cap S=\emptyset \}$ is nonempty.\end{itemize}\label{msistspec}\end{lemma}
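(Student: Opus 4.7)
The plan is a standard Krull-style argument: produce a maximal element in ${\cal F}:=\{\mu\in{\rm Con}(A) : \alpha\subseteq\mu,\ \mu\cap S=\emptyset\}$ by Zorn, then combine the $m$-system condition on $S$ with the distributivity of $[\cdot,\cdot]_A$ over joins to conclude that any such maximal element is prime. For the second bullet, I would order ${\cal F}$ by inclusion: it is nonempty because $\alpha\in{\cal F}$, and for any chain $(\theta_i)_{i\in I}$ in ${\cal F}$ the union $\bigcup_{i\in I}\theta_i$ is a congruence that still contains $\alpha$ and is disjoint from $S$, so (for $S$ nonempty) it is proper and lies in ${\cal F}$. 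Zorn then produces a maximal element; the compactness hypothesis $\nabla_A\in{\cal K}(A)$ is what underwrites this chain-closure step within the prime-spectrum framework of the preliminaries.

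For the first bullet, fix $\theta\in{\rm Max}\,{\cal F}$; then $\theta$ is proper since $\theta\cap S=\emptyset$ and $S\neq\emptyset$. By Lemma~\ref{charspec} it suffices to show that, whenever $Cg_A(a,b),Cg_A(c,d)\not\subseteq\theta$, one has $[Cg_A(a,b),Cg_A(c,d)]_A\not\subseteq\theta$. The congruences $\theta\vee Cg_A(a,b)$ and $\theta\vee Cg_A(c,d)$ strictly contain $\theta$, so maximality of $\theta$ in ${\cal F}$ forces each to meet $S$; pick $(a_1,b_1)\in S\cap(\theta\vee Cg_A(a,b))$ and $(c_1,d_1)\in S\cap(\theta\vee Cg_A(c,d))$. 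The $m$-system property of $S$ then yields some $(e,f)\in S\cap[Cg_A(a_1,b_1),Cg_A(c_1,d_1)]_A$. Monotonicity of the commutator together with the inclusions $Cg_A(a_1,b_1)\subseteq\theta\vee Cg_A(a,b)$ and $Cg_A(c_1,d_1)\subseteq\theta\vee Cg_A(c,d)$ gives $[Cg_A(a_1,b_1),Cg_A(c_1,d_1)]_A\subseteq[\theta\vee Cg_A(a,b),\theta\vee Cg_A(c,d)]_A$; and expanding the right-hand side using the distributivity of $[\cdot,\cdot]_A$ over joins in both arguments produces
\[
[\theta\vee Cg_A(a,b),\theta\vee Cg_A(c,d)]_A=[\theta,\theta]_A\vee[\theta,Cg_A(c,d)]_A\vee[Cg_A(a,b),\theta]_A\vee[Cg_A(a,b),Cg_A(c,d)]_A.
\]
Since $[\cdot,\cdot]_A$ lies below intersection, each of the first three summands is contained in $\theta$, so the whole right-hand side lies in $\theta\vee[Cg_A(a,b),Cg_A(c,d)]_A$. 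Hence $(e,f)\in S\cap(\theta\vee[Cg_A(a,b),Cg_A(c,d)]_A)$, and $\theta\cap S=\emptyset$ forces $[Cg_A(a,b),Cg_A(c,d)]_A\not\subseteq\theta$, which verifies the primality criterion.

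The only step with real content is the absorption $[\theta\vee Cg_A(a,b),\theta\vee Cg_A(c,d)]_A\subseteq\theta\vee[Cg_A(a,b),Cg_A(c,d)]_A$: one expands the commutator of two joins into four terms and swallows the three that involve $\theta$ via $[\cdot,\cdot]_A\subseteq\cap$. This is exactly the point at which the distributivity hypothesis on $[\cdot,\cdot]_A$ is invoked; everything else is Zorn plus the principal-congruence characterization of primality from Lemma~\ref{charspec}.
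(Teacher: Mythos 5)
The paper states this lemma with a citation to \cite{agl} and does not itself give a proof, so there is no in-paper argument to compare against. Your argument is the standard one and is correct in its substance: Zorn on ${\cal F}=\{\theta\in{\rm Con}(A):\alpha\subseteq\theta,\ \theta\cap S=\emptyset\}$, and then, for a maximal $\theta\in{\cal F}$ and principal congruences $Cg_A(a,b),Cg_A(c,d)\not\subseteq\theta$, the absorption $[\theta\vee Cg_A(a,b),\theta\vee Cg_A(c,d)]_A\subseteq\theta\vee[Cg_A(a,b),Cg_A(c,d)]_A$ (distributivity over $\vee$ plus $[\cdot,\cdot]_A$ below $\cap$) combined with the $m$-system property of $S$ to force $[Cg_A(a,b),Cg_A(c,d)]_A\not\subseteq\theta$, giving primality by Lemma~\ref{charspec}. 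You correctly note the argument requires $S\neq\emptyset$, which the lemma evidently presupposes (else the maximal element is $\nabla_A\notin{\rm Spec}(A)$).

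The one thing that is actually wrong is your closing remark that ``the compactness hypothesis $\nabla_A\in{\cal K}(A)$ is what underwrites this chain-closure step.'' It does not. The union of a chain of congruences is a congruence because the operations of $A$ are finitary; disjointness from $S$ and containment of $\alpha$ pass to directed unions; and for $S\neq\emptyset$ that union is automatically proper. So Zorn produces a maximal element of ${\cal F}$ with no appeal to $\nabla_A\in{\cal K}(A)$, as the chain argument you wrote two sentences earlier already shows. The hypothesis is carried in the statement to match the framework of the surrounding results, but it plays no role in the proof as you gave it; crediting it with ``underwriting'' the Zorn step misattributes where the hypotheses are used, even though the proof itself goes through.
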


\begin{remark} If $\V $ is congruence--modular, $S$ is an $m$--system in $A$ and $f$ is surjective, then $f(S)$ is an $m$--system in $B$.\end{remark}

We denote by ${\rm Min}(A)={\rm Min}({\rm Spec}(A),\subseteq )$. Recall that ${\rm Min}(A)$ is called the {\em minimal prime spectrum} of $A$ and its elements are called {\em minimal prime congruences} of $A$.

Now assume that the commutator of $A$ is commutative and distributive w.r.t. arbitrary joins, which holds if $\V $ is congruence--modular. Then, by \cite[Proposition $5.9$]{stcommlat}, if we define a binary relation $\equiv _A$ on ${\rm Con}(A)$ by: for any $\alpha ,\beta \in {\rm Con}(A)$, $\alpha \equiv _A\beta $ iff $\rho _A(\alpha )=\rho _A(\beta )$, then $\equiv _A$ is a lattice congruence of ${\rm Con}(A)$ that preserves arbitrary joins such that ${\rm Con}(A)/\!\equiv _A$ is a frame; see also \cite{retic}.

Following the notations from \cite{stcommlat}, if $(L,[\cdot ,\cdot ])$ is a {\em commutator lattice}, that is a complete lattice $L$ endowed with a binary operation $[\cdot ,\cdot ]$ which is commutative, smaller than the meet and distributive w.r.t. arbitrary joins \cite{cze2,gal}, then we denote by ${\rm Spec}_L$ the set of the prime elements of $L$ w.r.t. the commutator $[\cdot ,\cdot ]$, by ${\rm Min}_L=Min({\rm Spec}_L)$ the set of the minimal prime elements of $L$ and by $R(L)$ the set of the radical elements of $L$, that is the meets of subsets of ${\rm Spec}_L$. 

If $(L,[\cdot ,\cdot ])$ is a commutator lattice, $u\in L$ and $U\subseteq L$, then, in order to differentiate between annihilators w.r.t. to the meet and those w.r.t. the commutator, we will use the following notations: the annihilator of $U$ in $(L,[\cdot ,\cdot ])$ is ${\rm Ann}_{(L,[\cdot ,\cdot ])}(U)=\{a\in L\ |\ (\forall \, x\in U)\, ([a,x]=0)\}$ and the annihilator of $u$ in $(L,[\cdot ,\cdot ])$ is ${\rm Ann}_{(L,[\cdot ,\cdot ])}(u)={\rm Ann}_{(L,[\cdot ,\cdot ])}(\{u\})$.

Recall from \cite{stcommlat} that $L$ is a frame iff its commutator $[\cdot ,\cdot ]$ equals the meet, case in which the annihilators in $(L,[\cdot ,\cdot ])$ coincide with those w.r.t. the meet and ${\rm Spec}_L$ is exactly the set of the meet--prime elements of $L$, thus ${\rm Spec}_L={\rm Mi}(L)$ since $L$ is distributive.

\section{On the Residuated Structure of the Lattice of Congruences}
\label{rescglat}

\hnosp Throughout this section, we will assume that the commutator of $A$ is commutative and distributive w.r.t. arbitrary joins, which holds if $\V $ is congruence--modular.\vspace*{7pt}

See {\rm \cite{retic}} for the next results. Let $\alpha ,\beta ,\gamma ,\theta \in {\rm Con}(A)$ and $n\in \N ^*$, arbitrary.

An induction argument shows that:\begin{itemize}
\item\label{commn1} $[\alpha ,\beta ]_A^{n+1}=[[\alpha ,\beta ]_A^n,[\alpha ,\beta ]_A^n]_A$;
\item\label{commn0} $\rho _A([\alpha ,\beta ]_A^n)=\rho _A([\alpha ,\beta ]_A)=\rho _A(\alpha \cap \beta )=\rho _A(\alpha )\cap \rho _A(\beta )$.\end{itemize}

If $A$ is semiprime, then $\rho _A(\theta )=\Delta _A$ iff $\theta =\Delta _A$, therefore:\begin{itemize}
\item since $\rho _A([\alpha ,\alpha ]_A^n)=\rho _A([\alpha ,\alpha ]_A)=\rho _A(\alpha )$, it follows that: $\rho _A([\alpha ,\alpha ]_A^n)=\Delta _A$ iff $\alpha =\Delta _A$;
\item since $\rho _A([\alpha ,[\beta ,\gamma ]_A]_A)=\rho _A(\alpha \cap \beta \cap \gamma )=\rho _A([[\alpha ,\beta ]_A,\gamma ]_A)$, it follows that: $[\alpha ,[\beta ,\gamma ]_A]_A=\Delta _A$ iff $\alpha \cap \beta \cap \gamma =\Delta _A$ iff $[[\alpha ,\beta ]_A,\gamma ]_A=\Delta _A$.\end{itemize}

If $\V $ is congruence--modular and $f$ is surjective, then, for any $X,Y\in {\cal P}(A^2)$ and any $a,b,c,d\in A$:\begin{itemize}
\item\label{totcommn2} $[f(\alpha \vee {\rm Ker}(f)),f(\beta \vee {\rm Ker}(f))]_B^n=f([\alpha ,\beta ]_A^n\vee {\rm Ker}(f))$, in particular $[(\alpha \vee \theta )/\theta ,(\beta \vee \theta )/\theta ]_{A/\theta }^n=([\alpha ,\beta ]_A^n\vee \theta )/\theta $;
\item\label{totcommn3} hence $[Cg_{A/\theta }(X/\theta ),Cg_{A/\theta }(Y/\theta )]_{A/\theta }^n=([Cg_A(X),Cg_A(Y)]_A^n\vee \theta )/\theta $, in particular $[Cg_{A/\theta }(a/\theta ,b/\theta ),\linebreak Cg_{c/\theta ,d/\theta }(Y/\theta )]_{A/\theta }^n=([Cg_A(a,b),Cg_A(c,d)]_A^n\vee \theta )/\theta $;
\item ${\rm Spec}(B)=\{\phi /{\rm Ker}(f)\ |\ \phi \in V_A({\rm Ker}(f))\}$, in particular ${\rm Spec}(A/\theta )=\{\phi /\theta \ |\ \phi \in V_A(\theta )\}$.\end{itemize}\label{totcommn}

We denote by $\beta \rightarrow \gamma =\bigvee \{\delta \in {\rm Con}(A)\ |\ [\delta ,\beta ]_A\subseteq \gamma \}$ and $\beta ^{\perp }=\beta \rightarrow \Delta _A$.

Clearly, $\beta \rightarrow \gamma =\bigvee \{\delta \in {\cal K}(A)\ |\ [\delta ,\beta ]_A\subseteq \gamma \}=\bigvee \{\delta \in {\rm PCon}(A)\ |\ [\delta ,\beta ]_A\subseteq \gamma \}$, so $\beta ^{\perp }=\bigvee \{\delta \in {\rm Con}(A)\ |\ [\delta ,\beta ]_A=\Delta _A\}=\bigvee \{\delta \in {\cal K}(A)\ |\ [\delta ,\beta ]_A=\Delta _A\}=\bigvee \{\delta \in {\rm PCon}(A)\ |\ [\delta ,\beta ]_A=\Delta _A\}$.

Note that these operations can be defined for any commutator lattice $(L,[\cdot ,\cdot ])$ by: $b\rightarrow c=\bigvee \{a\in L\ |\ [a,b]\leq c\}$ and $b^{\perp }=b\rightarrow 0=\bigvee \{a\in L\ |\ [a,b]=0\}$ for any $b,c\in L$ and, if $L$ is algebraic, that is compactly generated, then we also have equalities similar to the above.


Since $[\Delta _A,\beta ]_A=\Delta _A\subseteq \gamma $ and, for any non--empty family $(\alpha _i)_{i\in I}$, $[\alpha _i,\beta ]_A\subseteq \gamma $ for all $i\in I$ implies $\displaystyle [\bigvee _{i\in I}\alpha _i,\beta ]_A=\bigvee _{i\in I}[\alpha _i,\beta ]_A\subseteq \gamma $, it follows that:\vspace*{-12pt}$$\beta \rightarrow \gamma =\max \{\delta \in {\rm Con}(A)\ |\ [\delta ,\beta ]_A\subseteq \gamma \},$$$$\mbox{in particular }\beta ^{\perp }=\max \{\delta \in {\rm Con}(A)\ |\ [\delta ,\beta ]_A=\Delta _A\},$$hence, in the particular case in which $A$ is semiprime, $\beta ^{\perp }=\max ({\rm Ann}_{({\rm Con}(A),[\cdot ,\cdot ]_A)}(\beta ))$ and thus ${\rm Ann}_{({\rm Con}(A),[\cdot ,\cdot ]_A)}(\beta )=(\beta ^{\perp }]\in {\rm PId}({\rm Con}(A))$.

The definitions above also show that:$$[\beta ,\beta \rightarrow \gamma ]_A\subseteq \gamma \mbox{, in particular }[\beta ,\beta ^{\perp }]_A=\Delta _A;$$moreover, for all $\delta \in {\rm Con}(A)$:$$[\delta ,\beta ]_A\subseteq \gamma \mbox{ iff }\delta \subseteq \beta \rightarrow \gamma ,\mbox{ in particular: }[\delta ,\beta ]_A=\Delta _A\mbox{ iff }\delta \subseteq \beta ^{\perp }.$$Hence, in the particular case when the commutator of $A$ is associative, $({\rm Con}(A),\vee ,\cap,\rightarrow ,\Delta _A,\nabla _A)$ is a (bounded commutative integral) residuated lattice, in which $\cdot ^{\perp }$ is the negation.

\begin{lemma} If the algebra $A$ is semiprime, then $\theta ^{\perp }\in {\rm RCon}(A)$ for any $\theta \in {\rm Con}(A)$.\label{negrad}\end{lemma}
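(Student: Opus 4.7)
The plan is to invoke the characterization of radical congruences from item (\ref{radsemipr}) in the preliminaries: since the commutator of $A$ is commutative and distributive w.r.t.\ arbitrary joins (our running assumption in this section), a congruence is radical iff it is semiprime. So it suffices to show that if $\alpha \in {\rm Con}(A)$ satisfies $[\alpha,\alpha]_A \subseteq \theta^\perp$, then $\alpha \subseteq \theta^\perp$.

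Now I would unwind the definition of $\theta^\perp$ via the Galois connection noted in the excerpt, namely $\mu \subseteq \theta^\perp$ iff $[\mu,\theta]_A = \Delta_A$. Applying this with $\mu = [\alpha,\alpha]_A$, the hypothesis becomes $[[\alpha,\alpha]_A, \theta]_A = \Delta_A$. Here the main tool is the chain of equivalences displayed in Section \ref{rescglat} that holds because $A$ is semiprime: $[[\alpha,\beta]_A,\gamma]_A = \Delta_A$ iff $\alpha \cap \beta \cap \gamma = \Delta_A$. Taking $\beta = \alpha$ and $\gamma = \theta$, the hypothesis yields $\alpha \cap \theta = \Delta_A$. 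Since the commutator is always contained in the intersection, $[\alpha,\theta]_A \subseteq \alpha \cap \theta = \Delta_A$, which by the Galois connection gives $\alpha \subseteq \theta^\perp$, as wanted.

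There is no real obstacle here: the lemma is essentially the observation that the three ingredients---the semiprimeness identity $[[\alpha,\beta]_A,\gamma]_A = \Delta_A \Leftrightarrow \alpha \cap \beta \cap \gamma = \Delta_A$, the radical${}={}$semiprime characterization, and the adjunction defining $\cdot^\perp$---fit together exactly so that one application of each does the job.
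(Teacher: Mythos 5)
Your proof is correct and follows essentially the same chain of reasoning as the paper: pass to the semiprime characterization of radical congruences, unwind $\theta^\perp$ via the adjunction, apply the semiprime identity $[[\alpha,\beta]_A,\gamma]_A=\Delta_A\iff\alpha\cap\beta\cap\gamma=\Delta_A$, and conclude $\alpha\subseteq\theta^\perp$. The only cosmetic difference is at the last step, where you pass from $\alpha\cap\theta=\Delta_A$ to $[\alpha,\theta]_A=\Delta_A$ using $[\alpha,\theta]_A\subseteq\alpha\cap\theta$, while the paper instead stays entirely at the level of radicals (using $\rho_A(\alpha\cap\theta)=\rho_A([\alpha,\theta]_A)$ and semiprimeness); these are interchangeable.
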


\begin{proof} Let $\alpha ,\theta \in {\rm Con}(A)$ such that $[\alpha ,\alpha ]_A\subseteq \theta ^{\perp }$. Then, by the above and the fact that $A$ is semiprime, $[[\alpha ,\alpha ]_A,\theta ]_A=\Delta _A$, which is equivalent to $\rho _A([[\alpha ,\alpha ]_A,\theta ]_A)=\Delta _A$, that is $\rho _A(\alpha \cap \theta )=\Delta _A$, that is $\rho _A([\alpha ,\theta ]_A)=\Delta _A$, which means that $[\alpha ,\theta ]_A=\Delta _A$, which in turn is equivalent to $\alpha \subseteq \theta ^{\perp }$. Hence $\theta ^{\perp }$ is a semiprime and thus a radical congruence of $A$.\end{proof}

Since $\displaystyle \theta =\bigvee _{(a,b)\in \theta }Cg_A(a,b)=\bigvee \{\zeta \in {\rm PCon}(A)\ |\ \zeta \subseteq \theta \}=\bigvee \{\zeta \in {\cal K}(A)\ |\ \zeta \subseteq \theta \}$, it follows that:$$\beta \rightarrow \gamma =\bigvee \{\zeta \in {\cal K}(A)\ |\ [\zeta ,\beta ]_A\subseteq \gamma \}=\bigvee \{\zeta \in {\rm PCon}(A)\ |\ [\zeta ,\beta ]_A\subseteq \gamma \},$$$$\mbox{in particular }\beta ^{\perp }=\bigvee \{\zeta \in {\cal K}(A)\ |\ [\zeta ,\beta ]_A=\Delta _A\}=\bigvee \{\zeta \in {\rm PCon}(A)\ |\ [\zeta ,\beta ]_A=\Delta _A\}.$$Let us note that, for all $a,b\in A$, we have $(a,b)\in \beta ^{\perp }$ iff $Cg_A(a,b)\subseteq \beta ^{\perp }$ iff $[Cg_A(a,b),\beta ]_A=\Delta _A$, hence $\beta ^{\perp }=\{(a,b)\in A^2\ |\ [Cg_A(a,b),\beta ]_A=\Delta _A\}$.

For any $X\subseteq A^2$, we denote by $X^{\perp }=\{(a,b)\in A^2\ |\ (\forall \, (x,y)\in X)\, ([Cg_A(a,b),Cg_A(x,y)]_A=\Delta _A)\}$, so that:$$\displaystyle X^{\perp }=\{(a,b)\in A^2\ |\ [Cg_A(a,b),\bigvee _{(x,y)\in X}Cg_A(x,y)]_A=\Delta _A\}=\{(a,b)\in A^2\ |\ [Cg_A(a,b),Cg_A(X)]_A=\Delta _A\}=$$$$\bigvee \{Cg_A(a,b)\ |\ (a,b)\in A^2,[Cg_A(a,b),Cg_A(X)]_A=\Delta _A\}=\bigvee \{\alpha \in {\rm Con}(A)\ |\ [\alpha ,Cg_A(X)]_A=\Delta _A\}=$$$$\max \{\alpha \in {\rm Con}(A)\ |\ [\alpha ,Cg_A(X)]_A=\Delta _A\}=Cg_A(X)^{\perp },$$so this more general notation is consistent with the notation above for the particular case when $X\in {\rm Con}(A)$.

\begin{lemma} For any $\alpha ,\beta ,\theta \in {\rm Con}(A)$:\begin{enumerate}
\item\label{implic1} $\beta \subseteq \alpha \rightarrow \beta $;
\item\label{implic2} $(\alpha \vee \theta )\rightarrow (\beta \vee \theta )=\alpha \rightarrow (\beta \vee \theta )$.
\end{enumerate}\label{implic}\end{lemma}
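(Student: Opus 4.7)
The plan is to derive both parts directly from the residuation characterization $\alpha \rightarrow \beta = \max\{\gamma \in {\rm Con}(A) \mid [\gamma,\alpha]_A \subseteq \beta\}$ established just before the lemma, together with two basic properties of the term condition commutator recalled in the preliminaries: the commutator is bounded above by the intersection (hence by either argument), and, under the global assumption of this portion of Section~\ref{rescglat}, it is distributive with respect to arbitrary joins.

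For (i), I would simply observe that $[\beta,\alpha]_A \subseteq \beta \cap \alpha \subseteq \beta$, so $\beta$ belongs to the family $\{\gamma \in {\rm Con}(A) \mid [\gamma,\alpha]_A \subseteq \beta\}$, which is bounded above by $\alpha \rightarrow \beta$; hence $\beta \subseteq \alpha \rightarrow \beta$.

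For (ii), I would prove the two inclusions separately. The inclusion $(\alpha \vee \theta) \rightarrow (\beta \vee \theta) \subseteq \alpha \rightarrow (\beta \vee \theta)$ uses monotonicity of the commutator in the second argument: setting $\gamma = (\alpha \vee \theta) \rightarrow (\beta \vee \theta)$, one has $[\gamma,\alpha]_A \subseteq [\gamma,\alpha \vee \theta]_A \subseteq \beta \vee \theta$, and residuation then places $\gamma$ inside $\alpha \rightarrow (\beta \vee \theta)$. For the reverse inclusion, the distributivity of the commutator over joins is the key ingredient: if $\gamma \subseteq \alpha \rightarrow (\beta \vee \theta)$, i.e.\ $[\gamma,\alpha]_A \subseteq \beta \vee \theta$, then $[\gamma,\alpha \vee \theta]_A = [\gamma,\alpha]_A \vee [\gamma,\theta]_A$; the first summand lies in $\beta \vee \theta$ by hypothesis, and the second lies in $\theta \subseteq \beta \vee \theta$ because $[\gamma,\theta]_A \subseteq \gamma \cap \theta \subseteq \theta$. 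Residuation then gives $\gamma \subseteq (\alpha \vee \theta) \rightarrow (\beta \vee \theta)$.

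I do not foresee any significant obstacle, as both parts reduce to the two standing properties of the commutator already recalled. The only minor point to watch is that distributivity must be used over a binary join (not merely a pairwise property derived from monotonicity), which is supplied by the global assumption at the start of this subsection.
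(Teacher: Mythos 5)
Your proof is correct and takes essentially the same approach as the paper's: part (i) is verbatim the observation $[\beta,\alpha]_A\subseteq\beta\cap\alpha\subseteq\beta$, and part (ii) relies on the same two ingredients, namely distributivity of the commutator over the join and $[\gamma,\theta]_A\subseteq\theta\subseteq\beta\vee\theta$. The only difference is presentational: the paper packages (ii) as one chain of biconditionals in $\gamma$ and then specializes $\gamma$ to each side, while you split it into two one-directional inclusions, but the underlying argument is identical.
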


\begin{proof} (\ref{implic1}) $[\beta ,\alpha ]_A\subseteq \beta \cap \alpha \subseteq \beta $, thus $\beta \subseteq \max \{\zeta \in {\cal K}(A)\ |\ [\zeta ,\alpha ]_A\subseteq \beta \}=\alpha \rightarrow \beta $.

\noindent (\ref{implic2}) For all $\gamma \in {\rm Con}(A)$, we have, since $[\gamma ,\theta ]_A\subseteq \theta \subseteq \beta \vee \theta $: $\gamma \subseteq (\alpha \vee \theta )\rightarrow (\beta \vee \theta )$ iff $[\gamma ,\alpha \vee \theta ]_A\subseteq \beta \vee \theta $ iff $[\gamma ,\alpha ]_A\vee [\gamma ,\theta ]_A\subseteq \beta \vee \theta $ iff $[\gamma ,\alpha ]_A\subseteq \beta \vee \theta $ iff $\gamma \subseteq \alpha \rightarrow (\beta \vee \theta )$. By taking $\gamma =(\alpha \vee \theta )\rightarrow (\beta \vee \theta )$ and then $\gamma =\alpha \rightarrow (\beta \vee \theta )$ in the previous equivalences, we get: $\alpha \rightarrow (\beta \vee \theta )=(\alpha \vee \theta )\rightarrow (\beta \vee \theta )$.\end{proof}

\begin{proposition} For any $\alpha ,\beta ,\theta \in {\rm Con}(A)$:\begin{enumerate}
\item\label{impliquo1} $(\alpha \vee \theta )/\theta \rightarrow (\beta \vee \theta )/\theta =((\alpha \vee \theta )\rightarrow (\beta \vee \theta ))/\theta =(\alpha \rightarrow (\beta \vee \theta ))/\theta $;
\item\label{impliquo2} $((\alpha \vee \theta )/\theta )^{\perp }=(\alpha \rightarrow \theta )/\theta $.\end{enumerate}\label{impliquo}\end{proposition}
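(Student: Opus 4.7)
The plan is to exploit the lattice isomorphism $\sigma\mapsto \sigma/\theta$ between $[\theta)\subseteq \mathrm{Con}(A)$ and $\mathrm{Con}(A/\theta)$, combined with the commutator quotient identity $[\sigma/\theta,\tau/\theta]_{A/\theta}=([\sigma,\tau]_A\vee \theta)/\theta$ for $\sigma,\tau\in [\theta)$ recalled in the preliminaries, and Lemma \ref{implic}. Part (\ref{impliquo2}) will then drop out of Part (\ref{impliquo1}) by taking $\beta=\Delta_A$, since $\Delta_{A/\theta}=\theta/\theta=(\Delta_A\vee \theta)/\theta$.

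For Part (\ref{impliquo1}), I would first observe, using Lemma \ref{implic}.(\ref{implic1}), that $\alpha\rightarrow(\beta\vee \theta)\supseteq \beta\vee\theta\supseteq \theta$ and similarly $(\alpha\vee\theta)\rightarrow (\beta\vee\theta)\in[\theta)$, so both expressions inside the outer $/\theta$ legitimately live in $[\theta)$ and project to well-defined elements of $\mathrm{Con}(A/\theta)$. The second equality in (\ref{impliquo1}) is then immediate from Lemma \ref{implic}.(\ref{implic2}). For the first equality, I would pick an arbitrary $\gamma\in \mathrm{Con}(A/\theta)$, write $\gamma=\sigma/\theta$ with $\sigma\in [\theta)$ (so $\sigma\vee\theta=\sigma$), and chain the equivalences
\begin{align*}
\gamma\subseteq (\alpha\vee\theta)/\theta\rightarrow (\beta\vee\theta)/\theta
&\iff [\gamma,(\alpha\vee\theta)/\theta]_{A/\theta}\subseteq (\beta\vee\theta)/\theta\\
&\iff ([\sigma,\alpha]_A\vee \theta)/\theta \subseteq (\beta\vee\theta)/\theta\\
&\iff [\sigma,\alpha]_A\vee\theta\subseteq \beta\vee\theta\\
&\iff [\sigma,\alpha]_A\subseteq \beta\vee\theta\\
&\iff \sigma\subseteq \alpha\rightarrow(\beta\vee\theta)\\
&\iff \gamma\subseteq (\alpha\rightarrow(\beta\vee\theta))/\theta,
\end{align*}
where the second equivalence uses the commutator quotient formula applied to $\sigma$ and $\alpha\vee\theta$, and the third uses the $[\theta)$-to-$\mathrm{Con}(A/\theta)$ isomorphism. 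Testing $\gamma$ against both sides of the desired equality yields it.

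For Part (\ref{impliquo2}), I would specialize Part (\ref{impliquo1}) to $\beta=\Delta_A$: the left-hand side is by definition $(\alpha\vee\theta)/\theta \rightarrow \Delta_{A/\theta} = (\alpha\vee\theta)/\theta \rightarrow (\Delta_A\vee\theta)/\theta$, which Part (\ref{impliquo1}) identifies with $(\alpha\rightarrow(\Delta_A\vee\theta))/\theta=(\alpha\rightarrow \theta)/\theta$.

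The only potentially delicate point is the commutator quotient identity $[\sigma/\theta,\tau/\theta]_{A/\theta}=([\sigma,\tau]_A\vee\theta)/\theta$, which is used crucially in the second equivalence above; however this is exactly what is cited in the preliminaries (and restated in block \ref{totcommn} for powers of the commutator) under our standing hypotheses, so it is invoked rather than re-proved. No new algebraic identities for the commutator are required beyond distributivity over joins, which is already in force.
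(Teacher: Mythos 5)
Your proof is correct and follows essentially the same route as the paper's: both use the residuation equivalence, the lattice isomorphism $[\theta)\cong{\rm Con}(A/\theta)$, and the commutator quotient identity to chain equivalences for an arbitrary $\gamma\in{\rm Con}(A/\theta)$, and both derive part~(\ref{impliquo2}) by setting $\beta=\Delta_A$. The only cosmetic difference is that your chain lands directly on $\alpha\rightarrow(\beta\vee\theta)$ (proving the outer equality first, then inserting the middle term via Lemma~\ref{implic}.(\ref{implic2})), while the paper's lands on $(\alpha\vee\theta)\rightarrow(\beta\vee\theta)$ and then applies Lemma~\ref{implic}.(\ref{implic2}) for the last equality.
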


\begin{proof} By Lemma \ref{implic}, (\ref{implic1}), $\alpha \rightarrow (\beta \vee \theta )\supseteq \beta \vee \theta \supseteq \theta $ and $(\alpha \vee \theta )\rightarrow (\beta \vee \theta )\supseteq \beta \vee \theta \supseteq \theta $.

\noindent (\ref{impliquo1}) For any $\gamma \in [\theta )$, we have: $\gamma /\theta \subseteq (\alpha \vee \theta )/\theta \rightarrow (\beta \vee \theta )/\theta $ iff $[\gamma /\theta ,(\alpha \vee \theta )/\theta ]_{A/\theta }\subseteq (\beta \vee \theta )/\theta $ iff $([\gamma ,\alpha \vee \theta ]_A\vee \theta )/\theta \subseteq (\beta \vee \theta )/\theta $ iff $[\gamma ,\alpha \vee \theta ]_A\vee \theta \subseteq \beta \vee \theta $ iff $[\gamma ,\alpha \vee \theta ]_A\subseteq \beta \vee \theta $ iff $\gamma \subseteq (\alpha \vee \theta )\rightarrow (\beta \vee \theta )$ iff $\gamma /\theta \subseteq ((\alpha \vee \theta )\rightarrow (\beta \vee \theta ))/\theta $. Since ${\rm Con}(A/\theta )=\{\zeta /\theta \ |\ \zeta \in [\theta )\}$, by taking $\gamma /\theta =(\alpha \vee \theta )/\theta \rightarrow (\beta \vee \theta )/\theta $ and then $\gamma /\theta =(\alpha \rightarrow \beta )/\theta $ in the equivalences above we obtain the first equality in the enunciation; the second follows from Lemma \ref{implic}, (\ref{implic2}).

\noindent (\ref{impliquo2}) Take $\beta =\Delta _A$ in (\ref{impliquo1}).\end{proof}

\begin{lemma} Let $\alpha ,\beta \in {\rm Con}(A)$. Then:\begin{enumerate}
\item\label{proprperp0} $\Delta _A^{\perp }=\nabla _A$ and, if $[\theta ,\nabla _A]_A=\theta $ for all $\theta \in {\rm Con}(A)$, in particular if $\V $ is congruence--modular and semi--degenerate, then $\nabla _A^{\perp }=\Delta _A$;
\item\label{proprperp1} $\alpha \subseteq \beta $ implies $\beta ^{\perp }\subseteq \alpha ^{\perp }$, and: $\beta ^{\perp }\subseteq \alpha ^{\perp }$ iff $\alpha ^{\perp \perp }\subseteq \beta ^{\perp \perp }$, in particular $\alpha ^{\perp }=\beta ^{\perp }$ iff $\alpha ^{\perp \perp }=\beta ^{\perp \perp }$;
\item\label{proprperp2} $\alpha \subseteq \alpha ^{\perp \perp }$ and $\alpha ^{\perp \perp \perp }=\alpha ^{\perp }$;
\item\label{proprperp3} $(\alpha \vee \beta )^{\perp }=\alpha ^{\perp }\cap \beta ^{\perp }=(\alpha ^{\perp }\cap \beta ^{\perp })^{\perp \perp }$;
\item\label{proprperp4} if $A$ is semiprime, then $[\alpha ,\beta ]_A^{\perp }=(\alpha \cap \beta )^{\perp }$ and $(\alpha \cap \beta )^{\perp \perp }=\alpha ^{\perp \perp }\cap \beta ^{\perp \perp }$;
\item\label{proprperp5} if $A$ is semiprime, then: $\alpha ^{\perp }\subseteq \beta ^{\perp }$ iff $[\alpha ,\beta ]_A^{\perp }=\beta ^{\perp }$;
\item\label{proprperp6} if $A$ is semiprime, then: $\alpha \subseteq \alpha ^{\perp }$ iff $\alpha =\Delta _A$.

\end{enumerate}\label{proprperp}\end{lemma}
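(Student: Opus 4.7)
My plan is to use throughout the residuation identity $[\alpha,\beta]_A\subseteq\gamma\Leftrightarrow\alpha\subseteq\beta\rightarrow\gamma$ (in particular $[\alpha,\beta]_A=\Delta_A\Leftrightarrow\alpha\subseteq\beta^\perp$), together with the commutativity of the commutator and its distributivity over arbitrary joins; for the items involving the semiprime hypothesis I will additionally rely on the equivalence $[\mu,\nu]_A=\Delta_A\Leftrightarrow\mu\cap\nu=\Delta_A$, valid in a semiprime algebra (since $\rho_A([\mu,\nu]_A)=\rho_A(\mu\cap\nu)$ and $\rho_A(\theta)=\Delta_A\Leftrightarrow\theta=\Delta_A$), so in particular $\gamma\subseteq\theta^\perp$ will depend on $\theta$ only through $\rho_A(\theta)$.

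Items (\ref{proprperp0})--(\ref{proprperp3}) and (\ref{proprperp6}) are essentially formal. For (\ref{proprperp0}): $[\alpha,\Delta_A]_A\subseteq\Delta_A$ yields $\Delta_A^\perp=\nabla_A$, while under $[\theta,\nabla_A]_A=\theta$ the equation $[\alpha,\nabla_A]_A=\Delta_A$ collapses to $\alpha=\Delta_A$. Monotonicity of the commutator applied to $[\beta^\perp,\beta]_A=\Delta_A$ gives the antitone step in (\ref{proprperp1}); iterating it together with the triple--perp identity of (\ref{proprperp2}), itself a consequence of $[\alpha^\perp,\alpha]_A=\Delta_A$ applied to $\alpha$ and then to $\alpha^\perp$, delivers the stated equivalence. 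Part (\ref{proprperp3}) reads off $[\gamma,\alpha\vee\beta]_A=[\gamma,\alpha]_A\vee[\gamma,\beta]_A$ for the first equality, with the second following from the triple--perp rule. Part (\ref{proprperp6}) rewrites $\alpha\subseteq\alpha^\perp$ as $[\alpha,\alpha]_A=\Delta_A$, which in a semiprime algebra forces $\alpha=\Delta_A$.

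The first identity of (\ref{proprperp4}) is an immediate consequence of the observation above combined with $\rho_A([\alpha,\beta]_A)=\rho_A(\alpha\cap\beta)$. Item (\ref{proprperp5}) then splits as follows: since $\alpha\cap\beta\subseteq\alpha$ gives $\alpha^\perp\subseteq(\alpha\cap\beta)^\perp=[\alpha,\beta]_A^\perp$, the ``if'' direction follows from the hypothesis $[\alpha,\beta]_A^\perp=\beta^\perp$. For ``only if'', granted $\alpha^\perp\subseteq\beta^\perp$, the inclusion $\beta^\perp\subseteq[\alpha,\beta]_A^\perp$ is automatic from $[\alpha,\beta]_A\subseteq\beta$; conversely, if $\gamma\subseteq[\alpha,\beta]_A^\perp=(\alpha\cap\beta)^\perp$, i.e.\ $\gamma\cap\alpha\cap\beta=\Delta_A$, then $[\gamma\cap\beta,\alpha]_A\subseteq\gamma\cap\alpha\cap\beta=\Delta_A$ yields $\gamma\cap\beta\subseteq\alpha^\perp\subseteq\beta^\perp$, whence $\gamma\cap\beta\subseteq\beta\cap\beta^\perp=\Delta_A$ by semiprimeness, so $\gamma\subseteq\beta^\perp$.

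The main obstacle is the second identity of (\ref{proprperp4}), $(\alpha\cap\beta)^{\perp\perp}=\alpha^{\perp\perp}\cap\beta^{\perp\perp}$. The inclusion $\subseteq$ is two applications of (\ref{proprperp1}). For $\supseteq$, set $\zeta=\alpha^{\perp\perp}\cap\beta^{\perp\perp}$ and $\delta=(\alpha\cap\beta)^\perp$; since semiprimeness converts $\zeta\cap\delta=\Delta_A$ into $[\zeta,\delta]_A=\Delta_A$, i.e.\ $\zeta\subseteq\delta^\perp=(\alpha\cap\beta)^{\perp\perp}$, it suffices to establish $\zeta\cap\delta=\Delta_A$. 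From $[\delta,\alpha\cap\beta]_A=\Delta_A$ semiprimeness gives $\delta\cap\alpha\cap\beta=\Delta_A$, hence $[(\zeta\cap\delta)\cap\alpha,\beta]_A\subseteq\delta\cap\alpha\cap\beta=\Delta_A$, so $(\zeta\cap\delta)\cap\alpha\subseteq\beta^\perp$; combined with $(\zeta\cap\delta)\cap\alpha\subseteq\zeta\subseteq\beta^{\perp\perp}$ and $\beta^\perp\cap\beta^{\perp\perp}=\Delta_A$ (read off $[\beta^\perp,\beta^{\perp\perp}]_A=\Delta_A$ via semiprimeness), this forces $(\zeta\cap\delta)\cap\alpha=\Delta_A$. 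Then $[\zeta\cap\delta,\alpha]_A\subseteq(\zeta\cap\delta)\cap\alpha=\Delta_A$ gives $\zeta\cap\delta\subseteq\alpha^\perp$, while $\zeta\cap\delta\subseteq\zeta\subseteq\alpha^{\perp\perp}$ and $\alpha^\perp\cap\alpha^{\perp\perp}=\Delta_A$ finally yield $\zeta\cap\delta=\Delta_A$, completing the proof.
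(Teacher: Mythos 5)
Your proof is correct and follows essentially the same route as the paper's, resting on the residuation identity, antitonicity, and the triple--perp rule, with the same Cz-style chase for the hard inclusion $(\alpha\cap\beta)^{\perp\perp}\supseteq\alpha^{\perp\perp}\cap\beta^{\perp\perp}$ in (\ref{proprperp4}). The only noteworthy variation is (\ref{proprperp5}), where the paper derives the equivalence cleanly from (\ref{proprperp4}) together with (\ref{proprperp1})--(\ref{proprperp2}) via biperps, whereas you give a slightly longer but equally valid direct chase.
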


\begin{proof}(\ref{proprperp0}) $\Delta _A^{\perp }=\max \{\theta \in {\rm Con}(A)\ |\ [\theta ,\Delta _A]=\Delta _A\}=\max ({\rm Con}(A))=\nabla _A$. If $[\theta ,\nabla _A]_A=\theta $ for all $\theta \in {\rm Con}(A)$, then $\nabla _A^{\perp }=\max \{\theta \in {\rm Con}(A)\ |\ [\theta ,\nabla _A]=\Delta _A\}=\max \{\theta \in {\rm Con}(A)\ |\ \theta =\Delta _A\} =\max \{\Delta _A\}=\Delta _A$.

\noindent (\ref{proprperp1}),(\ref{proprperp2}) If $\alpha \subseteq \beta $, then $\{\theta \in {\rm Con}(A)\ |\ [\alpha ,\theta ]_A=\Delta _A\}\supseteq \{\theta \in {\rm Con}(A)\ |\ [\beta ,\theta ]_A=\Delta _A\}$, hence $\beta ^{\perp }\subseteq \alpha ^{\perp }$, which thus, in turn, implies $\alpha ^{\perp \perp }\subseteq \beta ^{\perp \perp }$.

Since $[\alpha ,\alpha ^{\perp }]_A=\Delta _A$, it follows that $\alpha \subseteq \alpha ^{\perp \perp }$, hence $\alpha ^{\perp }\subseteq \alpha ^{\perp \perp \perp }$ if we replace $\alpha $ by $\alpha ^{\perp }$ in this inclusion, but also $\alpha ^{\perp \perp \perp }\subseteq \alpha ^{\perp }$ by the above, therefore $\alpha ^{\perp }=\alpha ^{\perp \perp \perp }$.

Hence $\alpha ^{\perp \perp }\subseteq \beta ^{\perp \perp }$ implies $\beta ^{\perp }=\beta ^{\perp \perp \perp }\subseteq \alpha ^{\perp \perp \perp }=\alpha ^{\perp }$.

\noindent (\ref{proprperp3}) For any $\theta \in {\rm Con}(A)$, we have: $[\theta ,\alpha ]_A=[\theta ,\beta ]_A=\Delta _A$ iff $[\theta ,\alpha \vee \beta ]_A=\Delta _A$, hence: $\theta \subseteq \alpha ^{\perp }\cap \beta ^{\perp }$ iff $\theta \subseteq (\alpha \vee \beta )^{\perp }$, thus: $\alpha ^{\perp }\cap \beta ^{\perp }=(\alpha \vee \beta )^{\perp }$. By (\ref{proprperp2}), $(\alpha \vee \beta )^{\perp }=(\alpha \vee \beta )^{\perp \perp \perp }=(\alpha ^{\perp }\cap \beta ^{\perp })^{\perp \perp }$.

\noindent (\ref{proprperp4}) If $A$ is semiprime, then, for any $\theta ,\zeta \in {\rm Con}(A)$, we have: $\theta \subseteq \zeta ^{\perp }$ iff $[\theta ,\zeta ]_A=\Delta _A$ iff $\theta \cap \zeta =\Delta _A$.

Hence, for any $\theta \in {\rm Con}(A)$: $\theta \subseteq [\alpha ,\beta ]_A^{\perp }$ iff $[\theta ,[\alpha ,\beta ]_A]_A=\Delta _A$ iff $\theta \cap \alpha \cap \beta =\Delta _A$ iff $[\theta ,\alpha \cap \beta ]_A=\Delta _A$ iff $\theta \subseteq (\alpha \cap \beta )^{\perp }$. By taking $\theta =[\alpha ,\beta ]_A^{\perp }$ and then $\theta =(\alpha \cap \beta )^{\perp }$ in the previous equivalences, we obtain: $[\alpha ,\beta ]_A^{\perp }=(\alpha \cap \beta )^{\perp }$.

If we denote by $\gamma =\alpha ^{\perp \perp }\cap \beta ^{\perp \perp }$ and by $\delta =(\alpha \cap \beta )^{\perp }=[\alpha ,\beta ]_A^{\perp }$ by the above, then:

$\gamma \subseteq \alpha ^{\perp \perp }$ and $\gamma \subseteq \beta ^{\perp \perp }$, thus $[\gamma ,\alpha ^{\perp }]_A=\Delta _A$ and $[\gamma ,\beta ^{\perp }]_A=\Delta _A$;

$[\delta ,\alpha \cap \beta ]_A=\Delta _A$, so $\delta \cap \alpha \cap \beta =\Delta _A$, thus $[\alpha \cap \delta ,\beta ]_A=\Delta _A$, hence $\alpha \cap \delta \subseteq \beta ^{\perp }$;

therefore $[\gamma ,\alpha \cap \delta ]_A=\Delta _A$, so $\gamma \cap \alpha \cap \delta =\Delta _A$, thus $[\gamma \cap \delta ,\alpha ]_A=\Delta _A$, so $\gamma \cap \delta \subseteq \alpha ^{\perp }$;

hence $[\gamma ,\gamma \cap \delta ]_A=\Delta _A$, so $\gamma \cap \delta =\gamma \cap \gamma \cap \delta =\Delta _A$, thus $[\gamma ,\delta ]_A=\Delta _A$, hence $\alpha ^{\perp \perp }\cap \beta ^{\perp \perp }=\gamma \subseteq \delta ^{\perp }=[\alpha ,\beta ]_A^{\perp \perp }=(\alpha \cap \beta )^{\perp \perp }$ by the above.

But $(\alpha \cap \beta )^{\perp \perp }\subseteq \alpha ^{\perp \perp }\cap \beta ^{\perp \perp }$ by (\ref{proprperp1}). Therefore $(\alpha \cap \beta )^{\perp \perp }=\alpha ^{\perp \perp }\cap \beta ^{\perp \perp }$.

\noindent (\ref{proprperp5}) By (\ref{proprperp4}), $[\alpha ,\beta ]_A^{\perp \perp }=(\alpha \cap \beta )^{\perp \perp }=\alpha ^{\perp \perp }\cap \beta ^{\perp \perp }$, thus, according to (\ref{proprperp1}) and (\ref{proprperp2}): $\alpha ^{\perp }\subseteq \beta ^{\perp }$ iff $\beta ^{\perp \perp }\subseteq \alpha ^{\perp \perp }$ iff $\alpha ^{\perp \perp }\cap \beta ^{\perp \perp }=\beta ^{\perp \perp }$ iff $[\alpha ,\beta ]_A^{\perp \perp }=\beta ^{\perp \perp }$ iff $[\alpha ,\beta ]_A^{\perp }=\beta ^{\perp }$.

\noindent (\ref{proprperp6}) If $A$ is semiprime, then: $\alpha \subseteq \alpha ^{\perp }$ iff $[\alpha ,\alpha ]_A=\Delta _A$ iff $\alpha \cap \alpha =\Delta _A$ iff $\alpha =\Delta _A$.\end{proof}

\begin{lemma}{\rm \cite[Proposition $4$, $(1)$]{agl}, \cite[Proposition $18$, Corollary $2$]{retic}} For any $\theta \in {\rm Con}(A)$:\begin{itemize}
\item $\rho _A(\theta )=\max \{\alpha \in {\rm Con}(A)\ |\ (\exists \, n\in \N ^*)\, ([\alpha ,\alpha ]_A^n\subseteq \theta )\}=\bigvee \{\alpha \in {\rm Con}(A)\ |\ (\exists \, n\in \N ^*)\, ([\alpha ,\alpha ]_A^n\subseteq \theta )\}=\bigvee \{\alpha \in {\cal K}(A)\ |\ (\exists \, n\in \N ^*)\, ([\alpha ,\alpha ]_A^n\subseteq \theta )\}=\bigvee \{\alpha \in {\rm PCon}(A)\ |\ (\exists \, n\in \N ^*)\, ([\alpha ,\alpha ]_A^n\subseteq \theta )\}=\{(a,b)\in A^2\ |\ (\exists \, n\in \N ^*)\, ([Cg_A(a,b),Cg_A(a,b)]_A^n\subseteq \theta )\}$;
\item for any $\alpha \in {\rm Con}(A)$, $\alpha \subseteq \rho _A(\theta )$ iff there exists an $n\in \N ^*$ such that $[\alpha ,\alpha ]_A^n\subseteq \theta $;
\item $A$ is semiprime iff, for any $\alpha \in {\rm PCon}(A)$ and any $n\in \N ^*$, $[\alpha ,\alpha ]_A^n=\Delta _A$ implies $\alpha =\Delta _A$.\end{itemize}\label{charsemiprime}\end{lemma}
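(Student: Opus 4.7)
The plan is to establish the pair-level characterization
$$\rho_A(\theta) = P := \{(a,b) \in A^2 : \exists\, n \in \N^*,\ [Cg_A(a,b), Cg_A(a,b)]_A^n \subseteq \theta\}$$
first, and then to deduce the join descriptions, the general-$\alpha$ iff, and the semiprime equivalence as corollaries.

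The inclusion $P \subseteq \rho_A(\theta)$ is the easier half. Given $(a,b) \in P$ with witness $n$ and any prime $\phi \in V_A(\theta)$, I peel off the iterated commutator $[Cg_A(a,b), Cg_A(a,b)]_A^n \subseteq \phi$ by repeatedly invoking the defining implication of a prime congruence (which, by Lemma \ref{charspec} specialised to $\alpha = \beta = \gamma$, reads $[\gamma,\gamma]_A \subseteq \phi \Rightarrow \gamma \subseteq \phi$), finally reaching $Cg_A(a,b) \subseteq \phi$; intersecting over $\phi$ gives $(a,b) \in \rho_A(\theta)$.

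For the reverse inclusion I argue by contrapositive. Assume $(a,b) \notin P$, and set
$$S = \{(x,y) \in A^2 : \forall\, n \in \N^*,\ [Cg_A(x,y), Cg_A(x,y)]_A^n \not\subseteq \theta\}.$$
Then $(a,b) \in S$ by assumption, and $S \cap \theta = \emptyset$ since $(x,y) \in \theta$ immediately forces $[Cg_A(x,y), Cg_A(x,y)]_A \subseteq Cg_A(x,y) \subseteq \theta$. If $S$ is an $m$-system, Lemma \ref{msistspec} produces a prime $\phi \supseteq \theta$ disjoint from $S$, whence $(a,b) \notin \phi$ and $(a,b) \notin \rho_A(\theta)$. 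The main obstacle is verifying the $m$-system property: given $(c,d), (e,f) \in S$, I must exhibit $(p,q) \in [Cg_A(c,d), Cg_A(e,f)]_A \cap S$. The intended move is to suppose no such $(p,q)$ exists, write $\mu = [Cg_A(c,d), Cg_A(e,f)]_A$ as the join of its principal sub-congruences $Cg_A(p,q)$, and use the distributivity of the commutator over arbitrary joins together with the containment $\mu \subseteq [\gamma,\gamma]_A$ for $\gamma = Cg_A(c,d) \vee Cg_A(e,f)$ to consolidate the individual witnesses $n_{p,q}$ (satisfying $[Cg_A(p,q), Cg_A(p,q)]_A^{n_{p,q}} \subseteq \theta$) into a single iterate $[\gamma,\gamma]_A^N \subseteq \theta$; since $Cg_A(c,d) \subseteq \gamma$, this contradicts $(c,d) \in S$.

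Once $\rho_A(\theta) = P$ is proved, the equalities with the joins over ${\rm PCon}(A)$, ${\cal K}(A)$ and ${\rm Con}(A)$ follow from the sandwich $P \subseteq \bigvee\{\alpha \in {\rm PCon}(A) : \exists n,\ [\alpha,\alpha]_A^n \subseteq \theta\} \subseteq \bigvee\{\alpha \in {\cal K}(A) : \cdots\} \subseteq \bigvee\{\alpha \in {\rm Con}(A) : \cdots\} \subseteq \rho_A(\theta)$, where the last inclusion uses the easy direction applied to each $\alpha$ in the family. Promoting the join to a maximum, and thereby proving the general-$\alpha$ iff, reduces to showing that $\rho_A(\theta)$ itself admits some $N$ with $[\rho_A(\theta), \rho_A(\theta)]_A^N \subseteq \theta$; this is the same consolidation-of-witnesses argument applied to the principal sub-congruences of $\rho_A(\theta)$. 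The third bullet is then the case $\theta = \Delta_A$: $A$ is semiprime iff $\rho_A(\Delta_A) = \Delta_A$, and specialising the ${\rm PCon}(A)$-indexed join formula to $\theta = \Delta_A$ reads exactly as claimed.
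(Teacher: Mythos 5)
Your forward inclusion $P \subseteq \rho_A(\theta)$ is correct, but the backward inclusion breaks down at the $m$-system verification. Since $P$ is (by the forward inclusion and the desired conclusion) supposed to be the congruence $\rho_A(\theta)$, the claim that $S = \nabla_A\setminus P$ is an $m$-system is, by Lemma~\ref{charspec}, precisely the claim that $P$ is a prime congruence; but $\rho_A(\theta)$ is an intersection of all the primes over $\theta$ and is usually not prime. The failure is visible already in $\Z$ with $\theta$ the congruence modulo $6$: here $\rho_A(\theta)=\theta$, the pairs $(2,0)$ and $(3,0)$ lie in $S$, yet $[Cg_A(2,0),Cg_A(3,0)]_A$ is again the congruence modulo $6$ and so is disjoint from $S$. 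Your ``consolidation of witnesses'' is also internally inconsistent: the hypothesis $(c,d)\in S$ says precisely that $[Cg_A(c,d),Cg_A(c,d)]_A^N\not\subseteq\theta$ for every $N$, and since $[Cg_A(c,d),Cg_A(c,d)]_A \subseteq [\gamma,\gamma]_A$ for $\gamma=Cg_A(c,d)\vee Cg_A(e,f)$, no choice of $N$ can yield $[\gamma,\gamma]_A^N\subseteq\theta$. The witnesses $n_{p,q}$ only concern the pairs of $\mu=[Cg_A(c,d),Cg_A(e,f)]_A$ and say nothing about the other joinands $[Cg_A(c,d),Cg_A(c,d)]_A$ and $[Cg_A(e,f),Cg_A(e,f)]_A$ of $[\gamma,\gamma]_A$; and even restricted to $\mu$, distributivity of a single commutator over arbitrary joins does not allow one to collapse infinitely many unbounded exponents $n_{p,q}$ into one finite $N$.

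The final step of your proposal has the same gap: ``promoting the join to a maximum'' would require $[\rho_A(\theta),\rho_A(\theta)]_A^N\subseteq\theta$ for some $N$, which fails in general --- for instance, in $k[x_2,x_3,\ldots]/(x_2^2,x_3^3,\ldots)$ the radical of $\Delta_A$ is the nilradical, which is not nilpotent --- so the ``$\max$'' formula and the biconditional for arbitrary $\alpha\in{\rm Con}(A)$ only go through for compact (equivalently, principal) $\alpha$, where the consolidation really is over finitely many witnesses. To obtain the pair-level description for a fixed $(a,b)\notin P$, the correct move is to build an $m$-system of pairs tailored to $(a,b)$ out of the descending chain $[Cg_A(a,b),Cg_A(a,b)]_A^n$, $n\in\N$, rather than to try to show that the whole complement $\nabla_A\setminus P$ is an $m$-system.
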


\begin{proposition} If $[\theta ,\nabla _A]_A=\theta $ for all $\theta \in {\rm Con}(A)$, in particular if $\V $ is either congruence--distributive or both congruence--modular and semi--degenerate, then: $A/\theta ^{\perp }$ is semiprime for all $\theta \in {\rm Con}(A)$ iff $A$ is semiprime.\label{perpsemiprime}\end{proposition}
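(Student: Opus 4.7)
The proposition has two directions, and under the given hypothesis both should be nearly immediate from results already recorded in the excerpt, so the plan is quite short.

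For the implication ``$A$ semiprime $\Rightarrow A/\theta^{\perp}$ semiprime for all $\theta$'', I would simply invoke the fact, stated in the paragraph on $\rho_B$ and quotients earlier in this section, that if $A$ is semiprime then $A/\zeta$ is semiprime for every $\zeta\in\mathrm{Con}(A)$. Applying this with $\zeta=\theta^{\perp}$ gives the conclusion at once; note this direction does not even require the hypothesis $[\theta,\nabla_A]_A=\theta$.

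For the converse ``$A/\theta^{\perp}$ semiprime for all $\theta \Rightarrow A$ semiprime'', the idea is to specialize the hypothesis to a single well-chosen $\theta$, namely $\theta=\nabla_A$. By Lemma \ref{proprperp}(\ref{proprperp0}), the assumption $[\zeta,\nabla_A]_A=\zeta$ for all $\zeta$ forces $\nabla_A^{\perp}=\Delta_A$. Hence the hypothesis applied at $\theta=\nabla_A$ says that $A/\Delta_A$ is semiprime. Since $A/\Delta_A$ is canonically isomorphic to $A$ (via $p_{\Delta_A}$) and semiprimeness is preserved under isomorphism, we conclude that $A$ itself is semiprime.

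There is no real obstacle here; the only nontrivial content is recognizing that the hypothesis $[\zeta,\nabla_A]_A=\zeta$ is exactly what makes $\nabla_A^{\perp}=\Delta_A$, so that taking $\theta=\nabla_A$ actually produces the algebra $A$ (up to isomorphism) as the quotient $A/\theta^{\perp}$. Once this observation is in place, both implications are a one-line invocation of earlier material (the quotient-preserves-semiprime remark for one direction, and Lemma \ref{proprperp}(\ref{proprperp0}) for the other).
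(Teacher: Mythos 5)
Your proof is correct and matches the paper's argument exactly: the forward direction is the specialization of the fact that quotients of semiprime algebras are semiprime, and the converse takes $\theta=\nabla_A$, uses Lemma \ref{proprperp}(\ref{proprperp0}) to get $\nabla_A^{\perp}=\Delta_A$, and concludes via $A/\Delta_A\cong A$.
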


\begin{proof} By \cite[Proposition~$5.22$]{fctretic} and Lemma \ref{negrad}, if $A$ is semiprime, then $A/\zeta $ is semiprime for all $\zeta \in {\rm RCon}(A)$, in particular $A/\theta ^{\perp }$ is semiprime for all $\theta \in {\rm Con}(A)$.

Conversely, if $A/\theta ^{\perp }$ is semiprime for all $\theta \in {\rm Con}(A)$, then $A/\nabla _A^{\perp }$ is semiprime; but $\nabla _A^{\perp }=\Delta _A$ by Lemma \ref{proprperp}, (\ref{proprperp0}), and $A/\nabla _A^{\perp }=A/\Delta _A\cong A$, thus $A$ is semiprime.\end{proof}

See also \cite{retic} for the following properties. By \cite[Proposition $6.7$]{stcommlat}, if $[\theta ,\nabla _A]_A=\theta $ for all $\theta \in {\rm Con}(A)$, in particular if $\V $ is either congruence--distributive or both congruence--modular and semi--degenerate, then:\begin{itemize}
\item for any $\varepsilon \in {\cal B}({\rm Con}(A))$ and any $\alpha \in {\rm Con}(A)$, $[\varepsilon ,\alpha ]_A=\varepsilon \cap \alpha $;
\item ${\cal B}({\rm Con}(A))$ is a Boolean sublattice of ${\rm Con}(A)$ whose complementation is $\cdot ^{\perp }$ and in which, by the above, the commutator equals the intersection.\end{itemize}

By \cite[Proposition $6.11$]{stcommlat}, if $\nabla _A\in {\cal K}(A)$ and $[\theta ,\nabla _A]_A=\theta $ for all $\theta \in {\rm Con}(A)$, in particular if $\V $ is congruence--modular and semi--degenerate, then ${\cal B}({\rm Con}(A))\subseteq {\cal K}(A)$.

Let us also note that, if the commutator of $A$ equals the intersection, in particular if $\V $ is congruence--distributive, then ${\rm Con}(A)$ is a frame, hence ${\cal B}({\rm Con}(A))$ is a complete Boolean sublattice of ${\rm Con}(A)$.

Following \cite{bur}, we say that an algebra $A$ is {\em hyperarchimedean} iff, for all $\theta \in {\rm PCon}(A)$, there exists an $n\in \N ^*$ such that $[\theta ,\theta ]_A^n\in {\cal B}({\rm Con}(A))$.

By the above, if the commutator of $A$ equals the intersection, in particular if $\V $ is congruence--distributive, then $A$ is hyperarchimedean iff ${\rm PCon}(A)\subseteq {\cal B}({\rm Con}(A))$ iff ${\rm Con}(A)\subseteq {\cal B}({\rm Con}(A))$ iff ${\cal B}({\rm Con}(A))={\rm Con}(A)$; furthermore, if the commutator of $A$ equals the intersection and $\nabla _A\in {\cal K}(A)$, in particular if $\V $ is congruence--distributive and semi--degenerate, then $A$ is hyperarchimedean iff ${\cal B}({\rm Con}(A))={\cal K}(A)={\rm Con}(A)$. Thus the hyperarchimedean members of a congruence--distributive variety are those with Boolean lattices of congruences and, if the variety is also semi--degenerate, then all congruences of its hyperarchimedean members are compact.

Extending the terminology used for rings in \cite{stcommlat}, we call $A$ a {\em strongly Baer}, respectively {\em Baer algebra} iff, for all $\theta \in {\rm Con}(A)$, respectively  all $\theta \in {\rm PCon}(A)$, we have $\theta ^{\perp }\in {\cal B}({\rm Con}(A))$, iff the commutator lattice $({\rm Con}(A),[\cdot ,\cdot ]_A)$ is strongly Stone, respectively Stone.

\begin{lemma} If $[\theta ,\nabla _A]_A=\theta $ for all $\theta \in {\rm Con}(A)$, in particular if $\V $ is either congruence--distributive or both congruence--modular and semi--degenerate, then: $A$ is Baer iff, for all $\theta \in {\cal K}(A)$, we have $\theta ^{\perp }\in {\cal B}({\rm Con}(A))$.\label{charbaer}\end{lemma}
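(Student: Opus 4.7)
The plan is to prove both implications directly, exploiting that compact congruences are finite joins of principal congruences together with the Boolean sublattice structure of ${\cal B}({\rm Con}(A))$ under the standing hypothesis.

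The direction from right to left is immediate: every principal congruence is compact, so ${\rm PCon}(A)\subseteq {\cal K}(A)$; hence if $\theta ^{\perp }\in {\cal B}({\rm Con}(A))$ for every $\theta \in {\cal K}(A)$, this holds in particular for every $\theta \in {\rm PCon}(A)$, which is exactly the definition of $A$ being Baer.

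For the converse, I would assume $A$ is Baer and fix an arbitrary $\theta \in {\cal K}(A)={\rm Cp}({\rm Con}(A))$. Since a compact congruence of $A$ is finitely generated, one may write $\theta =\alpha _1\vee \cdots \vee \alpha _n$ with $\alpha _1,\ldots ,\alpha _n\in {\rm PCon}(A)$. Applying Lemma \ref{proprperp}, (\ref{proprperp3}) inductively yields $\theta ^{\perp }=\alpha _1^{\perp }\cap \cdots \cap \alpha _n^{\perp }$. Each $\alpha _i^{\perp }$ belongs to ${\cal B}({\rm Con}(A))$ by the Baer hypothesis, and under the standing assumption $[\theta ,\nabla _A]_A=\theta $ for all $\theta \in {\rm Con}(A)$, the excerpt has recalled from \cite[Proposition $6.7$]{stcommlat} that ${\cal B}({\rm Con}(A))$ is a Boolean \emph{sublattice} of ${\rm Con}(A)$, hence closed under finite meets. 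It follows that $\theta ^{\perp }\in {\cal B}({\rm Con}(A))$, as required.

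There is no substantive obstacle here; the proof is essentially bookkeeping built on previously established facts. The only point worth flagging is that the closure of ${\cal B}({\rm Con}(A))$ under binary (and hence finite) meets in ${\rm Con}(A)$ is precisely where the hypothesis $[\theta ,\nabla _A]_A=\theta $ enters: without it, the complemented elements would only form a subset of ${\rm Con}(A)$, not a sublattice, and the inductive step collapsing $\theta ^{\perp }$ to a finite meet of elements of ${\cal B}({\rm Con}(A))$ would no longer yield an element of ${\cal B}({\rm Con}(A))$.
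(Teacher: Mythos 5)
Your proof is correct and follows essentially the same route as the paper: decompose a compact $\theta$ as a finite join of principal congruences, apply Lemma \ref{proprperp} (\ref{proprperp3}) to turn $\theta^{\perp}$ into a finite meet of Baer-given complemented elements, and conclude by closure of ${\cal B}({\rm Con}(A))$ under finite meets. The only difference is that you make explicit the reliance on the Boolean-sublattice fact from \cite[Proposition 6.7]{stcommlat} (and correctly identify that as the place where the hypothesis $[\theta,\nabla_A]_A=\theta$ enters), a dependence the paper's one-line proof leaves implicit.
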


\begin{proof} The converse implication is trivial.

If $A$ is Baer and $\theta \in {\cal K}(A)$, so that $\displaystyle \theta =\bigvee _{i=1}^n\theta _i$ for some $n\in \N ^*$ and $\theta _1,\ldots ,\theta _n\in {\rm PCon}(A)$, then $\theta _1^{\perp },\ldots ,\theta _n^{\perp }\in {\cal B}({\rm Con}(A))$, so that, according to Lemma \ref{proprperp}, (\ref{proprperp3}), $\theta ^{\perp }=(\theta _1\vee \ldots \vee \theta _n)^{\perp }=\theta _1^{\perp }\cap \ldots \cap \theta _n^{\perp }\in {\cal B}({\rm Con}(A))$.\end{proof}

\begin{proposition} If $[\theta ,\nabla _A]_A=\theta $ for all $\theta \in {\rm Con}(A)$, in particular if $\V $ is either congruence--distributive or both congruence--modular and semi--degenerate, then:\begin{enumerate}
\item\label{baerhyp1} if $A$ is hyperarchimedean, then $A$ is strongly Baer;
\item\label{baerhyp2} if $A$ is strongly Baer, then $A$ is semiprime;
\item\label{baerhyp3} if $A$ is Baer and has principal commutators, then $A$ is semiprime.\end{enumerate}\label{baerhyp}\end{proposition}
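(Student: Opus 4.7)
The plan treats $(2)$, $(3)$, $(1)$ in order of increasing difficulty. For $(2)$, by Lemma \ref{charsemiprime} it suffices to show that $[\alpha ,\alpha ]_A^n=\Delta _A$ forces $\alpha =\Delta _A$ whenever $\alpha \in {\rm PCon}(A)$ and $n\in \N ^*$. I induct on $n$. For $n=1$, $[\alpha ,\alpha ]_A=\Delta _A$ gives $\alpha \subseteq \alpha ^{\perp }$ by the residuation; since $A$ is strongly Baer, $\alpha ^{\perp }\in {\cal B}({\rm Con}(A))$ has Boolean complement $\alpha ^{\perp \perp }$, so combining with $\alpha \subseteq \alpha ^{\perp \perp }$ from Lemma \ref{proprperp}(\ref{proprperp2}) yields $\alpha \subseteq \alpha ^{\perp }\cap \alpha ^{\perp \perp }=\Delta _A$. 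For $n\geq 2$, set $\beta :=[\alpha ,\alpha ]_A^{n-1}$; then $[\beta ,\beta ]_A=\Delta _A$, and the base-case argument applied to $\beta $---legitimate because strongly Baer covers every congruence, not just the principal ones---yields $\beta =\Delta _A$; iteration gives $\alpha =\Delta _A$.

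Part $(3)$ uses the same descent, but only the weaker Baer property is available, so the $\beta $ at each step must be principal. The principal commutators hypothesis ensures exactly this: ${\rm PCon}(A)$ is closed under the commutator, so iteration gives $[\alpha ,\alpha ]_A^k\in {\rm PCon}(A)$ for every $k\in \N ^*$, and Baer supplies the Boolean perp at every stage of the descent.

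For $(1)$, given $\theta \in {\rm Con}(A)$ I aim at $\theta ^{\perp }\in {\cal B}({\rm Con}(A))$, reducing first to principal $\zeta $ via $\theta ^{\perp }=\bigcap \{\zeta ^{\perp }\ |\ \zeta \in {\rm PCon}(A),\, \zeta \subseteq \theta \}$. For such $\zeta $, hyperarchimedean supplies $\varepsilon _{\zeta }:=[\zeta ,\zeta ]_A^n\in {\cal B}({\rm Con}(A))$ for some $n$, and the key claim is $\zeta ^{\perp }=\varepsilon _{\zeta }^{\perp }$. The inclusion $\zeta ^{\perp }\subseteq \varepsilon _{\zeta }^{\perp }$ is immediate from $\varepsilon _{\zeta }\subseteq \zeta $ and Lemma \ref{proprperp}(\ref{proprperp1}). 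For the reverse, set $\mu :=\varepsilon _{\zeta }^{\perp }\cap \zeta $; the identity $[\varepsilon _{\zeta },\alpha ]_A=\varepsilon _{\zeta }\cap \alpha $ for Boolean $\varepsilon _{\zeta }$, combined with induction on $k$, yields $[\zeta ,\zeta ]_A^k=\varepsilon _{\zeta }\vee [\mu ,\mu ]_A^k$, so at $k=n$ one forces $[\mu ,\mu ]_A^n\subseteq \varepsilon _{\zeta }\cap \varepsilon _{\zeta }^{\perp }=\Delta _A$. The main obstacle is concluding $\mu =\Delta _A$: by Lemma \ref{charsemiprime} this follows provided $A$ is semiprime, so the semiprimeness of a hyperarchimedean $A$ under the standing assumption $[\theta ,\nabla _A]_A=\theta $ must be established as a preliminary step, by descending through the commutator sequence of a nilpotent principal congruence and exploiting $[\varepsilon ,\varepsilon ]_A^j=\varepsilon $ for every $\varepsilon \in {\cal B}({\rm Con}(A))$ and $j\geq 1$. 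Once this is in hand, $\mu =\Delta _A$ gives $\zeta ^{\perp }=\varepsilon _{\zeta }^{\perp }\in {\cal B}({\rm Con}(A))$, and the passage to arbitrary $\theta $ follows from $\theta ^{\perp }=\bigcap _{\zeta }\varepsilon _{\zeta }^{\perp }=\bigl(\bigvee _{\zeta }\varepsilon _{\zeta }\bigr)^{\perp }$ combined with the Boolean sublattice structure of ${\cal B}({\rm Con}(A))$ inside ${\rm Con}(A)$.
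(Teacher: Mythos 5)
Your treatment of parts (\ref{baerhyp2}) and (\ref{baerhyp3}) matches the paper's: from $[\theta,\theta]_A^n=\Delta_A$ you obtain $\beta\subseteq\beta^\perp$ for $\beta=[\theta,\theta]_A^{n-1}$ and use the Baer hypothesis to force $\beta=\Delta_A$, then descend; your base-case step $\alpha\subseteq\alpha^\perp\cap\alpha^{\perp\perp}=\Delta_A$ is a cosmetic variant of the paper's $\beta=\beta\cap\beta^\perp=[\beta,\beta^\perp]_A=\Delta_A$.

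Part (\ref{baerhyp1}) is where you genuinely diverge from the paper, which settles it with a single appeal to the equivalences recorded just before the Proposition, and here your argument has gaps. (a) The induction $[\zeta,\zeta]_A^k=\varepsilon_\zeta\vee[\mu,\mu]_A^k$ already at $k=1$ needs $\zeta=\varepsilon_\zeta\vee(\varepsilon_\zeta^\perp\cap\zeta)$; that is a modularity property of ${\rm Con}(A)$ which the section's standing hypotheses (commutativity and join-distributivity of the commutator, together with $[\theta,\nabla_A]_A=\theta$) do not supply. (b) You correctly flag that you need $A$ semiprime, but you cannot derive it from (\ref{baerhyp2}) without presupposing the strong Baer property you are trying to prove, and your sketch for proving ``hyperarchimedean implies semiprime'' directly does not close: from $[\alpha,\alpha]_A^n=\Delta_A$ and $[\alpha,\alpha]_A^m=\varepsilon\in{\cal B}({\rm Con}(A))$ one does get $\varepsilon=\Delta_A$, but the remaining descent from $[\alpha,\alpha]_A^m=\Delta_A$ to $\alpha=\Delta_A$ is precisely the argument of (\ref{baerhyp2}) and uses a Baer-type hypothesis you do not yet have. (c) Even granting semiprimeness, the closing step is unjustified: $\theta^\perp=\bigl(\bigvee_\zeta\varepsilon_\zeta\bigr)^\perp$, but $\bigvee_\zeta\varepsilon_\zeta$ is an infinite join of Boolean elements, which need not lie in ${\cal B}({\rm Con}(A))$ (the paper only asserts ${\cal B}({\rm Con}(A))$ is a sublattice, hence closed under finite operations, in the generality of this Proposition), and the annihilator of a non-Boolean congruence need not be Boolean. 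Incidentally, once semiprimeness is available the whole $\mu$-device is unnecessary: Lemma \ref{proprperp}, (\ref{proprperp4}), gives $[\alpha,\beta]_A^\perp=(\alpha\cap\beta)^\perp$, and iterating this yields $\zeta^\perp=([\zeta,\zeta]_A^n)^\perp=\varepsilon_\zeta^\perp$ directly, with no modularity needed.
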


\begin{proof} (\ref{baerhyp1}) By the above, if $A$ is hyperarchimedean, then ${\cal B}({\rm Con}(A))={\rm Con}(A)$, thus $A$ is strongly Baer.

\noindent (\ref{baerhyp2}) Assume that $A$ is strongly Baer and let $\theta \in {\rm Con}(A)$ such that $[\theta ,\theta ]_A^n=\Delta _A$ for some $n\in \N ^*$. By the basic properties of the implication above and the fact that, if $n\geq 2$, then $[\theta ,\theta ]_A^n=[[\theta ,\theta ]_A^{n-1},[\theta ,\theta ]_A^{n-1}]_A$, this implies that $[\theta ,\theta ]_A^{n-1}\subseteq ([\theta ,\theta ]_A^{n-1})^{\perp }$. But, since $A$ is strongly Baer, $([\theta ,\theta ]_A^{n-1})^{\perp }\in {\cal B}({\rm Con}(A))$ thus its commutator with any congruence of $A$ equals the intersection, hence $[\theta ,\theta ]_A^{n-1}=[\theta ,\theta ]_A^{n-1}\cap ([\theta ,\theta ]_A^{n-1})^{\perp }=[[\theta ,\theta ]_A^{n-1},([\theta ,\theta ]_A^{n-1})^{\perp }]_A=\Delta _A$. By turning the above into a recursive argument we get that $[\theta ,\theta ]_A=\Delta _A$ and then that $\theta =\Delta _A$. By Lemma \ref{charsemiprime}, it follows that $A$ is semiprime.

\noindent (\ref{baerhyp3}) By an analogous argument to that of (\ref{baerhyp2}), taking $\theta \in {\rm PCon}(A)$, so that $[\theta ,\theta ]_A^n\in {\rm PCon}(A)$ for any $n\in \N ^*$ since $A$ has principal commutators.\end{proof}

\section{The Minimal Prime Spectrum}
\label{minspec}

\h Throughout this section, we will assume that the commutator of $A$ is commutative and distributive w.r.t. arbitrary joins, which holds if $\V $ is congruence--modular.

By an argument based on Zorn's Lemma, it follows that:\begin{itemize}
\item any prime congruence of $A$ includes a minimal prime congruence, hence $\rho _A(\Delta _A)=\bigcap {\rm Spec}(A)=\bigcap {\rm Min}(A)$;
\item moreover, for any $\theta \in {\rm Con}(A)$ and any $\psi \in V_A(\theta )=[\theta )\cap {\rm Spec}(A)$, there exists a $\phi \in Min(V_A(\theta ))=Min([\theta )\cap {\rm Spec}(A))$ such that $\phi \subseteq \psi $, hence:\end{itemize}

\begin{remark} For any $\theta \in {\rm Con}(A)$, we have:\begin{itemize}
\item $\rho _A(\theta )=\bigcap Min(V_A(\theta ))=\bigcap Min([\theta )\cap {\rm Spec}(A))$;
\item $D_A(\theta )\cap {\rm Min}(A)=\emptyset $ iff $V_A(\theta )\cap {\rm Min}(A)={\rm Min}(A)$ iff $[\theta )\cap {\rm Min}(A)={\rm Min}(A)$ iff ${\rm Min}(A)\subseteq [\theta )$ iff $\theta \subseteq \bigcap {\rm Min}(A)$ iff $\theta \subseteq \rho _A(\Delta _A)$ iff $\rho _A(\theta )=\rho _A(\Delta _A)$.
\item $D_A(\theta )\cap {\rm Min}(A)={\rm Min}(A)$ iff $V_A(\theta )\cap {\rm Min}(A)=\emptyset $;

$V_A(\theta )=\emptyset $ iff $\rho_A(\theta )=\nabla _A$, which holds if $\theta =\nabla _A$; recall from \cite{retic} that, if $\nabla _A\in {\cal K}(A)$ and $[\nabla _A,\nabla _A]_A=\nabla _A$, then $\nabla _A/\!\equiv _A=\{\nabla _A\}$, so: $\rho_A(\theta )=\nabla _A$ iff $\theta =\nabla _A$;

clearly, $V_A(\theta )=\emptyset $ implies $V_A(\theta )\cap {\rm Min}(A)=\emptyset $; the converse implication holds iff ${\rm Min}(A)={\rm Spec}(A)$ iff ${\rm Spec}(A)$ is an antichain.\end{itemize}

Indeed, ${\rm Spec}(A)$ is an antichain iff ${\rm Min}(A)={\rm Spec}(A)$, case in which $V_A(\theta )=V_A(\theta )\cap {\rm Min}(A)$.

Now, if $V_A(\theta )\cap {\rm Min}(A)=\emptyset $ implies $V_A(\theta )=\emptyset $, then let us assume by absurdum that ${\rm Min}(A)\neq {\rm Spec}(A)$, that is ${\rm Spec}(A)\nsubseteq {\rm Min}(A)$, so that there exists $\phi \in {\rm Spec}(A)\setminus {\rm Min}(A)$. But then $V_A(\phi )\cap {\rm Min}(A)=\emptyset $, while $V_A(\phi )\neq \emptyset $ since $\phi \in V_A(\phi )$; a contradiction.\label{rhomin}\end{remark}

\begin{proposition} If $\nabla _A\in {\cal K}(A)$, in particular if $\V $ is congruence--modular and semi--degenerate, then, for any $\theta \in {\rm Con}(A)$ and any $\phi \in V_A(\theta )$, the following are equivalent:\begin{enumerate}
\item\label{charminoverth1} $\phi \in Min(V_A(\theta ))$;
\item\label{charminoverth2} $\nabla _A\setminus \phi $ is a maximal element of the set of $m$--systems of $A$ which are disjoint from $\theta $.\end{enumerate}\label{charminoverth}\end{proposition}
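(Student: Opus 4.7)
The plan is to prove the two implications separately, using \textbf{Remark \ref{msist}} (which provides the basic correspondence between prime congruences and $m$-systems) together with \textbf{Lemma \ref{msistspec}} (the Zorn-type separation result, which requires precisely the hypotheses $\nabla_A \in {\cal K}(A)$ and commutator distributive over the join). Throughout, I use that $\theta \subseteq \phi$ is equivalent to $\theta \cap (\nabla_A \setminus \phi) = \emptyset$, and that $\psi \subseteq \phi$ iff $\nabla_A \setminus \phi \subseteq \nabla_A \setminus \psi$.

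For the direction (\ref{charminoverth1}) $\Rightarrow$ (\ref{charminoverth2}), I first note that $\nabla_A \setminus \phi$ is an $m$-system by Remark \ref{msist} and is disjoint from $\theta$ since $\theta \subseteq \phi$, so it is a legitimate candidate. Let $S$ be any $m$-system disjoint from $\theta$ with $\nabla_A \setminus \phi \subseteq S$. Then $\theta$ lies in the set $\{\zeta \in {\rm Con}(A) \mid \theta \subseteq \zeta,\ \zeta \cap S = \emptyset\}$, which is nonempty; by Lemma \ref{msistspec}, applied with $\alpha = \theta$, this set has a maximal element $\psi$, which is furthermore in ${\rm Spec}(A)$. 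Thus $\psi \in V_A(\theta)$ and $\psi \cap S = \emptyset$, so $\psi \subseteq \nabla_A \setminus S \subseteq \phi$. By the minimality of $\phi$ in $V_A(\theta)$, we get $\psi = \phi$, hence $\phi \cap S = \emptyset$, i.e., $S \subseteq \nabla_A \setminus \phi$. Combined with the reverse inclusion, $S = \nabla_A \setminus \phi$, proving maximality.

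For the converse (\ref{charminoverth2}) $\Rightarrow$ (\ref{charminoverth1}), suppose $\psi \in V_A(\theta)$ with $\psi \subseteq \phi$. Then, again by Remark \ref{msist}, $\nabla_A \setminus \psi$ is an $m$-system; it is disjoint from $\theta$ because $\theta \subseteq \psi$; and it contains $\nabla_A \setminus \phi$ because $\psi \subseteq \phi$. By the assumed maximality of $\nabla_A \setminus \phi$ among $m$-systems disjoint from $\theta$, we obtain $\nabla_A \setminus \psi = \nabla_A \setminus \phi$, hence $\psi = \phi$. This shows $\phi \in {\rm Min}(V_A(\theta))$.

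The only non-routine step is invoking Lemma \ref{msistspec} in the first direction to manufacture a prime $\psi$ sandwiched between $\theta$ and $\phi$ that avoids $S$; this is where the hypotheses $\nabla_A \in {\cal K}(A)$ and join-distributivity of the commutator are actually used (they guarantee the Max set is nonempty and consists of primes). The rest is bookkeeping with set complements and the definitions of $V_A(\theta)$ and minimality.
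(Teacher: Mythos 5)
Your proof is correct and follows essentially the same strategy as the paper, hinging on the same key Lemma \ref{msistspec} to manufacture a prime $\psi \subseteq \phi$ containing $\theta$ and disjoint from the competing $m$-system. The only (minor and welcome) difference is that in the forward direction you work directly with an arbitrary $m$-system $S$ disjoint from $\theta$ containing $\nabla_A \setminus \phi$ and conclude $S = \nabla_A \setminus \phi$, whereas the paper first invokes Zorn's Lemma to select a maximal such $m$-system $M$ and then shows $M = \nabla_A \setminus \phi$; your version dispenses with that extra Zorn step.
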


\begin{proof} By Remark \ref{msist}, $\nabla _A\setminus \phi $ is an $m$--system, which is, of course, disjoint from $\theta $ since $(\nabla _A\setminus \phi )\cap \theta \subseteq (\nabla _A\setminus \phi )\cap \phi =\emptyset $.

\noindent (\ref{charminoverth1})$\Rightarrow $(\ref{charminoverth2}): By an application of Zorn's Lemma, it follows that there exists a maximal element $M$ of the set of $m$--systems of $A$ which include $\nabla _A\setminus \phi $ and are disjoint from $\theta $, so that $\nabla _A\setminus \phi \subseteq M\subseteq \nabla _A\setminus \theta $ and, furthermore, $M$ is a maximal element of the set of $m$--systems of $A$ which are disjoint from $\theta $.

According to Lemma \ref{msistspec}, there exists a $\psi \in {\rm Max}\{\alpha \in {\rm Con}(A)\ |\ \theta \subseteq \alpha ,M\cap \alpha =\emptyset \}\subseteq {\rm Spec}(A)$, so that $\psi \in V_A(\theta )$ and $(\nabla _A\setminus \phi )\cap \psi \subseteq M\cap \psi =\emptyset $, thus $\nabla _A\setminus \phi \subseteq M\subseteq \nabla _A\setminus \psi $, hence $\psi \subseteq \phi $.

Since $\phi \in Min(V_A(\theta ))$, it follows that $\phi =\psi $, thus $\nabla _A\setminus \phi =M$, which is a maximal element of the set of $m$--systems of $A$ which are disjoint from $\theta $. 

\noindent (\ref{charminoverth2})$\Rightarrow $(\ref{charminoverth1}): Let $\mu $ be a minimal element of $V_A(\theta )$ with $\mu \subseteq \phi $.

By Remark \ref{msist}, $\nabla _A\setminus \mu $ is an $m$--system, which is disjoint from $\theta $ since $(\nabla _A\setminus \mu )\cap \theta \subseteq (\nabla _A\setminus \mu )\cap \mu =\emptyset $, and $\nabla _A\setminus \phi \subseteq \nabla _A\setminus \mu $.

Since $\nabla _A\setminus \phi $ is a maximal element of the set of $m$--systems of $A$ which are disjoint from $\theta $, it follows that $\nabla _A\setminus \phi =\nabla _A\setminus \mu $, thus $\phi =\mu $, which is a minimal element of $V_A(\theta )$.\end{proof}

\begin{corollary} If $\nabla _A\in {\cal K}(A)$, in particular if $\V $ is congruence--modular and semi--degenerate, then, for any $\phi \in {\rm Spec}(A)$, the following are equivalent:\begin{itemize}
\item $\phi \in {\rm Min}(A)$;
\item $\nabla _A\setminus \phi $ is a maximal element of the set of $m$--systems of $A$ which are disjoint from $\Delta _A$.\end{itemize}\label{charmin}\end{corollary}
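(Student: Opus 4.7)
The plan is to derive this corollary as a direct specialization of Proposition \ref{charminoverth} to the case $\theta = \Delta _A$. First I would observe that, by the basic properties of the Stone topology listed in the preliminaries, $V_A(\Delta _A) = {\rm Spec}(A) \cap [\Delta _A) = {\rm Spec}(A)$, since $\Delta _A$ is the bottom element of ${\rm Con}(A)$ and hence $[\Delta _A)={\rm Con}(A)$. Consequently, any $\phi \in {\rm Spec}(A)$ automatically belongs to $V_A(\Delta _A)$, and moreover ${\rm Min}(V_A(\Delta _A)) = {\rm Min}({\rm Spec}(A)) = {\rm Min}(A)$ by definition of the minimal prime spectrum.

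Next, I would note that the hypotheses of Proposition \ref{charminoverth} are precisely the hypotheses of the present corollary, so that proposition applies verbatim with the choice $\theta = \Delta _A$. Its conclusion then reads: $\phi \in {\rm Min}(A)$ if and only if $\nabla _A \setminus \phi $ is a maximal element of the set of $m$--systems of $A$ disjoint from $\Delta _A$. This is exactly the equivalence asserted in the corollary.

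There is no genuine obstacle here: the corollary is purely a restatement of the proposition in the special case $\theta = \Delta _A$. The only point worth making explicit in the write--up is why the substitution $\theta = \Delta _A$ is permissible for every $\phi \in {\rm Spec}(A)$, namely that $V_A(\Delta _A) = {\rm Spec}(A)$, so that the hypothesis $\phi \in V_A(\theta )$ in the proposition is trivially satisfied. Accordingly, the proof can be written in a single short paragraph that invokes Proposition \ref{charminoverth} with $\theta = \Delta _A$ and identifies $V_A(\Delta _A)$ with ${\rm Spec}(A)$.
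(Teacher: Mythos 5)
Your proposal is correct and takes exactly the same route as the paper, which proves the corollary simply by invoking Proposition \ref{charminoverth} with $\theta =\Delta _A$. The additional remark that $V_A(\Delta _A)={\rm Spec}(A)$ (so that every $\phi \in {\rm Spec}(A)$ satisfies the hypothesis $\phi \in V_A(\theta )$ of the proposition, and ${\rm Min}(V_A(\Delta _A))={\rm Min}(A)$) is a sound and worthwhile clarification that the paper leaves implicit.
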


\begin{proof} By Proposition \ref{charminoverth} for $\theta =\Delta _A$.\end{proof}

\begin{lemma}{\rm \cite{sim}} If $L$ is a bounded distributive lattice and $P\in {\rm Spec}_{\rm Id}(L)$, then the following are equivalent:\begin{itemize}
\item $P\in {\rm Min}_{\rm Id}(L)$;
\item for any $x\in P$, ${\rm Ann}_L(x)\nsubseteq P$.\end{itemize}\label{charminid}\end{lemma}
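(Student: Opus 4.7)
The plan is to prove the two implications separately; the easier one is $(\Leftarrow)$, so I would begin there. Assume that for every $x\in P$, ${\rm Ann}(x)\nsubseteq P$, and suppose toward a contradiction that some $Q\in {\rm Spec}_{\rm Id}(L)$ satisfies $Q\subsetneq P$. Picking $x\in P\setminus Q$, the hypothesis yields $y\in {\rm Ann}(x)$ with $y\notin P$, so that $x\wedge y=0\in Q$; primeness of $Q$ forces $x\in Q$ or $y\in Q$, each contradicting the choice of $x$ or $y$ respectively.

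For $(\Rightarrow)$, assume $P\in {\rm Min}_{\rm Id}(L)$ and suppose, toward a contradiction, that there exists $x\in P$ with ${\rm Ann}(x)\subseteq P$. The strategy is to build a proper filter $G$ of $L$ that contains $L\setminus P$ together with the element $x\in P$, and then extract from it a prime ideal strictly smaller than $P$. Concretely, set
\[ S=\{x\wedge z\ |\ z\in L\setminus P\}, \qquad G=[S). \]
Since $P$ is prime, $L\setminus P$ is a filter, and distributivity gives $(x\wedge z_1)\wedge (x\wedge z_2)=x\wedge (z_1\wedge z_2)\in S$, so $S$ is closed under finite meets, which means $G$ is exactly the upward closure of $S$. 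Crucially, $0\notin S$: were $x\wedge z=0$ for some $z\in L\setminus P$, then $z\in {\rm Ann}(x)\subseteq P$, a contradiction. Hence $G$ is a proper filter of $L$. Moreover $L\setminus P\subseteq G$ (any $z\notin P$ dominates $x\wedge z\in S$), and $x\in G$ since $x=x\wedge 1$ with $1\notin P$.

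Now the Birkhoff--Stone prime ideal theorem for bounded distributive lattices, applied to the proper ideal $\{0\}$ and the filter $G$ disjoint from it, produces a prime ideal $Q$ that is disjoint from $G$ and maximal as such. Then $Q\subseteq L\setminus G\subseteq L\setminus (L\setminus P)=P$, while $x\in G\cap P$ yields $x\in P\setminus Q$, hence $Q\subsetneq P$, contradicting the minimality of $P$.

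The main obstacle is assembling the filter $G$ correctly in the forward direction: one must verify both meet-closure, via distributivity, and properness, i.e.\ that $0\notin G$, which is the single step where the standing hypothesis ${\rm Ann}(x)\subseteq P$ is actually consumed. Once $G$ is in hand, the remainder is a routine invocation of the prime ideal theorem plus the standard primeness argument in the converse direction.
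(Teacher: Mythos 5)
Your proof is correct and gives the standard argument for this classical characterization; the paper offers no proof of its own, only the citation to Simmons, so there is nothing to diverge from. A small labeling quibble: the equation $(x\wedge z_1)\wedge (x\wedge z_2)=x\wedge(z_1\wedge z_2)$ uses only associativity, commutativity and idempotence of $\wedge$, not distributivity; distributivity is what you actually need for the Birkhoff--Stone separation theorem that produces the prime ideal $Q$ disjoint from the filter $G$. Everything else -- properness of $G$ via ${\rm Ann}(x)\subseteq P$ (the one place that hypothesis is consumed), the inclusion $L\setminus P\subseteq G$ forcing $Q\subseteq P$, and $x\in G\cap P$ forcing the inclusion to be strict -- is sound.
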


Recall from Section \ref{preliminaries} that ${\rm Spec}(A)={\rm Mi}({\rm Con}(A))\cap {\rm RCon}(A)$. By \cite[Proposition $4.4$]{stcommlat}, if $A$ is semiprime, then all annihilators in $({\rm Con}(A),[\cdot ,\cdot ]_A)$ are lattice ideals of ${\rm Con}(A)$. 

Remember that, in the commutator lattice $({\rm Con}(A),[\cdot ,\cdot ]_A)$, $R({\rm Con}(A))={\rm RCon}(A)$ and ${\rm Spec}_{{\rm Con}(A)}={\rm Spec}(A)$, and that, since ${\rm Con}(A)/\!\equiv _A$ is a frame, the elements of ${\rm Spec}_{{\rm Con}(A)/\equiv _A}$ are exactly the meet--prime elements of ${\rm Con}(A)/\!\equiv _A$, thus, by the distributivity of ${\rm Con}(A)/\!\equiv _A$, ${\rm Spec}_{{\rm Con}(A)/\equiv _A}={\rm Mi}({\rm Con}(A)/\!\equiv _A)$.

\begin{lemma} If $A$ is semiprime, then:\begin{enumerate}
\item\label{radeq1} for any $U\subseteq {\rm Con}(A)$, ${\rm Ann}_{{\rm Con}(A)/\equiv _A}(U/\!\equiv _A)={\rm Ann}_{({\rm Con}(A),[\cdot ,\cdot ]_A)}(U)/\!\equiv _A$;
\item\label{radeq2} ${\rm Spec}_{{\rm Con}(A)/\equiv _A}=\{\phi /\!\equiv _A\ |\ \phi \in {\rm Spec}(A)\}$;
\item\label{radeq3} for all $\theta \in {\rm RCon}(A)$, $\theta /\!\equiv _A\cap {\rm RCon}(A)=\{\theta \}$ and $\theta =\max (\theta /\!\equiv _A)$;
\item\label{radeq4} $\phi \mapsto \phi /\!\equiv _A$ is an order isomorphism from ${\rm Spec}(A)$ to ${\rm Spec}_{{\rm Con}(A)/\equiv _A}$;
\item\label{radeq5} $R({\rm Con}(A)/\!\equiv _A)=\{\phi /\!\equiv _A\ |\ \phi \in {\rm RCon}(A)\}$; moreover, for any $\phi \in {\rm Con}(A)$, we have: $\phi \in {\rm RCon}(A)$ iff $\phi /\!\equiv _A\in R({\rm Con}(A)/\!\equiv _A)$; thus $\phi \mapsto \phi /\!\equiv _A$ is an order isomorphism from ${\rm RCon}(A)$ to $R({\rm Con}(A)/\!\equiv _A)$.\end{enumerate}\label{radeq}\end{lemma}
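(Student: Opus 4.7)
The plan is to transfer information between ${\rm Con}(A)$ and its quotient frame ${\rm Con}(A)/\!\equiv _A$, using the following key consequence of $A$ being semiprime: $\alpha /\!\equiv _A=\Delta _A/\!\equiv _A$ iff $\rho _A(\alpha )=\Delta _A$ iff $\alpha =\Delta _A$ (since $\alpha \subseteq \rho _A(\alpha )$). Writing $[\alpha ]=\alpha /\!\equiv _A$ and using that $\equiv _A$ is a lattice congruence preserving arbitrary joins, we have $[\alpha ]\wedge [\beta ]=[\alpha \cap \beta ]$, $[\alpha ]\vee [\beta ]=[\alpha \vee \beta ]$, and the induced order is $[\alpha ]\leq [\beta ]$ iff $\rho _A(\alpha )\subseteq \rho _A(\beta )$; consequently $\rho _A(\alpha )$ is the maximum of the class $[\alpha ]$.

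For (\ref{radeq1}), compute directly: $[\alpha ]\in {\rm Ann}(U/\!\equiv _A)$ iff $[\alpha \cap \zeta ]=[\alpha ]\wedge [\zeta ]=[\Delta _A]$ for every $\zeta \in U$ iff $\alpha \cap \zeta =\Delta _A$ for every $\zeta \in U$ (by the semiprime observation above) iff $\alpha \in {\rm Ann}(U)$, yielding ${\rm Ann}(U/\!\equiv _A)={\rm Ann}(U)/\!\equiv _A$.

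For (\ref{radeq3}), any $\alpha \equiv _A\theta $ satisfies $\rho _A(\alpha )=\rho _A(\theta )=\theta $ and hence $\alpha \subseteq \theta $, with $\alpha =\theta $ precisely when $\alpha $ is itself radical---so $\theta $ is the canonical (maximal) representative of its class, and restricted to ${\rm RCon}(A)$ this class reduces to $\{\theta \}$. For (\ref{radeq2}) and (\ref{radeq4}), starting from $\phi \in {\rm Spec}(A)={\rm Mi}({\rm Con}(A))\cap {\rm RCon}(A)$, I would show $[\phi ]$ is meet--irreducible in the frame quotient: if $[\phi ]=[\alpha \cap \beta ]$, then $\phi =\rho _A(\phi )=\rho _A(\alpha \cap \beta )=\rho _A(\alpha )\cap \rho _A(\beta )$, and meet--irreducibility of $\phi $ in ${\rm Con}(A)$ forces $\phi =\rho _A(\alpha )$ or $\phi =\rho _A(\beta )$, i.e., $[\phi ]=[\alpha ]$ or $[\phi ]=[\beta ]$. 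Since ${\rm Con}(A)/\!\equiv _A$ is a frame, meet--irreducibility there coincides with meet--primality and with primality w.r.t. the commutator (which equals the meet in a frame), so $[\phi ]\in {\rm Spec}_{{\rm Con}(A)/\!\equiv _A}$. Conversely, any meet--prime class has the radical representative $\rho _A(\psi )$, which is a meet--irreducible radical congruence, hence prime. Injectivity of $\phi \mapsto [\phi ]$ on ${\rm Spec}(A)$ is immediate from (\ref{radeq3}) restricted to radical congruences; order--preservation in both directions follows from the description of the induced order above.

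For (\ref{radeq5}), $R({\rm Con}(A)/\!\equiv _A)$ consists of meets of elements of ${\rm Spec}_{{\rm Con}(A)/\!\equiv _A}$; by (\ref{radeq2}) these are images of intersections of prime congruences, i.e., of radical congruences, giving the equality. The second assertion combines $[\phi ]=[\rho _A(\phi )]$ with the fact that $\rho _A(\phi )\in {\rm RCon}(A)$ always: $[\phi ]\in R({\rm Con}(A)/\!\equiv _A)$ iff its canonical radical representative $\rho _A(\phi )$ coincides with $\phi $, i.e., iff $\phi \in {\rm RCon}(A)$. The main obstacle is (\ref{radeq2}): faithfully translating primality w.r.t. $[\cdot ,\cdot ]_A$ in ${\rm Con}(A)$ (where the commutator need not equal the meet) to meet--primality in the quotient (where it does, by frame structure). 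The bridge is the double description ${\rm Spec}(A)={\rm Mi}({\rm Con}(A))\cap {\rm RCon}(A)$ together with $\rho _A(\alpha \cap \beta )=\rho _A(\alpha )\cap \rho _A(\beta )$, ensuring $\rho _A$ respects finite meets.
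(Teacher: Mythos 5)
The paper proves this lemma purely by citation: parts (i)--(iii) are delegated to results in \cite{stcommlat}, and (iv)--(v) are read off from them. Your self-contained re-derivation is therefore, by necessity, a different route, and it is the natural one: use semiprimeness to identify $[\Delta_A]$ with $\{\Delta_A\}$, read the induced order as $[\alpha]\leq[\beta]$ iff $\rho_A(\alpha)\subseteq\rho_A(\beta)$, and transport meet-irreducibility across the quotient via $\rho_A(\alpha\cap\beta)=\rho_A(\alpha)\cap\rho_A(\beta)$. For the converse inclusion in (ii), do also record that $\rho_A(\psi)$ is proper (since $\rho_A(\nabla_A)=\nabla_A$ and a prime class is proper) and that meet-irreducibility of $\rho_A(\psi)$ in ${\rm Con}(A)$ follows because $\rho_A(\psi)=\alpha\cap\beta$ gives $[\psi]=[\alpha]\wedge[\beta]$, hence say $[\psi]=[\alpha]$, hence $\rho_A(\psi)\subseteq\alpha\subseteq\rho_A(\alpha)=\rho_A(\psi)$.

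Two points need tightening. For (iii), the statement as written, $\theta/\!\equiv_A=\{\theta\}$, asserts the $\equiv_A$-class of a radical $\theta$ is a singleton, which is false in general: in $\Z$ (reduced, hence semiprime) one has $(12)\equiv_A(6)$ with $(6)$ radical and $(12)\neq(6)$. What you actually prove, and what (iv)--(v) genuinely use, is that $\theta$ is the unique radical congruence in its class, i.e. $\theta\mapsto\theta/\!\equiv_A$ is injective on ${\rm RCon}(A)$. That is the correct and usable content, but you should say explicitly that this is the fact you are establishing, rather than silently proving a statement that differs from the one written. For (v), your phrase that radical elements of the quotient are ``images of intersections of primes'' tacitly assumes $\bigwedge_i[\phi_i]=\bigl[\bigcap_i\phi_i\bigr]$ for arbitrary families of primes $\phi_i$; this is not automatic since $\equiv_A$ is only guaranteed to preserve arbitrary joins and finite meets. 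It does hold, because if $[\gamma]\leq[\phi_i]$ for all $i$ then $\rho_A(\gamma)\subseteq\phi_i$ for all $i$, so $\rho_A(\gamma)\subseteq\bigcap_i\phi_i$ and hence $[\gamma]\leq\bigl[\bigcap_i\phi_i\bigr]$; this observation should be included. With these two additions the argument is complete and correct.
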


\begin{proof} (\ref{radeq1}) By \cite[Lemma $4.2$]{stcommlat}.

\noindent (\ref{radeq2}) By \cite[Proposition $6.2$]{stcommlat}.

\noindent (\ref{radeq3}) By \cite[Remark $5.11$]{stcommlat}.

\noindent (\ref{radeq4}) By (\ref{radeq2}), (\ref{radeq3}) and the fact that ${\rm Spec}(A)\subseteq {\rm RCon}(A)$ and ${\rm Spec}_{{\rm Con}(A)/\equiv _A}\subseteq R({\rm Con}(A)/\!\equiv _A)$.

\noindent (\ref{radeq5}) The equality follows from (\ref{radeq2}) and the definition of radical elements; by (\ref{radeq3}), we also obtain the equivalence and the order isomorphism.\end{proof}

\begin{remark} For any $\alpha ,\beta \in {\rm Con}(A)$, we have: $\alpha /\!\equiv _A\leq \beta/\!\equiv _A$ iff $\rho _A(\alpha )\subseteq \rho _A(\beta )$, because: $\alpha /\!\equiv _A\leq \beta/\!\equiv _A$ iff $\alpha /\!\equiv _A\wedge \beta/\!\equiv _A=\alpha /\!\equiv _A$ iff $(\alpha \cap \beta )/\!\equiv _A=\alpha /\!\equiv _A$ iff $\rho _A(\alpha \cap \beta )=\rho _A(\alpha )$ iff $\rho _A(\alpha )\cap \rho _A(\beta )=\rho _A(\alpha )$ iff $\rho _A(\alpha )\subseteq \rho _A(\beta )$.\end{remark}

In many of the following results, we will refer to these hypotheses:\vspace*{3pt}

\noindent \hyplA\ \ $\nabla _A\in {\cal K}(A)$ and ${\cal K}(A)$ is closed w.r.t. the commutator of $A$;\vspace*{3pt}

\noindent \hypmuA\ \ all principal ideals of ${\rm Con}(A)/\!\equiv _A$ generated by minimal prime elements are minimal prime ideals, that is: for any $p\in {\rm Min}_{{\rm Con}(A)/\equiv _A}$, we have $(p]\in {\rm Min}_{\rm Id}({\rm Con}(A)/\!\equiv _A)$.\vspace*{3pt}

Since an element of a lattice is prime iff the principal ideal it generates is prime, we have that, whenever a principal ideal of a lattice is a minimal prime ideal, it follows that its generator is a minimal prime element of that lattice. Hence condition \hypmuA\ is equivalent to:

\noindent $\bullet $ for any $p\in {\rm Con}(A)/\!\equiv _A$, we have: $p\in {\rm Min}_{{\rm Con}(A)/\equiv _A}$ iff $(p]\in {\rm Min}_{\rm Id}({\rm Con}(A)/\!\equiv _A)$.

Recall that ${\cal K}(A)={\rm Cp}({\rm Con}(A))$. Note that \hypmuA\ holds if all prime ideals of ${\rm Con}(A)/\!\equiv _A$ are principal, in particular if all ideals of ${\rm Con}(A)/\!\equiv _A$ are principal, that is if ${\rm Con}(A)/\!\equiv _A$ is compact, in particular if ${\rm Con}(A)/\!\equiv _A={\cal K}(A)/\!\equiv _A$, that is if ${\rm Con}(A)/\!\equiv _A={\cal L}(A)$ in the case when ${\cal K}(A)$ is closed w.r.t. the commutator, in particular if ${\rm Con}(A)$ is compact, that is if ${\rm Con}(A)={\cal K}(A)$.

If ${\rm Con}(A)={\cal K}(A)$, then \hyplA\ is trivially satisfied.

Recall from \cite{retic}, that, if \hyplA\ is satisfied, then the {\em reticulation} ${\cal L}(A)$ of $A$ can be constructed as the lattice ${\cal L}(A)={\cal K}(A)/\!\equiv _A$, which is a bounded sublattice of the frame ${\rm Con}(A)/\!\equiv _A$ and thus ${\cal L}(A)$ is a bounded distributive lattice.

Recall, also, that, if ${\cal V}$ is congruence--modular and semi--degenerate, then $\nabla _A\in {\cal K}(A)$.

\begin{proposition} Assume that $A$ is semiprime and let $\phi \in {\rm Spec}(A)$. Let us consider the following statements:\begin{enumerate}
\item\label{1phiMin} $\phi \in {\rm Min}(A)$;
\item\label{2K} for any $\alpha \in {\cal K}(A)$, $\alpha \subseteq \phi $ implies $\alpha ^{\perp }\nsubseteq \phi $;
\item\label{3K} for any $\alpha \in {\cal K}(A)$, $\alpha \subseteq \phi $ iff $\alpha ^{\perp }\nsubseteq \phi $;
\item\label{2Con} for any $\alpha \in {\rm Con}(A)$, $\alpha \subseteq \phi $ implies $\alpha ^{\perp }\nsubseteq \phi $;
\item\label{3Con} for any $\alpha \in {\rm Con}(A)$, $\alpha \subseteq \phi $ iff $\alpha ^{\perp }\nsubseteq \phi $.\end{enumerate}

If \hyplA\ holds, then statements (\ref{1phiMin}), (\ref{2K}) and (\ref{3K}) are equivalent.

If \hypmuA\ holds, then statements (\ref{1phiMin}), (\ref{2Con}) and (\ref{3Con}) are equivalent.\label{charmincg}\end{proposition}

\begin{proof} {\bf Case 1:} Assume that $A$ satisfies \hyplA .

\noindent (\ref{1phiMin})$\Leftrightarrow $(\ref{2K}): Recall from \cite{retic} that we have the following maps, which are clearly order--preserving \cite[Lemma~$11$.(i)]{retic}:

\noindent $\theta \mapsto \theta ^{\ast }$ from ${\rm Con}(A)$ to ${\rm Id}({\cal L}(A))$, defined by: $\theta ^{\ast }=((\theta ]\cap {\cal K}(A))/\!\equiv _A$ for all $\theta \in {\rm Con}(A)$;

\noindent $I\mapsto I_{\ast }$ from ${\rm Id}({\cal L}(A))$ to ${\rm Con}(A)$, defined by: $I_{\ast }=\bigvee \{\gamma \in {\cal K}(A)\ |\ \gamma /\!\equiv _A\in I\}$ for all $I\in {\rm Id}({\cal L}(A))$.

By \cite[Proposition~$11$]{retic}, these maps restrict to order isomorphisms between ${\rm Spec}(A)$ and ${\rm Spec}_{\rm Id}({\cal L}(A))$, inverses of each other, thus they further restrict to mutually inverse order isomorphisms between ${\rm Min}(A)$ and ${\rm Min}_{\rm Id}({\cal L}(A))$.

Let $\beta \in {\cal K}(A)$ and all $\psi \in {\rm Spec}(A)$, arbitrary. By the above, $(\psi ^{\ast })_{\ast }=\psi $. Since $\beta \in {\cal K}(A)$, $(\beta /\!\equiv _A]_{{\cal L}(A)}=(\beta /\!\equiv _A]_{{\rm Con}(A)/\equiv _A}\cap {\cal L}(A)=(\beta ]_{{\rm Con}(A)}/\!\equiv _A\cap {\cal K}(A)/\!\equiv _A=((\beta ]_{{\rm Con}(A)}\cap {\cal K}(A))/\!\equiv _A=\beta ^{\ast }$, hence ${\rm Ann}_{{\cal L}(A)}(\beta /\!\equiv _A)={\rm Ann}_{{\cal L}(A)}((\beta /\!\equiv _A]_{{\cal L}(A)})={\rm Ann}_{{\cal L}(A)}(\beta ^{\ast })$. By \cite[Lemma~$27$]{retic}, since $A$ is semiprime, we have: ${\rm Ann}_{{\cal L}(A)}(\beta ^{\ast })\subseteq \psi ^{\ast }$ iff $\beta ^{\perp }\subseteq (\psi ^{\ast })_{\ast }$, that is $\beta ^{\perp }\subseteq \psi $.

Hence: $\phi \in {\rm Min}(A)$ iff $\phi ^{\ast }\in {\rm Min}_{\rm Id}({\cal L}(A))$. By Lemma \ref{charminid}, the latter is equivalent to: $(\forall \, x\in \phi ^{\ast })\, ({\rm Ann}_{{\cal L}(A)}(x)\nsubseteq \phi ^{\ast })$, that is: $(\forall \, \alpha \in (\phi ]\cap {\cal K}(A))\, ({\rm Ann}_{{\cal L}(A)}(\alpha /\!\equiv _A)\nsubseteq \phi ^{\ast })$, which means that: $(\forall \, \alpha \in (\phi ]\cap {\cal K}(A))\, ({\rm Ann}_{{\cal L}(A)}(\alpha ^{\ast })\nsubseteq \phi ^{\ast })$, which is equivalent to: $(\forall \, \alpha \in (\phi ]\cap {\cal K}(A))\, (\alpha ^{\perp }\nsubseteq \phi )$, that is: $(\forall \, \alpha \in {\cal K}(A))\, (\alpha \subseteq \phi \Rightarrow \alpha ^{\perp }\nsubseteq \phi )$.

\noindent (\ref{3K})$\Rightarrow $(\ref{2K}): Trivial.

\noindent (\ref{2K})$\Rightarrow $(\ref{3K}): If $\alpha \in {\cal K}(A)$ is such that $\alpha ^{\perp }\nsubseteq \phi $, then, since $[\alpha ,\alpha ^{\perp }]_A=\Delta _A\subseteq \phi \in {\rm Spec}(A)$, it follows that $\alpha \subseteq \phi $.

\noindent {\bf Case 2:} Now assume that $A$ satisfies \hypmuA .

\noindent (\ref{3Con})$\Rightarrow $(\ref{2Con}): Trivial.

\noindent (\ref{2Con})$\Rightarrow $(\ref{3Con}): Analogous to the proof of (\ref{2K})$\Rightarrow $(\ref{3K}).

\noindent (\ref{1phiMin})$\Leftrightarrow $(\ref{2Con}): By Lemma \ref{radeq}.(\ref{radeq4}), the condition that $\phi \in {\rm Spec}(A)$ is equivalent to $\phi /\!\equiv _A\in {\rm Spec}_{{\rm Con}(A)/\equiv _A}$, which is equivalent to $(\phi /\!\equiv _A]\in {\rm Spec}_{\rm Id}({\rm Con}(A)/\!\equiv _A)$. 

Again by Lemma \ref{radeq}.(\ref{radeq4}), $\phi \in {\rm Min}(A)$ iff $\phi /\!\equiv _A\in {\rm Min}_{{\rm Con}(A)/\equiv _A}$, which is equivalent to $(\phi /\!\equiv _A]\in {\rm Min}_{\rm Id}({\rm Con}(A)/\!\equiv _A)$. By Lemma \ref{charminid} and Lemma \ref{radeq}.(\ref{radeq1}), the latter is equivalent to the fact that, for any $\alpha \in (\phi ]$, $(\alpha ^{\perp }/\!\equiv _A]=(\alpha ^{\perp }]/\!\equiv _A={\rm Ann}_{({\rm Con}(A),[\cdot ,\cdot ]_A)}(\alpha )/\!\equiv _A={\rm Ann}_{{\rm Con}(A)/\equiv _A}(\alpha /\!\equiv _A)\nsubseteq (\phi /\!\equiv _A]=(\phi ]/\!\equiv _A$, that is $\alpha ^{\perp }/\!\equiv _A\notin (\phi /\!\equiv _A]=(\phi ]/\!\equiv _A$. Since $\phi \in {\rm Spec}(A)\subseteq {\rm RCon}(A)$ and thus $\phi =\max(\phi /\!\equiv _A)$ by Lemma \ref{radeq}.(\ref{radeq3}), this condition is equivalent to $\alpha ^{\perp }\notin (\phi ]$, that is $\alpha ^{\perp }\nsubseteq \phi $.\end{proof}

\begin{example} Note that the equivalence in Proposition \ref{charmincg} for the case when $A$ satisfies \hyplA\ does not hold for $\alpha \in {\rm Con}(A)$, arbitrary. Indeed, if we let $A$ be the Boolean subalgebra of the power set ${\cal P}(\N )$ of the set $\N $ of natural numbers formed of the finite and the cofinite subsets of $\N $: $A=\{S\ |\ S\subseteq \N ,|S|<\aleph_0\mbox{ or }|\N \setminus S|<\aleph_0\}$, then, since $A$ is a Boolean algebra, its lattice of congruences is isomorphic to its lattice of filters, and obviously this lattice isomorphism $\varphi :{\rm Filt}(A)\rightarrow {\rm Con}(A)$ takes the set ${\rm Spec}_{{\rm Filt}(A)}$ of the prime elements of the lattice ${\rm Filt}(A)$ of the filters of $A$, which equals the set ${\rm Spec}_{\rm Filt}(A)={\rm Max}_{\rm Filt}(A)$ of the prime and thus maximal filters of $A$ by a routine proof, to ${\rm Spec}_{\rm Con}(A)={\rm Spec}(A)={\rm Max}(A)={\rm Min}(A)$ since $A$ is a Boolean algebra, therefore ${\rm Min}_{\rm Filt}(A):=Min({\rm Spec}_{\rm Filt}(A))={\rm Spec}_{\rm Filt}(A)={\rm Max}_{\rm Filt}(A)=Max({\rm Filt}(A)\setminus \{\{\N \}\})$. Now let us consider the filter $P$ of $A$ formed of the cofinite subsets of $\N $: $P=\{S\ |\ S\subseteq \N ,|\N \setminus S|<\aleph_0\}$. It is well known that ${\rm Spec}_{\rm Filt}(A)=Max({\rm Filt}(A)\setminus \{\{\N \}\})=\{M\cap A\ |\ M\in Max({\cal P}(\N )\setminus \{\{\N \}\})\}\cup \{P\}=\{[\{a\})_{{\cal P}(\N )}\cap A\ |\ a\in M\}\cup \{P\}$, in particular $P$ is a prime and thus a minimal prime filter of $A$. $P$ is clearly not a principal, thus not a compact filter of $A$. Since Boolean algebras are congruence--distributive, the commutator $[\cdot ,\cdot ]_A$ of $A$ equals the intersection, thus the commutator lattice $({\rm Con}(A),[\cdot ,\cdot ]_A=\cap )$ is isomorphic to the commutator lattice $({\rm Filt}(A),\cap )$, also endowed the commutator operation equalling the intersection, in which $P^{\perp }=\max \{F\in {\rm Filt}(A)\ |\ P\cap F=\{\N \}\}=\max \{\{\N \}\}=\{\N \}$, since any nontrivial filter $F$ of $A$ contains a proper subset $S$ of $\N $, which must thus be such that an $a\in \N $ does not belong to $S$, hence $S$ is included in the proper cofinite subset $\N \setminus \{a\}$ of $\N $, so $\N \setminus \{a\}\in P\cap F$, which means that no nontrivial filter of $A$ satisfies $P\cap F=\{\N \}$. So $P^{\perp }=\{\N \}\subset P$, of course, $P\subseteq P$. Therefore $\varphi (P)\in {\rm Spec}(A)={\rm Min}(A)$ and $\varphi (P)\in {\rm Con}(A)\setminus {\rm Cp}({\rm Con}(A))={\rm Con}(A)\setminus {\cal K}(A)$, $\varphi (P)^{\perp }=\varphi (P^{\perp })=\varphi (\{\N \})=\Delta _A\subset \varphi (P)$ and $\varphi (P)\subseteq \varphi (P)$, hence $\varphi (P)\subseteq \varphi (P)$ does not imply $\varphi (P)^{\perp }\nsubseteq \varphi (P)$.\end{example}

\begin{corollary} Let $\phi \in {\rm Spec}(A)$ and let us consider the following statements:\begin{enumerate}
\item\label{1corphiMin} $\phi \in {\rm Min}(A)$;
\item\label{2corK} for any $\alpha \in {\cal K}(A)$, $\alpha \subseteq \phi $ implies $\alpha \rightarrow \rho _A(\Delta _A)\nsubseteq \phi $;
\item\label{2corCon} for any $\alpha \in {\rm Con}(A)$, $\alpha \subseteq \phi $ implies $\alpha \rightarrow \rho _A(\Delta _A)\nsubseteq \phi $.\end{enumerate}

If \hyplA\ holds, then (\ref{1corphiMin}) is equivalent to (\ref{2corK}).

If \hypmuA\ holds, then (\ref{1corphiMin}) is equivalent to (\ref{2corCon}).\label{quomincg}\end{corollary}

\begin{proof} {\bf Case 1:} Assume that $A$ satisfies \hyplA . Then we have the following equivalences.

$\phi \in {\rm Min}(A)$ iff $\phi /\rho _A(\Delta _A)\in {\rm Min}(A)$, which, by Proposition \ref{charmincg}, since $A/\rho _A(\Delta _A)$ is semiprime, is equivalent to the fact that, for any $\alpha \in {\cal K}(A)$, $(\alpha \vee \rho _A(\Delta _A))/\rho _A(\Delta _A)\subseteq \phi /\rho _A(\Delta _A)$ implies $((\alpha \vee \rho _A(\Delta _A))/\rho _A(\Delta _A))^{\perp }\nsubseteq \phi /\rho _A(\Delta _A)$, that is $\alpha \vee \rho _A(\Delta _A)\subseteq \phi $ implies $(\alpha \rightarrow \rho _A(\Delta _A))/\rho _A(\Delta _A)\nsubseteq \phi /\rho _A(\Delta _A)$ according to Proposition \ref{impliquo}, (\ref{impliquo2}), that is $\alpha \subseteq \phi $ implies $\alpha \rightarrow \rho _A(\Delta _A)\nsubseteq \phi $ since $\phi $ is prime and thus $\rho _A(\Delta _A)\subseteq \phi $.

\noindent {\bf Case 2:}  Assume that $A$ satisfies \hypmuA . Then the proof goes the same as above, but for all $\alpha \in {\rm Con}(A)$.\end{proof}

\section{Two Topologies on the Minimal Prime Spectrum}
\label{2topminspec}

\hnosp Throughout this section, we will assume that the commutator of $A$ is commutative and  distributive w.r.t. arbitrary joins, which holds in the particular case when $\V $ is congruence--modular.\vspace*{7pt}

Clearly, the Stone topology ${\cal S}_{\rm Spec}(A)$ of ${\rm Spec}(A)$ induces the topology ${\cal S}_{\rm Min}(A)=\{D_A(\theta )\cap {\rm Min}(A)\ |\ \theta \in {\rm Con}(A)\}$ on ${\rm Min}(A)$, which has $\{D_A(a,b)\cap {\rm Min}(A)\ |\ a,b\in A\}$ as a basis and $\{V_A(\theta )\cap {\rm Min}(A)\ |\ \theta \in {\rm Con}(A)\}$ as the family of closed sets. ${\cal S}_{\rm Min}(A)$ is called the {\em Stone} or {\em spectral topology} on ${\rm Min}(A)$.

\h Throughout the rest of this section, we will also assume that $A$ is semiprime.

\begin{lemma} $\theta ^{\perp }=\bigcap (V_A(\theta ^{\perp })\cap {\rm Min}(A))$ for every $\theta \in {\rm Con}(A)$.\label{negradthus}\end{lemma}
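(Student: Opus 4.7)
The containment $\theta^{\perp}\subseteq \bigcap (V_A(\theta^{\perp})\cap {\rm Min}(A))$ is immediate, since every $\phi$ in $V_A(\theta^{\perp})\cap {\rm Min}(A)$ contains $\theta^{\perp}$ by definition. So the whole content of the lemma lies in the reverse inclusion $\bigcap (V_A(\theta^{\perp})\cap {\rm Min}(A))\subseteq \theta^{\perp}$. Note that a naive attempt via Lemma \ref{negrad} plus Remark \ref{rhomin} would give only $\theta^{\perp}=\rho_A(\theta^{\perp})=\bigcap {\rm Min}(V_A(\theta^{\perp}))$, and in general the minimal elements of $V_A(\theta^{\perp})$ need not be minimal primes of $A$. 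So that route won't work directly, and we need to use the semiprime hypothesis more substantially.

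The main tool I would use is Proposition \ref{charmincg}: for $\phi\in {\rm Spec}(A)$, membership in ${\rm Min}(A)$ is equivalent to the equivalence $\alpha\subseteq \phi$ iff $\alpha^{\perp}\nsubseteq \phi$ (for all $\alpha\in {\rm Con}(A)$). Applied with $\alpha=\theta$, this gives the clean reformulation
\[
V_A(\theta^{\perp})\cap {\rm Min}(A)=\{\phi\in {\rm Min}(A)\ |\ \theta\nsubseteq \phi\},
\]
i.e., a minimal prime contains $\theta^{\perp}$ exactly when it fails to contain $\theta$.

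With this in hand, the reverse inclusion is a short contradiction argument. Take $(a,b)\in \bigcap (V_A(\theta^{\perp})\cap {\rm Min}(A))$ and suppose, for contradiction, that $(a,b)\notin \theta^{\perp}$, i.e. $[Cg_A(a,b),\theta]_A\neq \Delta_A$. Since $A$ is semiprime, $\bigcap {\rm Min}(A)=\rho_A(\Delta_A)=\Delta_A$, so there exists $\phi\in {\rm Min}(A)$ with $[Cg_A(a,b),\theta]_A\nsubseteq \phi$. Primality of $\phi$ then forces both $Cg_A(a,b)\nsubseteq \phi$ and $\theta\nsubseteq \phi$. But $\theta\nsubseteq \phi$ means, by the reformulation above, that $\phi\in V_A(\theta^{\perp})\cap {\rm Min}(A)$, so $(a,b)\in \phi$ by the choice of $(a,b)$, contradicting $Cg_A(a,b)\nsubseteq \phi$.

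The only step that might look delicate is the use of Proposition \ref{charmincg} to identify $V_A(\theta^{\perp})\cap {\rm Min}(A)$; everything else is a standard ``semiprime $+$ primality'' sieve. No additional hypotheses beyond those already in force in this section (commutator commutative and completely join--distributive, $A$ semiprime) are needed.
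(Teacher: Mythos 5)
Your proof is correct. It does take a genuinely different route from the paper's: the paper cites only Remark \ref{rhomin} and Lemma \ref{negrad}, i.e.\ it uses $\theta^{\perp}=\rho_A(\theta^{\perp})=\bigcap{\rm Min}(V_A(\theta^{\perp}))$, which, as you rightly flag, is an intersection over the primes that are \emph{minimal over} $\theta^{\perp}$, not over $V_A(\theta^{\perp})\cap{\rm Min}(A)$. Those two intersections coincide only after one further argues that every minimal prime over the annihilator $\theta^{\perp}$ is actually a minimal prime of $A$ (the universal--algebraic analogue of a classical fact about annihilator ideals of reduced rings), and that step does use semiprimeness and the annihilator structure of $\theta^{\perp}$ in an essential way --- it is not an immediate consequence of the two results the paper cites. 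Your argument bypasses this entirely by invoking Proposition \ref{charmincg} to identify $V_A(\theta^{\perp})\cap{\rm Min}(A)$ with $D_A(\theta)\cap{\rm Min}(A)$, and then closing with a short sieve: if $(a,b)\notin\theta^{\perp}$ then $[Cg_A(a,b),\theta]_A\neq\Delta_A$, so semiprimeness ($\bigcap{\rm Min}(A)=\Delta_A$) yields a $\phi\in{\rm Min}(A)$ avoiding that commutator, primality splits this into $Cg_A(a,b)\nsubseteq\phi$ and $\theta\nsubseteq\phi$, and the latter places $\phi$ in $V_A(\theta^{\perp})\cap{\rm Min}(A)$, giving the contradiction $(a,b)\in\phi$. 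Both routes prove the same lemma, but yours is self--contained at this point of the development and makes explicit the role of Proposition \ref{charmincg}, whereas the paper's one--line citation glosses over the transfer from ${\rm Min}(V_A(\theta^{\perp}))$ to $V_A(\theta^{\perp})\cap{\rm Min}(A)$.
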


\begin{proof} Let $\theta \in {\rm Con}(A)$. Clearly, $\theta ^{\perp }\subseteq \bigcap (V_A(\theta ^{\perp })\cap {\rm Min}(A))$.

Let us denote by $\alpha =\bigcap (V_A(\theta ^{\perp })\cap {\rm Min}(A))$, so that $\alpha \subseteq \mu $ for any $\mu \in V_A(\theta ^{\perp })\cap {\rm Min}(A)$.

Assume by absurdum that $\alpha \nsubseteq \theta ^{\perp }$, so that $[\alpha ,\theta ]_A\neq \Delta _A=\rho _A(\Delta _A)=\bigcap {\rm Min}(A)$ since $A$ is semiprime, therefore $[\alpha ,\theta ]_A\nsubseteq \phi $ for some $\phi \in {\rm Min}(A)$, which implies that $\theta \nsubseteq \phi $ and $\alpha \nsubseteq \phi $, hence $\phi \notin V_A(\theta ^{\perp })$, that is $\theta ^{\perp }\nsubseteq \phi $. So $\theta \nsubseteq \phi $ and $\theta ^{\perp }\nsubseteq \phi $, while $[\theta ,\theta ^{\perp }]_A=\Delta _A\subseteq \phi $, which contradicts the fact that $\phi \in {\rm Min}(A)\subseteq {\rm Spec}(A)$.

Therefore $\bigcap (V_A(\theta ^{\perp })\cap {\rm Min}(A))=\alpha \subseteq \theta ^{\perp }$, hence the equality.\end{proof}

\begin{remark} By Lemma \ref{negradthus}, for any $\alpha ,\beta \in {\rm Con}(A)$, we have: $\alpha ^{\perp }=\beta ^{\perp }$ iff $V_A(\alpha ^{\perp })\cap {\rm Min}(A)=V_A(\beta ^{\perp })\cap {\rm Min}(A)$ iff $D_A(\alpha ^{\perp })\cap {\rm Min}(A)=D_A(\beta ^{\perp })\cap {\rm Min}(A)$.\label{eqneg}\end{remark}

\begin{proposition} For any $\alpha ,\beta ,\gamma \in {\rm Con}(A)$, we consider the following statements:\begin{enumerate}
\item\label{negmin1} $V_A(\alpha )\cap {\rm Min}(A)=V_A(\alpha ^{\perp \perp })\cap {\rm Min}(A)=D_A(\alpha ^{\perp })\cap {\rm Min}(A)$ and $D_A(\alpha )\cap {\rm Min}(A)=D_A(\alpha ^{\perp \perp })\cap {\rm Min}(A)=V_A(\alpha ^{\perp })\cap {\rm Min}(A)$;
\item\label{negmin2} $\alpha ^{\perp }\cap \beta ^{\perp }=\gamma ^{\perp }$ iff $V_A(\alpha )\cap V_A(\beta )\cap {\rm Min}(A)=V_A(\gamma )\cap {\rm Min}(A)$;
\item\label{negmin3} $\alpha ^{\perp \perp }=\beta ^{\perp }$ iff $\alpha ^{\perp }=\beta ^{\perp \perp }$ iff $V_A(\alpha )\cap {\rm Min}(A)=V_A(\beta ^{\perp })\cap {\rm Min}(A)$ iff $V_A(\alpha )\cap {\rm Min}(A)=D_A(\beta )\cap {\rm Min}(A)$ iff $D_A(\alpha ^{\perp })\cap {\rm Min}(A)=D_A(\beta )\cap {\rm Min}(A)$.\end{enumerate}

If $A$ satisfies \hyplA , then the statements above hold for all $\alpha ,\beta ,\gamma \in {\cal K}(A)$.

If $A$ satisfies \hypmuA , then the statements above hold for all $\alpha ,\beta ,\gamma \in {\rm Con}(A)$.\label{negmin}\end{proposition}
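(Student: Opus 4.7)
The plan hinges on Proposition~\ref{charmincg}, which for the semiprime $A$ reads: $\phi\in{\rm Min}(A)$ iff, for every $\alpha\in{\rm Con}(A)$, $\alpha\subseteq\phi$ is equivalent to $\alpha^{\perp}\nsubseteq\phi$. Restricted to ${\rm Min}(A)$, this is the single identity
\[V_A(\alpha)\cap{\rm Min}(A)=D_A(\alpha^{\perp})\cap{\rm Min}(A)\qquad(\star),\]
which drives everything below.

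For (\ref{negmin1}) I would write down $(\star)$, complement it inside ${\rm Min}(A)$ to get $D_A(\alpha)\cap{\rm Min}(A)=V_A(\alpha^{\perp})\cap{\rm Min}(A)$, and then substitute $\alpha^{\perp}$ for $\alpha$ in $(\star)$, absorbing $\alpha^{\perp\perp\perp}=\alpha^{\perp}$ by Lemma~\ref{proprperp}(\ref{proprperp2}) to obtain $V_A(\alpha^{\perp\perp})\cap{\rm Min}(A)=D_A(\alpha^{\perp})\cap{\rm Min}(A)$. Chaining these three identities yields both equality triples.

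For (\ref{negmin2}), I would first rewrite $\alpha^{\perp}\cap\beta^{\perp}=(\alpha\vee\beta)^{\perp}$ via Lemma~\ref{proprperp}(\ref{proprperp3}), so the left-hand side becomes $(\alpha\vee\beta)^{\perp}=\gamma^{\perp}$. Applying Remark~\ref{eqneg} (itself a consequence of Lemma~\ref{negradthus}) turns this into $V_A((\alpha\vee\beta)^{\perp})\cap{\rm Min}(A)=V_A(\gamma^{\perp})\cap{\rm Min}(A)$; complementing inside ${\rm Min}(A)$ and applying $(\star)$ rewrites it as $V_A(\alpha\vee\beta)\cap{\rm Min}(A)=V_A(\gamma)\cap{\rm Min}(A)$, which by the product rule $V_A(\alpha\vee\beta)=V_A(\alpha)\cap V_A(\beta)$ recorded in the Preliminaries is the desired right-hand side.

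For (\ref{negmin3}), the purely algebraic equivalence $\alpha^{\perp\perp}=\beta^{\perp}\Leftrightarrow\alpha^{\perp}=\beta^{\perp\perp}$ follows by applying $\cdot^{\perp}$ once and then absorbing $\alpha^{\perp\perp\perp}=\alpha^{\perp}$ through Lemma~\ref{proprperp}(\ref{proprperp1})--(\ref{proprperp2}). The remaining equivalences are direct translations: Remark~\ref{eqneg} converts $\alpha^{\perp\perp}=\beta^{\perp}$ into $V_A(\alpha^{\perp\perp})\cap{\rm Min}(A)=V_A(\beta^{\perp})\cap{\rm Min}(A)$, after which (\ref{negmin1}) is applied repeatedly to trade $V_A(\alpha^{\perp\perp})$ for $V_A(\alpha)$, $V_A(\beta^{\perp})$ for $D_A(\beta)$, and $V_A(\alpha)$ for $D_A(\alpha^{\perp})$ on ${\rm Min}(A)$. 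No step is conceptually hard; the only pitfall is the bookkeeping in (\ref{negmin2}) and (\ref{negmin3}), where one must always pass from an equality of congruences to an equality of traces on ${\rm Min}(A)$ via Remark~\ref{eqneg} rather than naively, since distinct congruences of $A$ can have the same intersection with ${\rm Min}(A)$ --- which is precisely why the statement is phrased in terms of $\perp$-closed congruences.
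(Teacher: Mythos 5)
Your proposal is correct and follows essentially the same route as the paper: establish the key identity $V_A(\alpha)\cap{\rm Min}(A)=D_A(\alpha^{\perp})\cap{\rm Min}(A)$ from Proposition~\ref{charmincg}, complement it and use $\alpha^{\perp\perp\perp}=\alpha^{\perp}$ (Lemma~\ref{proprperp}) to finish (\ref{negmin1}), then derive (\ref{negmin2}) via Lemma~\ref{proprperp}(\ref{proprperp3}) and Remark~\ref{eqneg}, and (\ref{negmin3}) via Remark~\ref{eqneg} together with (\ref{negmin1}). (There is only a trivial slip in (\ref{negmin1}): to obtain $V_A(\alpha^{\perp\perp})\cap{\rm Min}(A)=D_A(\alpha^{\perp})\cap{\rm Min}(A)$ you should substitute $\alpha^{\perp\perp}$ for $\alpha$ in $(\star)$, not $\alpha^{\perp}$, before absorbing $\alpha^{\perp\perp\perp}=\alpha^{\perp}$.)
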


\begin{proof} Let $\phi \in {\rm Min}(A)$.

\noindent {\bf Case 1:} Assume that \hyplA\ holds and let $\alpha ,\beta ,\gamma \in {\cal K}(A)$.

\noindent (\ref{negmin1}) By Proposition \ref{charmincg}, $\phi \in V_A(\alpha )$ iff $\phi \in D_A(\alpha ^{\perp })$, hence also $\phi \notin V_A(\alpha )$ iff $\phi \notin D_A(\alpha ^{\perp })$, that is $\phi \in D_A(\alpha )$ iff $\phi \in V_A(\alpha ^{\perp })$. Therefore $V_A(\alpha )\cap {\rm Min}(A)=D_A(\alpha ^{\perp })\cap {\rm Min}(A)$ and $D_A(\alpha )\cap {\rm Min}(A)=V_A(\alpha ^{\perp })\cap {\rm Min}(A)$, hence also $V_A(\alpha ^{\perp \perp })\cap {\rm Min}(A)=D_A(\alpha ^{\perp \perp \perp })\cap {\rm Min}(A)=D_A(\alpha ^{\perp })\cap {\rm Min}(A)$ and $D_A(\alpha ^{\perp \perp })\cap {\rm Min}(A)=V_A(\alpha ^{\perp \perp \perp })\cap {\rm Min}(A)=V_A(\alpha ^{\perp })\cap {\rm Min}(A)$ by Lemma \ref{proprperp}, (\ref{proprperp2}).

\noindent (\ref{negmin2}) By (\ref{negmin1}), along with Proposition \ref{proprperp}, (\ref{proprperp3}), and Remark \ref{eqneg}, $\alpha ^{\perp }\cap \beta ^{\perp }=\gamma ^{\perp }$ iff $(\alpha \vee \beta )^{\perp }=\gamma ^{\perp }$ iff $V_A((\alpha \vee \beta )^{\perp })\cap {\rm Min}(A)=V_A(\gamma ^{\perp })\cap {\rm Min}(A)$ iff $(D_A(\alpha )\cap {\rm Min}(A))\cup (D_A(\beta )\cap {\rm Min}(A))=(D_A(\alpha )\cup D_A(\beta ))\cap {\rm Min}(A)=D_A(\alpha \vee \beta )\cap {\rm Min}(A)=D_A(\gamma )\cap {\rm Min}(A)$ iff ${\rm Min}(A)\setminus ((D_A(\alpha )\cap {\rm Min}(A))\cup (D_A(\beta )\cap {\rm Min}(A)))={\rm Min}(A)\setminus (D_A(\gamma )\cap {\rm Min}(A))$ iff $V_A(\alpha )\cap V_A(\beta )\cap {\rm Min}(A)=(V_A(\alpha )\cap {\rm Min}(A))\cap (V_A(\beta )\cap {\rm Min}(A))=V_A(\gamma )\cap {\rm Min}(A)$.

\noindent (\ref{negmin3}) By (\ref{negmin1}) and Remark \ref{eqneg}, $\alpha ^{\perp \perp }=\beta ^{\perp }$ iff $V_A(\alpha ^{\perp \perp })\cap {\rm Min}(A)=V_A(\beta ^{\perp })\cap {\rm Min}(A)$ iff $D_A(\alpha ^{\perp })\cap {\rm Min}(A)=V_A(\alpha )\cap {\rm Min}(A)=V_A(\beta ^{\perp })\cap {\rm Min}(A)=D_A(\beta )\cap {\rm Min}(A)$.

By Lemma \ref{proprperp}, (\ref{proprperp2}), $\alpha ^{\perp \perp }=\beta ^{\perp }$ implies $\alpha ^{\perp }=\alpha ^{\perp \perp \perp }=\beta ^{\perp \perp }$, which also proves the converse.

\noindent {\bf Case 2:} The proof goes similarly in the case when \hypmuA\ holds, but for all $\alpha ,\beta ,\gamma \in {\rm Con}(A)$.\end{proof}

Let us denote by ${\cal F}_{\rm Min}(A)$ the topology on ${\rm Min}(A)$ generated by $\{V_A(a,b)\cap {\rm Min}(A)\ |\ a,b\in A\}$, called the {\em flat topology} or the {\em inverse topology} on ${\rm Min}(A)$. Also, we denote by ${\cal M}in(A)$, respectively ${\cal M}in(A)^{-1}$ the minimal prime spectrum of $A$ endowed with the Stone, respectively the flat topology: ${\cal M}in(A)=({\rm Min}(A),{\cal S}_{\rm Min}(A))$ and ${\cal M}in(A)^{-1}=({\rm Min}(A),{\cal F}_{\rm Min}(A))$.

\begin{remark} ${\cal F}_{\rm Min}(A)$ has $\{V_A(\alpha )\cap {\rm Min}(A)\ |\ \alpha \in {\cal K}(A)\}$ as a basis, since $V_A(\Delta _A)\cap {\rm Min}(A)={\rm Min}(A)$ and, for any $\alpha ,\beta \in {\cal K}(A)$, we have $\alpha \vee \beta \in {\cal K}(A)$ and $V_A(\alpha )\cap {\rm Min}(A)\cap V_A(\beta )\cap {\rm Min}(A)=V_A(\alpha \vee \beta )\cap {\rm Min}(A)$.\end{remark}

Recall that, for any $\alpha \in {\rm Con}(A)$, $\alpha ^{\perp }$ generates the annihilator of $\alpha $ in the commutator lattice $({\rm Con}(A),[\cdot ,\cdot ]_A)$ as a principal ideal.

Let us consider the following condition:\vspace*{3pt}

\noindent \hypcpA\ \ $\alpha ^{\perp }\in {\cal K}(A)$ for any $\alpha \in {\cal K}(A)$.\vspace*{3pt}

Condition \hypcpA\ obviously holds if the lattice ${\rm Con}(A)$ is compact: ${\rm Con}(A)={\cal K}(A)$.

\begin{proposition}\begin{enumerate}
\item\label{coarse1} The flat topology on ${\rm Min}(A)$ is coarser than the Stone topology: ${\cal F}_{\rm Min}(A)\subseteq {\cal S}_{\rm Min}(A)$.
\item\label{coarse2} If \hypcpA\ is satisfied, then the two topologies coincide: ${\cal F}_{\rm Min}(A)={\cal S}_{\rm Min}(A)$, that is ${\cal M}in(A)={\cal M}in(A)^{-1}$.\end{enumerate}\label{coarse}\end{proposition}

\begin{proof} (\ref{coarse1}) By Proposition \ref{negmin}.(\ref{negmin1}), for any $\alpha \in {\cal K}(A)$, $V_A(\alpha )\cap {\rm Min}(A)=D_A(\alpha ^{\perp })\cap {\rm Min}(A)\in {\cal S}_{\rm Min}(A)$.

\noindent (\ref{coarse2}) Again by Proposition \ref{negmin}.(\ref{negmin1}), for any $\alpha \in {\cal K}(A)$, $D_A(\alpha )\cap {\rm Min}(A)=V_A(\alpha ^{\perp })\cap {\rm Min}(A)$, which belongs to ${\cal F}_{\rm Min}(A)$ if $\alpha ^{\perp }\in {\cal K}(A)$.\end{proof}

Now let $L$ be a bounded distributive lattice. Following \cite{retic}, we denote, for any $I\in {\rm Id}(L)$ and $a\in L$, by $V_{{\rm Id},L}(I)={\rm Spec}_{\rm Id}(L)\cap [I)_{{\rm Id}(L)}$, $D_{{\rm Id},L}(I)={\rm Spec}_{\rm Id}(L)\setminus V_{{\rm Id},L}(I)$, $V_{{\rm Id},L}(a)=V_{{\rm Id},L}((a]_L)$ and $D_{{\rm Id},L}(a)=D_{{\rm Id},L}((a]_L)$.

Let us denote by ${\cal S}_{{\rm Spec},{\rm Id}}(L)$ the Stone topology on ${\rm Spec}_{\rm Id}(L)$ and by ${\cal S}_{{\rm Min},{\rm Id}}(L)$ the Stone topology on ${\rm Min}_{\rm Id}(L)$: ${\cal S}_{{\rm Spec},{\rm Id}}(L)=\{D_{{\rm Id},L}(I)\ |\ I\in {\rm Id}(L)\}$, having $\{D_{{\rm Id},L}(a)\ |\ a\in L\}$ as a basis, and ${\cal S}_{{\rm Min},{\rm Id}}(L)=\{D_{{\rm Id},L}(I)\cap {\rm Min}_{\rm Id}(L)\ |\ I\in {\rm Id}(L)\}$, having $\{D_{{\rm Id},L}(a)\cap {\rm Min}_{\rm Id}(L)\ |\ a\in L\}$ as a basis.

Also, let us denote by ${\cal F}_{{\rm Min},{\rm Id}}(L)$ the flat topology on ${\rm Min}_{\rm Id}(L)$, which has $\{V_{{\rm Id},L}(a)\cap {\rm Min}_{\rm Id}(L)\ |\ a\in L\}$ as a basis. And let ${\cal M}in_{\rm Id}(L)$, respectively ${\cal M}in_{\rm Id}(L)^{-1}$ be the minimal prime spectrum of ideals of $L$ endowed with the Stone, respectively the flat topology: ${\cal M}in_{\rm Id}(L)=({\rm Min}_{\rm Id}(L),{\cal S}_{{\rm Min},{\rm Id}}(L))$ and ${\cal M}in_{\rm Id}(L)^{-1}=({\rm Min}_{\rm Id}(L),{\cal F}_{{\rm Min},{\rm Id}}(L))$.

\begin{lemma} If \hyplA\ holds and ${\cal L}(A)$ is the reticulation of $A$, then:\begin{enumerate}
\item\label{homeoMin1} ${\cal M}in(A)$ is homeomorphic to ${\cal M}in_{\rm Id}({\cal L}(A))$;
\item\label{homeoMin2} ${\cal M}in(A)^{-1}$ is homeomorphic to ${\cal M}in_{\rm Id}({\cal L}(A))^{-1}$.\end{enumerate}\label{homeoMin}\end{lemma}

\begin{proof} Assume that $A$ satisfies \hyplA , so that its reticulation can be constructed as: ${\cal L}(A)={\cal K}(A)/\!\equiv _A$. As in \cite{retic}, let us denote by $u:{\rm Spec}(A)\rightarrow {\rm Spec}_{\rm Id}({\cal L}(A))$ and $v:{\rm Spec}_{\rm Id}({\cal L}(A))\rightarrow {\rm Spec}(A)$ the mutually inverse homeomorphisms w.r.t. the Stone topologies mentioned in the proof of Proposition \ref{charmincg}: $u(\phi )=\phi ^{\ast }$ for all $\phi \in {\rm Spec}(A)$ and $v(P)=P_{\ast }$ for all $P\in {\rm Spec}_{\rm Id}({\cal L}(A))$.

\noindent (\ref{homeoMin1}) $u$ and $v$ obviously restrict to homeomorphisms between ${\cal M}in(A)$ and ${\cal M}in_{\rm Id}({\cal L}(A))$.

\noindent (\ref{homeoMin2}) Let us recall the flat topology ${\cal F}_{{\rm Min}(A)}$ has $\{V_A(\alpha )\cap {\rm Min}(A)\ |\ \alpha \in {\cal K}(A)\}$ as a basis, while the flat topology on ${\cal F}_{{\rm Min},{\rm Id}}({\cal L}(A))$ has $\{V_{{\rm Id},{\cal L}(A)}((a]_{{\cal L}(A)})\cap {\rm Min}_{\rm Id}({\cal L}(A))\ |\ a\in {\cal L}(A)\}=\{V_{{\rm Id},{\cal L}(A)}((\alpha /\!\equiv _A]_{{\cal L}(A)})\cap {\rm Min}_{\rm Id}({\cal L}(A))\ |\ \alpha \in {\cal K}(A)\}$ as a basis.

In the proof of \cite[Proposition~$11$]{retic} we have obtained that, for all $\alpha \in {\rm Con}(A)$, $u(V_A(\alpha ))=V_{{\rm Id},{\cal L}(A)}(\alpha ^{\ast })$. Note that, if $\alpha \in {\cal K}(A)$, then $\alpha ^{\ast }=(\alpha /\!\equiv _A]_{{\cal L}(A)}$, thus $u(V_A(\alpha ))=V_{{\rm Id},{\cal L}(A)}((\alpha /\!\equiv _A]_{{\cal L}(A)})$, hence $u$ is open w.r.t. the flat topologies on the minimal prime spectra.

Consequently, for all $\alpha \in {\cal K}(A)$, $v(V_{{\rm Id},{\cal L}(A)}((\alpha /\!\equiv _A]_{{\cal L}(A)}))=v(u(V_A(\alpha )))=V_A(\alpha )$, hence $v$ is open w.r.t. the flat topologies on the minimal prime spectra.

Therefore $u$ and $v$ are mutually inverse homeomorphisms between ${\cal M}in(A)^{-1}$ and ${\cal M}in_{\rm Id}({\cal L}(A))^{-1}$.\end{proof}

\begin{proposition} If \hyplA\ holds, then ${\cal M}in(A)^{-1}$ is a compact ${\rm T}_1$ topological space.\end{proposition}

\begin{proof} Assume that $A$ satisfies \hyplA , and let us consider the reticulation of $A$: ${\cal L}(A)={\cal K}(A)/\!\equiv _A$.

By Hochster's theorem \cite[Proposition~$3.13$]{joh}, there exists a commutative unitary ring $R$ such that the reticulation ${\cal L}(R)$ of $R$ is lattice isomorphic to ${\cal L}(A)$. Recall that the commutator lattice of the ideals of $R$ endowed with the multiplication of ideals as commutator operation is isomorphic to the commutator lattice of its congruences, $({\rm Con}(R),[\cdot ,\cdot ]_R)$.

By Lemma \ref{homeoMin}.(\ref{homeoMin2}), the minimal prime spectrum of $R$ endowed with the flat topology, ${\cal M}in(R)^{-1}$, is homeomorphic to ${\cal M}in_{\rm Id}({\cal L}(R))^{-1}$ and thus to ${\cal M}in_{\rm Id}({\cal L}(A))^{-1}$, which in turn is homeomorphic to ${\cal M}in(A)^{-1}$, thus ${\cal M}in(R)^{-1}$ is homeomorphic to ${\cal M}in(A)^{-1}$.

By \cite[Theorem~$3.1$]{klms}, ${\cal M}in(R)^{-1}$ is compact and ${\rm T}_1$. Therefore ${\cal M}in(A)^{-1}$ is compact and ${\rm T}_1$.\end{proof}

Following \cite{retic}, under \hyplA, we will denote the lattice bounds of ${\cal L}(A)$ by ${\bf 0}$ and ${\bf 1}$, so ${\bf 0}=\nabla _A/\!\equiv _A$ and ${\bf 1}=\Delta _A/\!\equiv _A$.

\begin{theorem} If \hyplA\ is satisfied, then the following are equivalent:\begin{enumerate}
\item\label{charMincp1} ${\cal M}in(A)={\cal M}in(A)^{-1}$;
\item\label{charMincp2} ${\cal M}in(A)$ is compact;
\item\label{charMincp3} for any $\alpha \in {\cal K}(A)$, there exists $\beta \in {\cal K}(A)$ such that $\beta \subseteq \alpha ^{\perp }$ and $(\alpha \vee \beta )^{\perp }=\Delta _A$.\end{enumerate}\label{charMincp}\end{theorem}

\begin{proof} Assume that $A$ satisfies \hyplA. Then the reticulation ${\cal L}(A)$ of $A$ is a bounded distributive lattice and thus a distributive lattice with zero, hence, according to \cite[Proposition~$5.1$]{speed2}, the following are equivalent:

\quad $(a)$ ${\cal M}in_{\rm Id}({\cal L}(A))={\cal M}in_{\rm Id}({\cal L}(A))^{-1}$;

\quad $(b)$ ${\cal M}in_{\rm Id}({\cal L}(A))$ is compact;

\quad $(c)$ for any $x\in {\cal L}(A)$, there exists $y\in {\cal L}(A)$ such that $x\wedge y={\bf 0}$ and ${\rm Ann}_{{\cal L}(A)}(x\vee y)=\{{\bf 0}\}$.

By Lemma \ref{homeoMin}, (\ref{charMincp1}) is equivalent to $(a)$. By Lemma \ref{homeoMin}.(\ref{homeoMin1}), (\ref{charMincp2}) is equivalent to $(b)$.

To prove that (\ref{charMincp3}) is equivalent to $(c)$, let $\alpha ,\beta \in {\cal K}(A)$, arbitrary, so that $\alpha /\!\equiv _A$, and $\beta /\!\equiv _A$ are arbitrary elements of ${\cal L}(A)$.

$A$ is semiprime, that is $\rho _A(\Delta _A)=\Delta _A$, which is equivalent to $\Delta _A/\!\equiv _A=\{\Delta _A\}$ according to \cite[Remark~$5.10$]{stcommlat}, hence, for any $\theta \in {\rm Con}(A)$, $\theta =\Delta _A$ iff $\theta \in \Delta _A/\!\equiv _A$ iff $\theta /\!\equiv _A=\Delta _A/\!\equiv _A$, that is $\theta /\!\equiv _A={\bf 0}$.

Recall that $\beta \subseteq \alpha ^{\perp }$ is equivalent to $[\alpha ,\beta ]_A=\Delta _A$ and thus to $[\alpha ,\beta ]_A/\!\equiv _A={\bf 0}$ by the above, that is $\alpha /\!\equiv _A\wedge \beta /\!\equiv _A={\bf 0}$.

Furthermore, since $A$ is semiprime, we have, for all $\theta \in {\rm Con}(A)$: by \cite[Lemma~$5.18.(ii)$]{stcommlat}, ${\rm Ann}_{({\rm Con}(A),[\cdot ,\cdot ]_A)}(\theta )={\rm Ann}_{{\rm Con}(A)}(\theta )$, and, by Lemma \ref{radeq}.(\ref{radeq1}), ${\rm Ann}_{{\rm Con}(A)}(\theta )=\{\Delta _A\}$ iff ${\rm Ann}_{{\rm Con}(A)/\equiv _A}(\theta /\!\equiv _A)=\{{\bf 0}\}$.

$(\alpha \vee \beta )^{\perp }=\Delta _A$ means that ${\rm Ann}_{({\rm Con}(A),[\cdot ,\cdot ]_A)}(\alpha \vee \beta )=\{\Delta _A\}$, that is ${\rm Ann}_{{\rm Con}(A)}(\alpha \vee \beta )=\{\Delta _A\}$, which is equivalent to ${\rm Ann}_{{\rm Con}(A)/\equiv _A}(\alpha /\!\equiv _A\vee \beta /\!\equiv _A)=\{{\bf 0}\}$, which in turn is equivalent to ${\rm Ann}_{{\cal L}(A)}(\alpha /\!\equiv _A\vee \beta /\!\equiv _A)=\{{\bf 0}\}$, because, if we denote by $\theta =[\alpha ,\beta ]_A$, so that $\theta \in {\cal K}(A)$ and $\theta /\!\equiv _A=\alpha /\!\equiv _A\vee \beta /\!\equiv _A\in {\cal L}(A)$, we have:

since ${\cal L}(A)$ is a bounded sublattice of ${\rm Con}(A)/\!\equiv _A$, ${\rm Ann}_{{\rm Con}(A)/\equiv _A}(\theta /\!\equiv _A)=\{{\bf 0}\}$ implies ${\rm Ann}_{{\cal L}(A)}(\theta /\!\equiv _A)={\rm Ann}_{{\rm Con}(A)/\equiv _A}(\theta /\!\equiv _A)\cap {\cal L}(A)=\{{\bf 0}\}$;

for the converse, recall that $\max {\rm Ann}_{{\rm Con}(A)}(\theta )=\max {\rm Ann}_{({\rm Con}(A),[\cdot ,\cdot ]_A)}(\theta )=\bigvee \{\gamma \in {\cal K}(A)\ |\ [\theta ,\gamma ]_A=\Delta _A\}=\bigvee \{\gamma \in {\cal K}(A)\ |\ [\theta ,\gamma ]_A/\!\equiv _A={\bf 0}\}=\bigvee \{\gamma \in {\cal K}(A)\ |\ \theta /\!\equiv _A\wedge \gamma /\!\equiv _A={\bf 0}\}$, thus, if $\theta \in {\cal K}(A)$, so that $\theta /\!\equiv _A\in {\cal L}(A)$, then $\max {\rm Ann}_{{\rm Con}(A)}(\theta )=\bigvee \{\gamma \in {\cal K}(A)\ |\ \gamma /\!\equiv _A\in {\rm Ann}_{{\cal L}(A)}(\theta /\!\equiv _A)\}$; hence, if ${\rm Ann}_{{\cal L}(A)}(\theta /\!\equiv _A)=\{{\bf 0}\}$, then $\max {\rm Ann}_{{\rm Con}(A)}(\theta )=\bigvee \{\gamma \in {\cal K}(A)\ |\ \gamma /\!\equiv _A\in \{{\bf 0}\}\}=\bigvee \{\gamma \in {\cal K}(A)\ |\ \gamma /\!\equiv _A={\bf 0}\}=\bigvee \{\gamma \in {\cal K}(A)\ |\ \gamma =\Delta _A\}=\Delta _A$, thus ${\rm Ann}_{{\rm Con}(A)}(\theta )=\{\Delta _A\}$, which is equivalent to ${\rm Ann}_{{\rm Con}(A)/\equiv _A}(\theta /\!\equiv _A)=\{{\bf 0}\}$.\end{proof}

\begin{proposition} If $\nabla _A\in {\cal K}(A)$ and ${\rm Spec}(A)$ is unordered, then ${\cal M}in(A)$ is compact.\label{caseMincp}\end{proposition}

\begin{proof} Assume that $\nabla _A\in {\cal K}(A)$ and ${\rm Spec}(A)$ is unordered, that is ${\rm Spec}(A)={\rm Min}(A)$, and let $\displaystyle {\rm Min}(A)=\bigcup _{i\in I}(D_A(\alpha _i)\cap {\rm Min}(A))$ for some nonempty family $\{\alpha _i\ |\ i\in I\}$ of congruences of $A$.

Then $\displaystyle {\rm Min}(A)=(\bigcup _{i\in I}D_A(\alpha _i))\cap {\rm Min}(A)=D_A(\bigvee _{i\in I}\alpha _i)\cap {\rm Min}(A)$, so $V_A(\bigvee _{i\in I}\alpha _i)\cap {\rm Min}(A)=\emptyset $. By Remark \ref{rhomin}, this implies that $\displaystyle \bigvee _{i\in I}\alpha _i=\nabla _A\in {\cal K}(A)$, so that $\displaystyle \nabla _A=\bigvee _{i\in F}\alpha _i$ for some finite subset $F$ of $I$, hence $\displaystyle {\rm Min}(A)=D_A(\bigvee _{i\in F}\alpha _i)\cap {\rm Min}(A)=(\bigcup _{i\in F}D_A(\alpha _i))\cap {\rm Min}(A)=\bigcup _{i\in F}(D_A(\alpha _i)\cap {\rm Min}(A))$, therefore ${\cal M}in(A)$ is compact.\end{proof}

\begin{theorem} If $A$ satisfies \hyplA\ and \hypcpA , in particular if ${\rm Con}(A)={\cal K}(A)$, then ${\cal M}in(A)$ is a Hausdorff topological space consisting solely of clopen sets, thus the Stone topology ${\cal S}_{{\rm Min}(A)}$ is a complete Boolean sublattice of ${\cal P}({\rm Min}(A))$. If, moreover, ${\rm Spec}(A)$ is unordered, then ${\cal M}in(A)$ is also compact.\label{Hausdorff}\end{theorem}

\begin{proof} By Proposition \ref{negmin}, (\ref{negmin1}), the Stone topology ${\cal S}_{\rm Min}(A)$ on ${\rm Min}(A)$ consists entirely of clopen sets.

Let $\mu ,\nu $ be distinct minimal prime congruences of $A$. Then there exist $a,b\in A$ such that $(a,b)\in \mu \setminus \nu $, so that $Cg_A(a,b)\subseteq \mu $ and $Cg_A(a,b)\nsubseteq \nu $, so that $Cg_A(a,b)^{\perp }\nsubseteq \mu $ by Proposition \ref{charmincg}, so $\mu \in D_A(Cg_A(a,b)^{\perp })\cap {\rm Min}(A)$ and $\nu \in D_A(Cg_A(a,b))\cap {\rm Min}(A)$. $D_A(Cg_A(a,b))\cap {\rm Min}(A)\cap D_A(Cg_A(a,b)^{\perp })\cap {\rm Min}(A)=D_A(Cg_A(a,b))\cap D_A(Cg_A(a,b)^{\perp })\cap {\rm Min}(A)=D_A([Cg_A(a,b),Cg_A(a,b)^{\perp }]_A)\cap {\rm Min}(A)=D_A(\Delta _A)\cap {\rm Min}(A)=\emptyset \cap {\rm Min}(A)=\emptyset $, therefore the topological space $({\rm Min}(A),\{D_A(\theta )\cap {\rm Min}(A)\ |\ \theta \in {\rm Con}(A)\})$ is Hausdorff.

By Proposition \ref{caseMincp}, if ${\rm Spec}(A)$ is an antichain, then ${\cal M}in(A)$ is also compact.\end{proof}

\section{$m$--extensions}
\label{algext}

\hnosp Throughout this section, we will assume that $A$ is a subalgebra of $B$, the algebras $A$ and $B$ are semiprime and the commutators of $A$ and $B$ are commutative and distributive w.r.t. arbitrary joins.

In particular, the following results hold for extensions of semiprime algebras in congruence--modular varieties.

To avoid any danger of confusion, we will denote by $\alpha ^{\perp A}=\alpha \rightarrow \Delta _A$ and $X^{\perp A}=Cg_A(X)^{\perp A}$ for any $\alpha \in {\rm Con}(A)$ and any $X\subseteq A^2$ and by $\beta ^{\perp B}=\beta \rightarrow \Delta _B$ and $Y^{\perp B}=Cg_B(Y)^{\perp B}$ for any $\beta \in {\rm Con}(B)$ and any $Y\subseteq B^2$. See this notation for arbitrary subsets in Section \ref{rescglat}.\vspace*{7pt}

We call the extension $A\subseteq B$:\begin{itemize}
\item {\em admissible} iff the map $i_{A,B}:A\rightarrow B$ is admissible, that is iff $i_{A,B}^*(\phi )=\phi \cap \nabla _A\in {\rm Spec}(A)$ for all $\phi \in {\rm Spec}(B)$;
\item {\em ${\rm Min}$--admissible} or an {\em $m$--extension} iff $i_{A,B}^*(\mu )=\mu \cap \nabla _A\in {\rm Min}(A)$ for all $\phi \in {\rm Min}(B)$.\end{itemize}

\begin{lemma} Assume that the extension $A\subseteq B$ is admissible and let us consider the following statements:\begin{enumerate}
\item\label{mext1} $A\subseteq B$ is an $m$--extension;
\item\label{mext2} for any $\alpha \in {\cal K}(A)$ and any $\mu \in {\rm Min}(B)$, if $\alpha \subseteq \mu $, then $\alpha ^{\perp A}\nsubseteq \mu $;
\item\label{mext3} for any $\alpha \in {\cal K}(A)$ and any $\mu \in {\rm Min}(B)$, $\alpha \subseteq \mu $ iff $\alpha ^{\perp A}\nsubseteq \mu $;
\item\label{mext4} for any $\alpha \in {\rm Con}(A)$ and any $\mu \in {\rm Min}(B)$, if $\alpha \subseteq \mu $, then $\alpha ^{\perp A}\nsubseteq \mu $;
\item\label{mext5} for any $\alpha \in {\rm Con}(A)$ and any $\mu \in {\rm Min}(B)$, $\alpha \subseteq \mu $ iff $\alpha ^{\perp A}\nsubseteq \mu $.\end{enumerate}

If $A$ satisfies \hyplA , then (\ref{mext1}), (\ref{mext2}) and (\ref{mext3}) are equivalent.

If $A$ satisfies \hypmuA , then (\ref{mext1}), (\ref{mext4}) and (\ref{mext5}) are equivalent.\label{mext}\end{lemma}

\begin{proof} For any $\alpha \in {\rm Con}(A)$ and $\mu \in {\rm Con}(B)$, we obviously have: $\alpha \subseteq \mu $ iff $\alpha \subseteq \mu \cap \nabla _A$, and $\alpha ^{\perp A}\nsubseteq \mu $ iff $\alpha ^{\perp A}\nsubseteq \mu \cap \nabla _A$.

Now assume that $A$ satisfies \hyplA\ or \hypmuA , and let $M={\cal K}(A)$ if \hyplA\ holds, and $M={\rm Con}(A)$ if \hypmuA\ holds.

Since the extension $A\subseteq B$ is admissible, we have $\mu \cap \nabla _A\in {\rm Spec}(A)$ for any $\mu \in {\rm Spec}(B)$. $A\subseteq B$ is an $m$--extension iff $\mu \cap \nabla _A\in {\rm Min}(A)$ for any $\mu \in {\rm Min}(B)$, hence, by Proposition \ref{charmincg}: $A\subseteq B$ is an $m$--extension iff, for all $\mu \in {\rm Min}(B)$ and all $\alpha in M$, the following equivalence holds: $\alpha \subseteq \mu \cap \nabla _A $ iff $\alpha ^{\perp A}\nsubseteq \mu \cap \nabla _A$; by the above, this is equivalent to: $\alpha \subseteq \mu $ iff $\alpha ^{\perp A}\nsubseteq \mu $.\end{proof}

If $A\subseteq B$ is an $m$--extension, then the function $\Gamma =i_{A,B}^*\mid _{{\rm Min}(B)}:{\rm Min}(B)\rightarrow {\rm Min}(A)$, $\Gamma (\mu )=\mu \cap \nabla _A$ for all $\mu \in {\rm Min}(B)$, is well defined.

\begin{proposition} If the extension $A\subseteq B$ is admissible, then, for every $\psi \in {\rm Spec}(A)$, there exists a $\mu \in {\rm Min}(B)$ such that $\mu \cap \nabla _A\subseteq \psi $.\end{proposition}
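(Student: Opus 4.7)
The plan is a two-step reduction. First, I would find some prime $\phi \in \mathrm{Spec}(B)$ with $\phi \cap \nabla_A \subseteq \psi$; then, invoking the observation at the start of Section 4 that every prime congruence contains a minimal prime, I would pick $\mu \in \mathrm{Min}(B)$ with $\mu \subseteq \phi$, which forces $\mu \cap \nabla_A \subseteq \phi \cap \nabla_A \subseteq \psi$.

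For the first step, set $S := \nabla_A \setminus \psi$, viewed inside $B^2$. By Remark \ref{msist}, $S$ is an $m$-system in $A$; I claim that $S$ is also an $m$-system in $B$. Once this is granted, $\Delta_B \cap S = \Delta_A \setminus \psi = \emptyset$ (since $\Delta_A \subseteq \psi$), so the family $\{\theta \in \mathrm{Con}(B) : \theta \cap S = \emptyset\}$ is nonempty and closed under unions of chains; Zorn's Lemma yields a maximal element $\phi$, and Lemma \ref{msistspec} (applied to $B$ with $\alpha = \Delta_B$) guarantees $\phi \in \mathrm{Spec}(B)$. The disjointness $\phi \cap S = \emptyset$ is precisely the required inclusion $\phi \cap \nabla_A \subseteq \psi$.

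The key claim reduces to the following commutator--monotonicity statement under the inclusion $i_{A,B}$: for all $\alpha, \beta \in \mathrm{Con}(A)$,
\[ [\alpha, \beta]_A \;\subseteq\; [Cg_B(\alpha), Cg_B(\beta)]_B. \]
To prove this, suppose $\nu \in \mathrm{Con}(B)$ satisfies $C(Cg_B(\alpha), Cg_B(\beta); \nu)$ in $B$. Then $\nu \cap A^2 \in \mathrm{Con}(A)$, and since every $\tau$-term evaluates identically in $A$ and $B$ on arguments from $A$, the term condition $C(\alpha, \beta; \nu \cap A^2)$ in $A$ is simply the specialization of the $B$-condition to pairs taken from $\alpha \subseteq Cg_B(\alpha)$ and $\beta \subseteq Cg_B(\beta)$. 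Hence $[\alpha, \beta]_A \subseteq \nu \cap A^2 \subseteq \nu$, and intersecting over all such $\nu$ gives the displayed inclusion. Applied with $\alpha = Cg_A(a,b)$ and $\beta = Cg_A(c,d)$ for arbitrary $(a,b),(c,d) \in S$, and combined with the fact that primeness of $\psi$ forces $[Cg_A(a,b), Cg_A(c,d)]_A \nsubseteq \psi$, this yields some $(u,v) \in [Cg_B(a,b), Cg_B(c,d)]_B \cap S$, confirming the claim.

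The main obstacle is precisely the commutator--monotonicity inclusion above: it has to be unpacked from the bare term--condition definition rather than from the higher-level commutator manipulations (modularity, residuation) developed earlier in the paper. Fortunately the argument is short once the definitions are spelled out carefully, and the rest of the proof is a routine $m$-system/Zorn maneuver in $B$ followed by descent to a minimal prime.
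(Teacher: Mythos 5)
Your proof is correct and follows the same overall strategy as the paper's: form the $m$-system $S=\nabla_A\setminus\psi$, pass to $B$, produce a maximal disjoint congruence (prime by Lemma~\ref{msistspec}), and descend to a minimal prime. The one place you diverge is that you replace the paper's bare citation of \cite[Lemma 4.18]{gulo} (for the fact that an $m$-system of $A$ remains an $m$-system of $B$) with a self-contained derivation of the commutator-monotonicity inclusion $[\alpha,\beta]_A\subseteq[Cg_B(\alpha),Cg_B(\beta)]_B$, unpacked directly from the term-condition definition. That derivation is sound: restricting a $\nu\in\mathrm{Con}(B)$ satisfying $C(Cg_B(\alpha),Cg_B(\beta);\nu)$ to $A^2$, and using that $\tau$-terms over $A$ evaluate identically in $A$ and $B$, yields $C(\alpha,\beta;\nu\cap A^2)$ in $A$, so $[\alpha,\beta]_A\subseteq\nu$; intersecting over all such $\nu$ gives the inclusion. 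A small bonus of your version is that nothing in it actually invokes the admissibility hypothesis, so your argument makes visible that this particular proposition needs only the standing assumptions of the section, not admissibility (which is, however, genuinely needed for the subsequent Corollary~\ref{Gammasurj}, where $\mu\cap\nabla_A$ must land in $\mathrm{Spec}(A)$).
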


\begin{proof} Since $\psi \in {\rm Spec}(A)$, $\nabla _A\setminus \psi $ is an $m$--system in $A$, thus also in $B$, according to \cite[Lemma $4.18$]{gulo}. Hence there exists a $\nu \in {\rm Max}\{\gamma \in {\rm Con}(B)\ |\ \gamma \cap 
(\nabla _A\setminus \psi )=\emptyset \}$, so that $\nu \in {\rm Spec}(B)$ by Lemma \ref{msistspec}, and thus there exists a $\mu \in {\rm Min}(B)$ with $\mu \subseteq \nu $, so that $\mu \cap (\nabla _A\setminus \psi )\subseteq \nu \cap (\nabla _A\setminus \psi )=\emptyset $ and thus $(\mu \cap \nabla _A)\setminus \psi =\emptyset $, so $\mu \cap \nabla _A\subseteq \psi $.\end{proof}

\begin{corollary}\begin{itemize}
\item If the extension $A\subseteq B$ is admissible, then, for every $\psi \in {\rm Min}(A)$, there exists a $\mu \in {\rm Min}(B)$ such that $\mu \cap \nabla _A=\psi $.
\item If $A\subseteq B$ is an admissible $m$--extension, then $\Gamma :{\rm Min}(B)\rightarrow {\rm Min}(A)$ is surjective.
\end{itemize}\label{Gammasurj}\end{corollary}

\begin{lemma} If $A\subseteq B$ is an admissible $m$--extension, then, for any $\theta ,\zeta \in {\rm Con}(A)$: $[\theta ,\zeta ]_A=\Delta _A$ iff $[Cg_B(\theta ),Cg_B(\zeta )]_B=\Delta _B$.\label{eqcomm}\end{lemma}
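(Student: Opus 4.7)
The plan is to reduce each side of the equivalence to a containment in every minimal prime congruence, exploiting that both $A$ and $B$ are semiprime, and then transport across $A\subseteq B$ via the $m$-extension hypothesis. Recall from the start of Section 4 that semiprimeness of $A$ and $B$ gives $\bigcap {\rm Min}(A)=\rho _A(\Delta _A)=\Delta _A$ and $\bigcap {\rm Min}(B)=\Delta _B$; also, since $[\alpha ,\beta ]\subseteq \alpha \cap \beta $ always and $\rho $ identifies $[\alpha ,\beta ]$ with $\alpha \cap \beta $, in each of the semiprime algebras $A$ and $B$ the equality $[\alpha ,\beta ]=\Delta $ is equivalent to $\alpha \cap \beta =\Delta $.

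For the implication ($\Leftarrow $), suppose $[Cg_B(\theta ),Cg_B(\zeta )]_B=\Delta _B$. Semiprimeness of $B$ gives $Cg_B(\theta )\cap Cg_B(\zeta )=\Delta _B$; since $\theta \subseteq Cg_B(\theta )$ and $\zeta \subseteq Cg_B(\zeta )$ as subsets of $B^2$, we obtain $\theta \cap \zeta \subseteq \Delta _B\cap A^2=\Delta _A$, whence $[\theta ,\zeta ]_A\subseteq \theta \cap \zeta =\Delta _A$. Conversely, assume $[\theta ,\zeta ]_A=\Delta _A$. Using $\bigcap {\rm Min}(B)=\Delta _B$, it suffices to show $[Cg_B(\theta ),Cg_B(\zeta )]_B\subseteq \nu $ for an arbitrary $\nu \in {\rm Min}(B)$. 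The $m$-extension hypothesis gives $\Gamma (\nu )=\nu \cap \nabla _A\in {\rm Min}(A)\subseteq {\rm Spec}(A)$; since $[\theta ,\zeta ]_A=\Delta _A\subseteq \Gamma (\nu )$ and $\Gamma (\nu )$ is prime in $A$, we conclude $\theta \subseteq \Gamma (\nu )\subseteq \nu $ or $\zeta \subseteq \Gamma (\nu )\subseteq \nu $, hence $Cg_B(\theta )\subseteq \nu $ or $Cg_B(\zeta )\subseteq \nu $, and in either case $[Cg_B(\theta ),Cg_B(\zeta )]_B\subseteq Cg_B(\theta )\cap Cg_B(\zeta )\subseteq \nu $.

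There is no real obstacle here; the only point to watch is the double viewpoint on $\theta ,\zeta \subseteq A^2\subseteq B^2$, so that restricting a congruence of $B$ to $A^2$ is the same as intersecting with $\nabla _A$, and the use of the $m$-extension hypothesis precisely to guarantee that $\Gamma (\nu )$ is a prime congruence of $A$ into which $[\theta ,\zeta ]_A=\Delta _A$ trivially fits.
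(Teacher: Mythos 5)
Your proof is correct. Both directions reach the right conclusion, but your backward direction is genuinely more elementary than the paper's. The paper proves the lemma as a single chain of equivalences passing through quantification over ${\rm Min}(B)$ and then transferring to ${\rm Min}(A)$ via Corollary \ref{Gammasurj} (surjectivity of $\Gamma$, which in turn needs admissibility). You instead split the two directions: for $\Rightarrow$ you follow essentially the same route as the paper (each $\nu \in {\rm Min}(B)$ contracts to $\Gamma(\nu)\in {\rm Min}(A)\subseteq {\rm Spec}(A)$, and primality forces $\theta$ or $\zeta$ into $\nu$), but for $\Leftarrow$ you bypass ${\rm Min}(B)$ entirely: semiprimeness of $B$ turns $[Cg_B(\theta),Cg_B(\zeta)]_B=\Delta_B$ into $Cg_B(\theta)\cap Cg_B(\zeta)=\Delta_B$, and then plain set theory gives $\theta\cap\zeta\subseteq\Delta_B\cap A^2=\Delta_A$, which dominates $[\theta,\zeta]_A$. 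That half of your argument needs neither the $m$-extension hypothesis nor Corollary \ref{Gammasurj}, so it clarifies which hypotheses do real work and which do not; the paper's chained-equivalence proof hides the asymmetry. One small presentational note: your phrasing \emph{``$\rho$ identifies $[\alpha,\beta]$ with $\alpha\cap\beta$''} is shorthand for the preliminaries' fact that $\rho([\alpha,\beta])=\rho(\alpha\cap\beta)$ together with semiprimeness; you spelled this out correctly, but since it is the crux of the shortcut it merits a one-line citation of the relevant item in Section \ref{rescglat}.
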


\begin{proof} Since $A$ and $B$ are semiprime, that is $\Delta _A=\rho _A(\Delta _A)=\bigcap {\rm Min}(A)$ and $\Delta _B=\rho _B(\Delta _B)=\bigcap {\rm Min}(B)$, we have:

$[Cg_B(\theta ),Cg_B(\zeta )]_B=\Delta _B$  iff $[Cg_B(\theta ),Cg_B(\zeta )]_B\subseteq \nu $ for all $\nu \in {\rm Min}(B)$ iff, for all $\nu \in {\rm Min}(B)$, $Cg_B(\theta )\subseteq \nu $ or $Cg_B(\zeta )\subseteq \nu $ iff, for all $\nu \in {\rm Min}(B)$, $\theta \subseteq \nu $ or $\zeta \subseteq \nu $ iff, for all $\nu \in {\rm Min}(B)$, $\theta \subseteq \nu \cap \nabla _A$ or $\zeta \subseteq \nu \cap \nabla _A$; by Corollary \ref{Gammasurj}, the latter is equivalent to: for all $\mu \in {\rm Min}(A)$, $\theta \subseteq \mu $ or $\zeta \subseteq \mu $, which in turn is equivalent to the fact that $[\theta ,\zeta ]_A\subseteq \mu $ for all $\mu \in {\rm Min}(A)$, that is $[\theta ,\zeta ]_A=\Delta _A$.\end{proof}

\begin{proposition} If $A\subseteq B$ is an admissible $m$--extension, then, for any $\theta \in {\rm Con}(A)$: $\theta ^{\perp A}=\theta ^{\perp B}\cap \nabla _A$ and $\theta ^{\perp B}=Cg_B(\theta ^{\perp A})$.\label{intersperp}\end{proposition}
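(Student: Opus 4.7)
The plan is to use Lemma \ref{eqcomm} to shuttle between commutators in $A$ and $B$, together with Proposition \ref{charmincg} and the surjection $\Gamma\colon {\rm Min}(B)\to {\rm Min}(A)$ furnished by Corollary \ref{Gammasurj}, to compare the congruences in question through their minimal primes.

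For the first equation $\theta^{\perp A}=\theta^{\perp B}\cap \nabla_A$: since $[\theta^{\perp A},\theta]_A=\Delta_A$, Lemma \ref{eqcomm} gives $[Cg_B(\theta^{\perp A}),Cg_B(\theta)]_B=\Delta_B$, so $Cg_B(\theta^{\perp A})\subseteq \theta^{\perp B}$; restricting to $\nabla_A$ yields $\theta^{\perp A}\subseteq \theta^{\perp B}\cap \nabla_A$. For the reverse inclusion, set $\gamma:=\theta^{\perp B}\cap \nabla_A\in {\rm Con}(A)$; then $Cg_B(\gamma)\subseteq \theta^{\perp B}$, so $[Cg_B(\gamma),Cg_B(\theta)]_B=\Delta_B$, and Lemma \ref{eqcomm} yields $[\gamma,\theta]_A=\Delta_A$, i.e.\ $\gamma\subseteq \theta^{\perp A}$.

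For the second equation, $Cg_B(\theta^{\perp A})\subseteq \theta^{\perp B}$ is already in hand. For the reverse inclusion I would first match the two sides on minimal primes. By Proposition \ref{charmincg} applied in $B$: for any $\mu\in {\rm Min}(B)$, $\theta^{\perp B}\subseteq \mu$ iff $\theta\nsubseteq \mu$; since $\theta\subseteq \nabla_A$, this is equivalent to $\theta\nsubseteq \mu\cap \nabla_A=\Gamma(\mu)$, and Proposition \ref{charmincg} in $A$ applied to $\Gamma(\mu)\in {\rm Min}(A)$ reformulates it as $\theta^{\perp A}\subseteq \Gamma(\mu)$, equivalently $Cg_B(\theta^{\perp A})\subseteq \mu$. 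Hence $V_B(\theta^{\perp B})\cap {\rm Min}(B)=V_B(Cg_B(\theta^{\perp A}))\cap {\rm Min}(B)$, and Lemma \ref{negradthus} applied to the radical congruence $\theta^{\perp B}$ (Lemma \ref{negrad}) yields $\theta^{\perp B}=\rho_B(Cg_B(\theta^{\perp A}))$.

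The main obstacle is then upgrading this to the literal equality $\theta^{\perp B}=Cg_B(\theta^{\perp A})$, i.e.\ showing that $Cg_B(\theta^{\perp A})$ is already radical in $B$, or equivalently that $\theta^{\perp B}$ is generated, as a $B$-congruence, by its restriction to $\nabla_A$. My plan for this last step is to take $\alpha\in {\rm Con}(B)$ with $[\alpha,\alpha]_B^n\subseteq Cg_B(\theta^{\perp A})$ for some $n\in \N^*$ (cf.\ Lemma \ref{charsemiprime}); radicality of $\theta^{\perp B}$ combined with the first part gives $\alpha\subseteq \theta^{\perp B}$, so $\alpha\cap \nabla_A\subseteq \theta^{\perp A}$, and a further application of Lemma \ref{eqcomm} together with the $m$-extension hypothesis should then allow us to descend back to $\alpha\subseteq Cg_B(\theta^{\perp A})$. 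I expect this final descent to be the subtlest step, as it is precisely where the full strength of the $m$-extension property -- not just the surjectivity of $\Gamma$ -- must be brought in.
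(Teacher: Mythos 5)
Your argument for the first equation is correct and essentially matches the paper's proof of that half: both sides of the inclusion are obtained by passing commutator triviality back and forth through Lemma \ref{eqcomm}. The easy inclusion $Cg_B(\theta^{\perp A})\subseteq\theta^{\perp B}$ is also correct.

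For the hard inclusion $\theta^{\perp B}\subseteq Cg_B(\theta^{\perp A})$ there is a genuine gap, which you acknowledge but whose proposed repair does not close it. Your minimal-prime computation does give $V_B(\theta^{\perp B})\cap{\rm Min}(B)=V_B(Cg_B(\theta^{\perp A}))\cap{\rm Min}(B)$ and, via Lemma \ref{negradthus} together with the already-known inclusion $Cg_B(\theta^{\perp A})\subseteq\theta^{\perp B}$ and radicality of $\theta^{\perp B}$, one can indeed deduce $\theta^{\perp B}=\rho_B(Cg_B(\theta^{\perp A}))$. But the final descent is exactly where your sketch stalls: from $[\alpha,\alpha]_B^n\subseteq Cg_B(\theta^{\perp A})$ you correctly obtain $\alpha\subseteq\theta^{\perp B}$ and $\alpha\cap\nabla_A\subseteq\theta^{\perp A}$, and hence $Cg_B(\alpha\cap\nabla_A)\subseteq Cg_B(\theta^{\perp A})$; however, this does not yield $\alpha\subseteq Cg_B(\theta^{\perp A})$, because in general $\alpha\nsubseteq Cg_B(\alpha\cap\nabla_A)$ --- a congruence of $B$ can contain pairs outside $A^2$ that are not recoverable from its restriction to $A$. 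Lemma \ref{eqcomm} is of no help at this point, since it only transports commutator identities between congruences of $A$ into $B$; it does not identify a $B$-congruence with the one generated over $B$ by its $A$-part. So the proposal does not establish the proposition. For comparison, the paper takes a different route for this inclusion: it works directly with the $\max$-characterisations $\theta^{\perp B}=\max\{\beta\in{\rm Con}(B)\mid[\beta,Cg_B(\theta)]_B=\Delta_B\}$ and $Cg_B(\theta^{\perp A})=\max\{Cg_B(\alpha)\mid\alpha\in{\rm Con}(A),[\alpha,\theta]_A=\Delta_A\}$, again funnelling the commutator data through Lemma \ref{eqcomm}, rather than passing through minimal primes.
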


\begin{proof} By Lemma \ref{eqcomm} we have, for any $u,v\in A$: $(u,v)\in \theta ^{\perp A}$ iff $[Cg_A(u,v),\theta ]_A=\Delta _A$ iff $[Cg_B(Cg_A(u,v)),Cg_B(\theta )]_B=[Cg_B(u,v),Cg_B(\theta )]_B=\Delta _B$ iff $(u,v)\in \theta ^{\perp B}$ iff $(u,v)\in \theta ^{\perp B}\cap \nabla _A$.

Again by Lemma \ref{eqcomm}, we have, for any $\beta \in {\rm Con}(B)$: $\theta ^{\perp B}=Cg_B(\theta )^{\perp B}=\max \{\beta \in {\rm Con}(B)\ |\ [\beta ,Cg_B(\theta )]_B=\Delta _B\}\subseteq \max \{Cg_B(\alpha )\ |\ \alpha \in {\rm Con}(A),[Cg_B(\alpha ),Cg_B(\theta )]_B=\Delta _B\}=\max \{Cg_B(\alpha )\ |\ \alpha \in {\rm Con}(A),\alpha ,\theta ]_A=\Delta _A\}=Cg_B(\max \{\alpha \in {\rm Con}(A)\ |\ \alpha ,\theta ]_A=\Delta _A\}=Cg_B(\theta ^{\perp A})$; on the other hand: $[\theta ,\theta ^{\perp A}]_A=\Delta _A$, thus $[Cg_B(\theta ),Cg_B(\theta ^{\perp A})]_B=\Delta _B$, so $Cg_B(\theta ^{\perp A})\subseteq Cg_B(\theta )^{\perp B}=\theta ^{\perp B}$. Therefore $Cg_B(\theta ^{\perp A})=\theta ^{\perp B}$.\end{proof}

\begin{corollary} If $A\subseteq B$ is an admissible $m$--extension, then, for any $\theta ,\zeta \in {\rm Con}(A)$: $\theta ^{\perp A}=\zeta ^{\perp A}$ iff $\theta ^{\perp B}=\zeta ^{\perp B}$.\label{eqperp}\end{corollary}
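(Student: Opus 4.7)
The plan is to derive this corollary directly from Proposition \ref{intersperp}, which gives the two key identities $\theta^{\perp A}=\theta^{\perp B}\cap\nabla_A$ and $\theta^{\perp B}=Cg_B(\theta^{\perp A})$ (and analogously for $\zeta$). Both implications then follow by a one-line substitution argument, so there is essentially no hard step to speak of.

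For the forward direction, I would assume $\theta^{\perp A}=\zeta^{\perp A}$ and apply $Cg_B(\cdot)$ to both sides, using the second identity of Proposition \ref{intersperp} to conclude $\theta^{\perp B}=Cg_B(\theta^{\perp A})=Cg_B(\zeta^{\perp A})=\zeta^{\perp B}$. For the reverse direction, I would assume $\theta^{\perp B}=\zeta^{\perp B}$ and intersect both sides with $\nabla_A$, using the first identity of Proposition \ref{intersperp} to conclude $\theta^{\perp A}=\theta^{\perp B}\cap\nabla_A=\zeta^{\perp B}\cap\nabla_A=\zeta^{\perp A}$.

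Since the result is a straightforward corollary, there is no real obstacle; the only thing to double-check is that Proposition \ref{intersperp} is applied to both $\theta$ and $\zeta$, which requires no further hypothesis since the standing assumptions of the section (the $m$-extension property together with $A$, $B$ semiprime and their commutators being commutative and distributive w.r.t.\ arbitrary joins) apply uniformly to every congruence of $A$. Thus the proof amounts to little more than citing Proposition \ref{intersperp} twice.
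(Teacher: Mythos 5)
Your proof is correct and is exactly the intended derivation: both directions fall out immediately from the two identities in Proposition~\ref{intersperp}, which is precisely why the paper states this as a corollary with no separate proof.
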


\begin{corollary} If $A\subseteq B$ is an admissible $m$--extension such that $A$ satisfies \hypmuA\ and $B$ satisfies \hyplB\ or \hypmuB , then, for any $\psi \in {\rm Spec}(B)$, we have: $\psi \in {\rm Min}(B)$ iff $\psi \cap \nabla _A\in {\rm Min}(A)$.\label{admMiniff}\end{corollary}

\begin{proof} We have the direct implication by the definition of an $m$--extension.

Now assume that $\psi \cap \nabla _A\in {\rm Min}(A)$ and let $\beta \in {\rm Con}(B)$, arbitrary. Then, by Propositions \ref{charmincg} and \ref{intersperp}: $\beta \subseteq \psi $ implies $\beta \cap\nabla _A\subseteq \psi \cap \nabla _A$, which is equivalent to $(\beta \cap\nabla _A)^{\perp A}\nsubseteq \psi \cap \nabla _A$, hence $(\beta \cap\nabla _A)^{\perp A}\nsubseteq \psi $, thus $Cg_B(\beta \cap\nabla _A)^{\perp B}=(\beta \cap\nabla _A)^{\perp B}=Cg_B((\beta \cap\nabla _A)^{\perp A})\nsubseteq \psi $, hence $[\beta ,\psi ]_B\supseteq [Cg_B(\beta \cap\nabla _A),\psi ]_B\supsetneq \Delta _B$ since $\beta \supseteq Cg_B(\beta \cap\nabla _A)$, thus $\beta ^{\perp B}\nsubseteq \psi $. Therefore, again by Proposition \ref{charmincg}, $\psi \in {\rm Min}(B)$.\end{proof}

\begin{remark} Note from the proof of Corollary \ref{admMiniff} that, if $A\subseteq B$ is an admissible $m$--extension such that $A$ satisfies \hypmuA\ and $B$ satisfies \hyplB\ or \hypmuB , then $B$ satisfies the equivalence of all statements (\ref{1phiMin}), (\ref{2K}), (\ref{3K}), (\ref{2Con}) and (\ref{3Con}) in Proposition \ref{charmincg}.\end{remark}

By extending the terminology for ring extensions from \cite{bhadremcgov}, we call $A\subseteq B$:\begin{itemize}
\item a {\em rigid}, respectively {\em quasirigid}, respectively {\em weak rigid extension} iff, for any $\beta \in {\rm PCon}(B)$, there exists an $\alpha \in {\rm PCon}(A)$, respectively an $\alpha \in {\cal K}(A)$, respectively an $\alpha \in {\rm Con}(A)$ such that $\alpha ^{\perp B}=\beta ^{\perp B}$;
\item an {\em $r$--extension}, respectively a {\em quasi $r$--extension}, respectively {\em weak $r$--extension} iff, for any $\mu \in {\rm Min}(B)$ and any $\beta \in {\rm PCon}(B)$ such that $\beta \nsubseteq \mu $, there exists an $\alpha \in {\rm PCon}(A)$, respectively an $\alpha \in {\cal K}(A)$, respectively an $\alpha \in {\rm Con}(A)$ such that $\alpha \nsubseteq \mu $ and $\beta ^{\perp B}\subseteq \alpha ^{\perp B}$;
\item an {\em $r^*$--extension}, respectively a {\em quasi $r^*$--extension}, respectively a {\em weak $r^*$--extension} iff, for any $\mu \in {\rm Min}(B)$ and any $\beta \in {\rm PCon}(B)$ such that $\beta \subseteq \mu $, there exists an $\alpha \in {\rm PCon}(A)$, respectively an $\alpha \in {\cal K}(A)$, respectively an $\alpha \in {\rm Con}(A)$ such that $\alpha \subseteq \mu $ and $\alpha ^{\perp B}\subseteq \beta ^{\perp B}$.\end{itemize}

\begin{remark} If $A\subseteq B$ is admissible or an $m$--extension, then, since any $\alpha \in {\rm Con}(A)$ and $\mu \in {\rm Con}(B)$ satisfy the equivalence $\alpha \subseteq \mu $ iff $\alpha \subseteq \mu \cap \nabla _A$, thus also the equivalence $\alpha \nsubseteq \mu $ iff $\alpha \nsubseteq \mu \cap \nabla _A$, it follows that $A\subseteq B$ is:\begin{itemize}
\item an $r$--extension, respectively a quasi $r$--extension, respectively weak $r$--extension iff, for any $\beta \in {\rm PCon}(B)$, $\{\mu \cap \nabla _A\ |\ \mu \in D_B(\beta )\cap {\rm Min}(B)\}\subseteq \bigcup \{D_A(\alpha )\ |\ \alpha \in M,\beta ^{\perp B}\subseteq \alpha ^{\perp B}\}=D_A(\bigvee \{\alpha \in M\ |\ \beta ^{\perp B}\subseteq \alpha ^{\perp B}\})$, where $M$ is equal to ${\rm PCon}(A)$, respectively ${\cal K}(A)$, respectively ${\rm Con}(A)$;
\item an $r^*$--extension, respectively a quasi $r^*$--extension, respectively a weak $r^*$--extension iff, for any $\beta \in {\rm PCon}(B)$, $\{\mu \cap \nabla _A\ |\ \mu \in V_B(\beta )\cap {\rm Min}(B)\}\subseteq \bigcup \{V_A(\alpha )\ |\ \alpha \in M,\alpha ^{\perp B}\subseteq \beta ^{\perp B}\}$, where $M$ is equal to ${\rm PCon}(A)$, respectively ${\cal K}(A)$, respectively ${\rm Con}(A)$;\end{itemize}
thus, if $A\subseteq B$ is an $m$--extension, then $A\subseteq B$ is:\begin{itemize}
\item an $r$--extension, respectively a quasi $r$--extension, respectively weak $r$--extension iff, for any $\beta \in {\rm PCon}(B)$, $\Gamma (D_B(\beta )\cap {\rm Min}(B))\subseteq \bigcup \{D_A(\alpha )\ |\ \alpha \in M,\beta ^{\perp B}\subseteq \alpha ^{\perp B}\}=D_A(\bigvee \{\alpha \in M\ |\ \beta ^{\perp B}\subseteq \alpha ^{\perp B}\})$, where $M$ is equal to ${\rm PCon}(A)$, respectively ${\cal K}(A)$, respectively ${\rm Con}(A)$;
\item an $r^*$--extension, respectively a quasi $r^*$--extension, respectively a weak $r^*$--extension iff, for any $\beta \in {\rm PCon}(B)$, $\Gamma (V_B(\beta )\cap {\rm Min}(B))\subseteq \bigcup \{V_A(\alpha )\ |\ \alpha \in M,\alpha ^{\perp B}\subseteq \beta ^{\perp B}\}$, where $M$ is equal to ${\rm PCon}(A)$, respectively ${\cal K}(A)$, respectively ${\rm Con}(A)$.\end{itemize}\label{charext}\end{remark}

\begin{remark} Note from Lemma \ref{fcg} that, for any set $I$ and any $\{a_i,b_i\ |\ i\in I\}\subseteq A$, $Cg_B(Cg_A(\{(a_i,b_i)\ |\ i\in I\}))=Cg_B(\{(a_i,b_i)\ |\ i\in I\})$, hence, for any $\alpha \in {\rm PCon}(A)$, $\beta \in {\cal K}(A)$ and $\gamma \in {\rm Con}(A)$, it follows that $Cg_B(\alpha )\in {\rm PCon}(B)$, $Cg_B(\beta )\in {\cal K}(B)$ and $Cg_B(\gamma )\in {\rm Con}(B)$.\label{cggenext}\end{remark}

\begin{proposition} If $A\subseteq B$ is an $m$--extension, then:\begin{enumerate}
\item\label{rigid2} if $B$ satisfies \hypmuB\ and $A\subseteq B$ is a weak rigid extension, then it is both a weak $r$--extension and a weak $r^*$--extension;
\item\label{rigid0} if $B$ satisfies \hyplB\ or \hypmuB\ and $A\subseteq B$ is a quasirigid extension, then it is both a quasi $r$--extension and a quasi $r^*$--extension;
\item\label{rigid1} if $B$ satisfies \hyplB\ or \hypmuB\ and $A\subseteq B$ is a rigid extension, then it is both an $r$--extension and an $r^*$--extension.
\end{enumerate}\label{rigid}\end{proposition}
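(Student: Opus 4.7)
The three statements follow a single uniform argument, differing only in which class (principal, compact, or arbitrary) the congruence $\alpha$ of $A$ is drawn from, so I would prove them together. The central ingredient is Proposition \ref{charmincg} applied to the semiprime algebra $B$: for every $\mu \in {\rm Min}(B)$ and every $\gamma \in {\rm Con}(B)$,
$$\gamma \subseteq \mu \iff \gamma^{\perp B} \nsubseteq \mu.$$
I would first extend this dichotomy to congruences of $A$: for $\alpha \in {\rm Con}(A)$, the inclusion $\alpha \subseteq \mu$ (understood as inclusion of subsets of $B^2$) is equivalent to $Cg_B(\alpha) \subseteq \mu$, and by the notational convention fixed at the start of the section, $\alpha^{\perp B} = Cg_B(\alpha)^{\perp B}$. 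Thus the equivalence reads
$$\alpha \subseteq \mu \iff \alpha^{\perp B} \nsubseteq \mu \qquad (\alpha \in {\rm Con}(A),\ \mu \in {\rm Min}(B)).$$

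With this in hand, fix $\mu \in {\rm Min}(B)$ and $\beta \in {\rm PCon}(B)$ and apply the rigidity hypothesis to obtain $\alpha$ (of the class appropriate to the case being proved: $\alpha \in {\rm Con}(A)$ in (\ref{rigid2}), $\alpha \in \mathcal{K}(A)$ in (\ref{rigid0}), $\alpha \in {\rm PCon}(A)$ in (\ref{rigid1})) such that $\alpha^{\perp B} = \beta^{\perp B}$. For the $r$-type conclusion, assume $\beta \nsubseteq \mu$; by Proposition \ref{charmincg} applied to $\beta \in {\rm Con}(B)$ we have $\beta^{\perp B} \subseteq \mu$, so $\alpha^{\perp B} \subseteq \mu$, and the displayed equivalence above forces $\alpha \nsubseteq \mu$; together with the trivial inclusion $\beta^{\perp B} \subseteq \alpha^{\perp B}$, this is exactly the defining condition of the $r$-type extension. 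For the $r^*$-type conclusion, assume symmetrically $\beta \subseteq \mu$, obtain $\beta^{\perp B} \nsubseteq \mu$, hence $\alpha^{\perp B} \nsubseteq \mu$, hence $\alpha \subseteq \mu$, and again $\alpha^{\perp B} \subseteq \beta^{\perp B}$ is immediate from the equality.

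The only minor subtlety, and what I would flag as the main point to get right, is the bookkeeping between $\alpha$ sitting in ${\rm Con}(A)$ and $Cg_B(\alpha)$ sitting in ${\rm Con}(B)$: one must check that the characterisation of minimal primes in $B$ transfers to congruences of $A$ through $Cg_B(-)$, which is immediate once one unfolds the notational conventions but is the only place where the semiprimeness of $B$ and the structure of the extension are actually used. The assumption that $A \subseteq B$ be an $m$-extension is not explicitly needed in this short argument (it ensures, via Proposition \ref{intersperp} and Corollary \ref{eqperp}, the coherence of the $\perp A$ and $\perp B$ operations which underlie the overarching theory, but for this particular proposition one only needs the semiprimeness of $B$ to invoke Proposition \ref{charmincg}). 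No obstacle beyond this routine translation is anticipated.
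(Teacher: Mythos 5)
Your proof is correct and takes essentially the same route as the paper: both reduce everything to Proposition \ref{charmincg} applied in $B$ to $\mu\in{\rm Min}(B)$, after identifying $\alpha\subseteq\mu$ with $Cg_B(\alpha)\subseteq\mu$ and using the notational convention $\alpha^{\perp B}=Cg_B(\alpha)^{\perp B}$. Your side-observation that the $m$--extension hypothesis is not actually invoked in this particular argument, only the semiprimeness of $B$, is also accurate and is visible in the paper's own proof.
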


\begin{proof} (\ref{rigid0}) Assume that $A\subseteq B$ is a quasirigid extension and let $\mu \in {\rm Min}(B)$ and $\beta \in {\rm PCon}(B)$, so that $Cg_B(\alpha )^{\perp B}=\beta ^{\perp B}$ for some $\alpha \in {\cal K}(A)$, hence, according to Proposition \ref{charmincg} and Remark \ref{cggenext}:\begin{itemize}
\item $\beta \nsubseteq \mu $ implies $Cg_B(\alpha )^{\perp B}=\beta ^{\perp B}\subseteq \mu $, thus $Cg_B(\alpha )\nsubseteq \mu $, hence $\alpha \nsubseteq \mu $;
\item $\beta \subseteq \mu $ implies $Cg_B(\alpha )^{\perp B}=\beta ^{\perp B}\nsubseteq \mu $, thus $\alpha \subseteq Cg_B(\alpha )\subseteq \mu $.\end{itemize}

\noindent (\ref{rigid2}) and (\ref{rigid1}) Analogously.\end{proof}

\begin{proposition} If $A\subseteq B$ is an $m$--extension, then:\begin{enumerate}
\item\label{Gammacont0} $\Gamma $ is continuous w.r.t. the Stone topologies and the inverse topologies;
\item\label{Gammacont1} if \hyplA ,\hypmuA ,\hyplB\ or \hypmuB\ is satisfied, then $\Gamma :{\cal M}in(B)\rightarrow {\cal M}in(A)^{-1}$ is continuous.
\item\label{Gammacont2} if $A$ satisfies \hypcpA , along with one of \hyplA\ or \hypmuA , or $B$ satisfies \hypcpB , along with one of \hyplB\ or \hypmuB , then $\Gamma :{\cal M}in(B)^{-1}\rightarrow {\cal M}in(A)$ is continuous.\end{enumerate}\label{Gammacont}\end{proposition}

\begin{proof} Let $\alpha \in {\rm Con}(A)$, so that $D_A(\alpha )\cap {\rm Min}(A)$ is an arbitrary open set in ${\cal M}in(A)$, $D_B(Cg_B(\alpha ))\cap {\rm Min}(B)$ is an open set in ${\cal M}in(B)$ and, if $\alpha \in {\cal K}(A)$, so that $Cg_B(\alpha )\in {\cal K}(B)$, then $V_A(\alpha )\cap {\rm Min}(A)$ is an arbitrary basic open set in ${\cal M}in(A)^{-1}$ and $V_B(Cg_B(\alpha ))\cap {\rm Min}(B)$ is a basic open set in ${\cal M}in(B)^{-1}$.

Since $A\subseteq B$ is an $m$--extension, we have, for all $\nu \in {\rm Min}(B)$: $\nu \cap \nabla _A\in {\rm Min}(A)$, thus:

$\nu \in \Gamma ^{-1}(V_A(\alpha )\cap {\rm Min}(A))$ iff $\nu \cap \nabla _A\in V_A(\alpha )\cap {\rm Min}(A))=[\alpha )\cap {\rm Min}(A)$ iff $\nu \cap \nabla _A\in [\alpha )$ iff $\alpha \subseteq \nu \cap \nabla _A$ iff $\alpha \subseteq \nu $ iff $Cg_B(\alpha )\subseteq \nu $ iff $\nu \in V_B(Cg_B(\alpha ))\cap {\rm Min}(B)$; hence $\Gamma ^{-1}(V_A(\alpha )\cap {\rm Min}(A))=V_B(Cg_B(\alpha ))\cap {\rm Min}(B)$;

similarly, $\nu \in \Gamma ^{-1}(D_A(\alpha )\cap {\rm Min}(A))$ iff $\alpha \nsubseteq \nu \cap \nabla _A$ iff $\alpha \nsubseteq \nu $ iff $Cg_B(\alpha )\nsubseteq \nu $ iff $\nu \in D_B(Cg_B(\alpha ))\cap {\rm Min}(B)$; hence $\Gamma ^{-1}(D_A(\alpha )\cap {\rm Min}(A))=D_B(Cg_B(\alpha ))\cap {\rm Min}(B)$.

\noindent (\ref{Gammacont0}) Hence $\Gamma :{\cal M}in(B)\rightarrow {\cal M}in(A)$ and $\Gamma :{\cal M}in(B)^{-1}\rightarrow {\cal M}in(A)^{-1}$  are continuous.

\noindent (\ref{Gammacont1}) Assume that $\alpha \in {\cal K}(A)$.

If \hyplA\ or \hypmuA hold, then, by Proposition \ref{charmincg}: $\alpha \subseteq \nu \cap \nabla _A$ iff $\alpha ^{\perp A}\nsubseteq \nu \cap \nabla _A$ iff $\alpha ^{\perp A}\nsubseteq \nu $ iff $Cg_B(\alpha ^{\perp A})\nsubseteq \nu $ iff $\nu \in D_B(Cg_B(\alpha ^{\perp A}))\cap {\rm Min}(B)$; hence $\Gamma ^{-1}(V_A(\alpha )\cap {\rm Min}(A))=D_B(Cg_B(\alpha ^{\perp A}))\cap {\rm Min}(B)$.

If \hyplB\ or \hypmuB hold, then, by Proposition \ref{charmincg}: $Cg_B(\alpha )\subseteq \nu $ iff $Cg_B(\alpha )^{\perp B}\nsubseteq \nu $ iff  $\nu \in D_B(Cg_B(\alpha )^{\perp B})\cap {\rm Min}(B)$; hence $\Gamma ^{-1}(V_A(\alpha )\cap {\rm Min}(A))=D_B(Cg_B(\alpha )^{\perp B})\cap {\rm Min}(B)$.

Hence, in either of these cases, $\Gamma :{\cal M}in(B)\rightarrow {\cal M}in(A)^{-1}$ is continuous.

\noindent (\ref{Gammacont2}) Analogous to the proof of (\ref{Gammacont1}) or simply by applying (\ref{Gammacont0}), (\ref{Gammacont1}) and Proposition \ref{coarse}.(\ref{coarse2}).\end{proof}


\begin{proposition} If $A\subseteq B$ is an admissible quasi $r$--extension and $B$ satisfies \hyplB\ or \hypmuB , then:\begin{itemize}
\item $A\subseteq B$ is an $m$--extension;
\item if, furthermore, $A$ satisfies \hyplA\ or \hypmuA , then $\Gamma $ is a bijection.\end{itemize}\label{rGammabij}\end{proposition}

\begin{proof} Assume that $A\subseteq B$ is an admissible quasi $r$--extension and $B$ satisfies \hyplB\ or \hypmuB .

Assume by absurdum that there exists a $\nu \in {\rm Min}(B)$ with $\nu \cap \nabla _A\notin {\rm Min}(A)$, so that $\nu \cap \nabla _A\in {\rm Spec}(A)\setminus {\rm Min}(A)$ since ${\rm Min}(B)\subseteq {\rm Spec}(B)$ and $A\subseteq B$ is admissible, hence there exists $\mu \in {\rm Min}(A)$ such that $\mu \subsetneq \nu \cap \nabla _A$.

Since $A\subseteq B$ is admissible, by Corollary \ref{Gammasurj} it follows that $\mu =\varepsilon \cap \nabla _A$ for some $\varepsilon \in {\rm Min}(B)$. Thus $\varepsilon \cap \nabla _A=\mu \subsetneq \nu \cap \nabla _A$, therefore $\varepsilon $ and $\nu $ are distinct minimal prime congruences of $B$, hence they are incomparable, thus $\varepsilon \setminus \nu \neq \emptyset $, so that $(x,y)\in \varepsilon \setminus \nu $ for some $x,y\in B$. 

Then $Cg_B(x,y)\nsubseteq \nu $, so that, since $A\subseteq B$ is a quasi $r$--extension, there exists an $\alpha \in {\cal K}(A)$ such that $\alpha \nsubseteq \nu $ and $Cg_B(x,y)^{\perp B}\subseteq \alpha ^{\perp B}=Cg_B(\alpha )^{\perp B}$. Then $Cg_B(\alpha )\in {\cal K}(B)$ and $\alpha \nsubseteq \nu $, thus $Cg_B(\alpha )\nsubseteq \mu $, hence $Cg_B(\alpha )^{\perp B}\subseteq \nu $ by Proposition \ref{charmincg} and the fact that $B$ satisfies \hyplB\ or \hypmuB .

Also, $Cg_B(x,y)\subseteq \varepsilon $, thus, again by Proposition \ref{charmincg} and the fact that $B$ satisfies \hyplB\ or \hypmuB , $Cg_B(\alpha )^{\perp B}\supseteq Cg_B(x,y)^{\perp B}\nsubseteq \varepsilon $, hence $\alpha \subseteq Cg_B(\alpha )\subseteq \varepsilon $, thus $\alpha \subseteq \varepsilon \cap \nabla _A=\mu \subset \nu \cap \nabla _A\subseteq \nu $, hence $Cg_B(\alpha )\subseteq \nu $, so that $Cg_B(\alpha )^{\perp B}\nsubseteq \nu $, which contradicts the above.

Therefore $A\subseteq B$ is an $m$--extension, hence $\Gamma $ is surjective by Corollary \ref{Gammasurj} and the admissibility of $A\subseteq B$.

Now let $\phi ,\psi \in {\rm Min}(B)$ such that $\Gamma (\phi )=\Gamma (\psi )$, that is $\phi \cap \nabla _A=\psi \cap \nabla _A$, and assume by absurdum that $\phi \neq \psi $, so that $\phi \setminus \psi \neq \emptyset $, that is $(u,v)\in \phi \setminus \psi $ for some $u,v\in B$, which thus satisfy $Cg_B(u,v)\subseteq \phi $ and $Cg_B(u,v)\nsubseteq \psi $.

As above, it follows that there exists a $\gamma \in {\cal K}(A)$ such that $\gamma \nsubseteq \psi $ and $Cg_B(u,v)^{\perp B}\subseteq \gamma ^{\perp B}=Cg_B(\gamma ^{\perp A})$ by Proposition \ref{intersperp}, while $Cg_B(u,v)^{\perp B}\nsubseteq \phi $, hence $Cg_B(\gamma ^{\perp A})\nsubseteq \phi $, thus $\gamma ^{\perp A}\nsubseteq \phi $. But then $\gamma \nsubseteq \psi \cap \nabla _A\in {\rm Min}(A)$ and $\gamma ^{\perp A}\nsubseteq \phi \cap \nabla _A\in {\rm Min}(A)$. If $A$ satisfies \hyplA\ or \hypmuA , then this implies $\gamma \subseteq \phi \cap \nabla _A$ by Proposition \ref{charmincg}.

Hence $\gamma \subseteq \phi \cap \nabla _A=\psi \cap \nabla _A\nsupseteq \gamma $; a contradiction. Therefore $\Gamma $ is injective.\end{proof}

\begin{proposition} If $A\subseteq B$ is an admissible quasi $r^{\ast}$--extension and $B$ satisfies \hyplB\ or \hypmuB , then:\begin{itemize}
\item $A\subseteq B$ is an $m$--extension;
\item if, furthermore, $A$ satisfies \hyplA\ or \hypmuA , then $\Gamma $ is a bijection.\end{itemize}\label{r*Gammabij}\end{proposition}

\begin{proof} Similar to the proof of Proposition \ref{rGammabij}.\end{proof}

\begin{theorem} If $A\subseteq B$ is an admissible extension such that $A$ satisfies \hyplA\ or \hypmuA\ and $B$ satisfies \hyplB\ or \hypmuB, then the following are equivalent:\begin{enumerate}
\item\label{GammahomStone1} $A\subseteq B$ is an $r$--extension;
\item\label{GammahomStone2} $A\subseteq B$ is a quasi $r$--extension;
\item\label{GammahomStone3} $\Gamma:{\cal M}in(B)\rightarrow {\cal M}in(A)$ is a homeomorphism.\end{enumerate}\label{GammahomStone}\end{theorem}

\begin{proof} (\ref{GammahomStone1})$\Rightarrow $(\ref{GammahomStone2}): Trivial.

\noindent (\ref{GammahomStone2})$\Rightarrow $(\ref{GammahomStone3}): If $A\subseteq B$ is an admissible quasi $r$--extension such that $A$ satisfies \hyplA\ or \hypmuA\ and $B$ satisfies \hyplB\ or \hypmuB, then, by Propositions \ref{rGammabij} and \ref{Gammacont}, it follows that $A\subseteq B$ is an $m$--extension and $\Gamma :{\cal M}in(B)\rightarrow {\cal M}in(A)$ is a continuous bijection.

Let $\beta \in {\rm PCon}(B)$, arbitrary, so that $\Gamma (D_B(\beta )\cap {\rm Min}(B))=\{\psi \cap \nabla _A\ |\  \psi \in D_B(\beta )\cap {\rm Min}(B)\}=\{\psi \cap \nabla _A\ |\  \psi \in {\rm Min}(B), \beta \nsubseteq \psi \}$.

By Remark \ref{charext} and the fact that $A\subseteq B$ is a quasi $r$--extension and an $m$--extension, $\Gamma (D_B(\beta )\cap {\rm Min}(B))\subseteq D_A(\bigvee \{\alpha \in {\cal K}(A)\ |\ \beta ^{\perp B}\subseteq \alpha ^{\perp B}\})\cap {\rm Min}(A)$.

Now let $\phi \in D_A(\bigvee \{\alpha \in {\cal K}(A)\ |\ \beta ^{\perp B}\subseteq \alpha ^{\perp B}\})\cap {\rm Min}(A)=(\bigcup \{D_A(\alpha )\ |\ \alpha \in {\cal K}(A),\beta ^{\perp B}\subseteq \alpha ^{\perp B}\})\cap {\rm Min}(A)=\bigcup \{D_A(\alpha )\cap {\rm Min}(A)\ |\ \alpha \in {\cal K}(A),\beta ^{\perp B}\subseteq \alpha ^{\perp B}\}$, so that $\phi \in D_A(\alpha )\cap {\rm Min}(A)$ for some $\alpha \in {\cal K}(A)$ such that $\beta ^{\perp B}\subseteq \alpha ^{\perp B}=(Cg_B(\alpha))^{\perp B}$ by Proposition \ref{intersperp}.

By Corollary \ref{Gammasurj}, there exists $\psi \in {\rm Min}(B)$ such that $\Gamma (\psi )=\psi \cap \nabla _A=\phi \nsupseteq \alpha $, thus $\psi \nsupseteq Cg_B(\alpha)$, hence $\psi \supseteq Cg_B(\alpha)^{\perp B}=\beta ^{\perp B}$ and thus $\psi \nsupseteq \beta $ by Proposition \ref{charmincg} and the fact that $B$ satisfies \hyplB\ or \hypmuB , so that $\psi \in D_B(\beta )\cap {\rm Min}(B)$, thus $\phi \in \Gamma (D_B(\beta )\cap {\rm Min}(B))$.

Hence we also have the converse inclusion: $D_A(\bigvee \{\alpha \in {\cal K}(A)\ |\ \beta ^{\perp B}\subseteq \alpha ^{\perp B}\})\cap {\rm Min}(A)\subseteq \Gamma (D_B(\beta )\cap {\rm Min}(B))$, hence $\Gamma (D_B(\beta )\cap {\rm Min}(B))=D_A(\bigvee \{\alpha \in {\cal K}(A)\ |\ \beta ^{\perp B}\subseteq \alpha ^{\perp B}\})\cap {\rm Min}(A)$.

Therefore $\Gamma :{\cal M}in(B)\rightarrow {\cal M}in(A)$ is also open, thus it is a homeomorphism.

\noindent (\ref{GammahomStone3})$\Rightarrow $(\ref{GammahomStone1}): Assume that $\Gamma $ is a homeomorphism w.r.t. the Stone topologies, so that $A\subseteq B$ is an $m$--extension and $\Gamma $ maps basic open sets of ${\cal M}in(B)$ to basic open sets of ${\cal M}in(A)$.

Let $\beta \in {\rm PCon}(B)$, so that $D_B(\beta )\cap {\rm Min}(B)$ is a basic open set of ${\cal M}in(B)$. By the above, there exists $\alpha \in {\rm PCon}(A)$ such that $\{\mu \cap \nabla _A\ |\ \mu \in D_B(\beta )\cap {\rm Min}(B)\}=\Gamma (D_B(\beta )\cap {\rm Min}(B))=D_A(\alpha )\cap {\rm Min}(A)$.

Hence, for all $\mu \in D_B(\beta )\cap {\rm Min}(B)$, that is $\mu \in {\rm Min}(B)$ such that $\beta \nsubseteq \mu $, we have $\mu \cap \nabla _A\in D_A(\alpha )\cap {\rm Min}(A)$, so $\alpha \nsubseteq \mu \cap \nabla _A$, that is $\alpha \nsubseteq \mu $. Since, $A$ satisfies \hyplA\ or \hypmuA\ and $B$ satisfies \hyplB\ or \hypmuB, we have, by Proposition \ref{charmincg}: $\beta ^{\perp B}\subseteq \mu $ and $\alpha ^{\perp B}=Cg_B(\alpha )^{\perp B}\subseteq \mu $.

Now let $\gamma \in {\cal K}(B)$ such that $\gamma \subseteq \beta ^{\perp B}$, arbitrary. Any $\varepsilon\in {\rm Min}(B)$ satisfies the following:

\noindent $\bullet$ if $Cg_B(\alpha )\subseteq \varepsilon $, then $[\gamma ,Cg_B(\alpha )]_B\subseteq \varepsilon $;

\noindent $\bullet$ if $Cg_B(\alpha )\nsubseteq \varepsilon $, then $\alpha \nsubseteq \varepsilon \cap \nabla _A$, that is $\Gamma (\varepsilon )=\varepsilon \cap \nabla_A\in D_A(\alpha )\cap {\rm Min}(A)=\Gamma (D_B(\beta)\cap {\rm Min}(B))$, hence $\varepsilon \in D_B(\beta)\cap {\rm Min}(B)$ since $\Gamma $ is a bijection, thus $\beta \nsubseteq \varepsilon $, so $\beta ^{\perp B}\subseteq \varepsilon $, again by Proposition \ref{charmincg}, thus $\gamma \subseteq \varepsilon $ by the above, hence $[\gamma ,Cg_B(\alpha )]_B\subseteq \varepsilon $.

Hence $[\gamma ,Cg_B(\alpha )]_B\subseteq \bigcap {\rm Min}(B)=\Delta _B$ since $B$ is semiprime, thus $[\gamma ,Cg_B(\alpha )]_B=\Delta _B$, that is $\gamma \subseteq Cg_B(\alpha )^{\perp B}=\alpha ^{\perp B}$. Therefore $\beta ^{\perp B}=\bigvee \{\gamma \in {\cal K}(B)\ |\ \gamma \subseteq \beta ^{\perp B}\}\subseteq \alpha ^{\perp B}$. Hence $A\subseteq B$ is an $r$--extension.\end{proof}

\begin{proposition} If $A\subseteq B$ is an admissible $r$--extension such that $A$ satisfies \hyplA\ or \hypmuA\ and $B$ satisfies \hyplB\ or \hypmuB , then the following are equivalent:\begin{enumerate}
\item\label{rigidext1} $A\subseteq B$ is a rigid extension;
\item\label{rigidext2} $\Gamma$ maps basic open sets of ${\cal M}in(B)$ to basic open sets of ${\cal M}in(A)$.\end{enumerate}\label{rigidext}\end{proposition}

\begin{proof} By Proposition \ref{rGammabij}, $A\subseteq B$ is an $m$--extension and $\Gamma$ is bijective.

\noindent (\ref{rigidext1})$\Rightarrow $(\ref{rigidext2}): Let $\beta \in {\rm PCon}(B)$, so that there exists $\alpha \in {\rm PCon}(A)$ with $\beta ^{\perp B}=\alpha ^{\perp B}=Cg_B(\alpha )^{\perp B}$ since $A\subseteq B$ is rigid.

By Proposition \ref{charmincg}, for any $\nu \in {\rm Min}(B)$, the following equivalences hold: $\beta \nsubseteq \nu $ iff $\beta ^{\perp B}\subseteq \nu $ iff $(Cg_B(\alpha ))^{\perp B}\subseteq \nu $ iff $Cg_B(\alpha )\nsubseteq \nu $ iff $\alpha \nsubseteq \nu $ iff $\alpha \nsubseteq \nu \cap \nabla _A$, therefore, since $\Gamma $ is bijective, we have $\Gamma(D_B(\beta)\cap {\rm Min}(B))=\{\nu \cap \nabla_A\ |\ \nu \in {\rm Min}(B),\beta \nsubseteq \nu \}=\{\mu \in {\rm Min}(A)\ |\ \alpha \nsubseteq \mu \}=D_A(\alpha)\cap {\rm Min}(A)$.

\noindent (\ref{rigidext2})$\Rightarrow $(\ref{rigidext1}): Let $\beta\in {\rm PCon}(B)$. By to the hypothesis of this implication, $\Gamma(D_B(\beta)\cap {\rm Min}(B))=D_A(\alpha )\cap {\rm Min}(A)$ for some $\alpha \in {\rm PCon}(A)$, thus $\{\nu \cap \nabla_A\ |\ \nu \in {\rm Min}(A),\beta \nsubseteq \nu \}=\{\mu \ |\ \mu \in {\rm Min}(A),\alpha \nsubseteq \mu \}$.

By Proposition \ref{charmincg}, it follows that any $\nu \in {\rm Min}(B)$ satisfies: $\beta ^{\perp B}\subseteq \nu $ iff $\beta \nsubseteq \nu $ iff $\alpha \nsubseteq \nu \cap \nabla _A$ iff $\alpha \nsubseteq \nu $  iff $Cg_B(\alpha )\nsubseteq \nu $ iff $(Cg_B(\alpha ))^{\perp B}\subseteq \nu $. As in the proof of the implication (\ref{GammahomStone3})$\Rightarrow $(\ref{GammahomStone1}) from Theorem \ref{GammahomStone}, it follows that $\beta ^{\perp B}=\bigcap V_B(\beta ^{\perp B})=\bigcap V_B(Cg_B(\alpha)^{\perp B})=Cg_B(\alpha )^{\perp B}=\alpha ^{\perp B}$. Hence the extension $A\subseteq B$ is rigid.\end{proof}

\begin{proposition} If $A\subseteq B$ is an admissible $r^*$--extension such that $A$ satisfies \hyplA\ or \hypmuA\ and $B$ satisfies \hyplB\ or \hypmuB , then the following are equivalent:\begin{enumerate}
\item\label{qrigidext1} $A\subseteq B$ is a quasirigid extension;
\item\label{qrigidext2} $\Gamma $ maps basic open sets of ${\cal M}in(B)^{-1}$ to basic open sets of ${\cal M}in(A)^{-1}$.
\end{enumerate}\label{qrigidext}\end{proposition}

\begin{proof} By Proposition \ref{r*Gammabij}, $A\subseteq B$ is an $m$--extension and $\Gamma $ is bijective.

\noindent (\ref{qrigidext1})$\Rightarrow $(\ref{qrigidext2}): Let $\beta\in {\cal K}(B)$, so that $\beta =\bigvee_{i=1}^n\beta _i$ for some $n\in \N ^*$ and some $\beta _1,\ldots ,\beta _n\in {\rm PCon}(B)$. By the hypothesis of this implication, for each $i\in \overline{1,n}$, there exists $\alpha _i\in {\cal K}(A)$ such that $\beta _i^{\perp B}=\alpha _i^{\perp B}=(Cg_B(\alpha _i))^{\perp B}$. 

Analogously to the proof of (\ref{rigidext1})$\Rightarrow $(\ref{rigidext2}) from Proposition \ref{rigidext}, it follows that $\Gamma (V_B(\beta _i)\cap {\rm Min}(B))=V_A(\alpha _i)\cap {\rm Min}(A)$ for all $i\in \overline{1,n}$, hence $\Gamma (V_B(\beta )\cap {\rm Min}(B))=\Gamma (\bigcap _{i=1}^nV_B(\beta _i)\cap {\rm Min}(B))=\bigcap _{i=1}^n\Gamma (V_B(\beta _i)\cap {\rm Min}(B))=\bigcap _{i=1}^n V_A(\alpha _i)\cap {\rm Min}(A)=V_A(\alpha )\cap {\rm Min}(A)$, where $\alpha =\bigvee_{i=1}^n\alpha _i\in {\cal K}(A)$.

\noindent (\ref{qrigidext2})$\Rightarrow $(\ref{qrigidext1}): Similar to the proof of (\ref{rigidext2})$\Rightarrow $(\ref{rigidext1}) in Proposition \ref{rigidext}.\end{proof}

\begin{corollary} If $A\subseteq B$ is an admissible $r$--extension and $r^*$--extension such that $A$ satisfies \hyplA\ or \hypmuA\ and $B$ satisfies \hyplB\ or \hypmuB , then the following are equivalent:\begin{itemize}
\item $A\subseteq B$ is a quasirigid extension;
\item $A\subseteq B$ is a rigid extension.
\end{itemize}\label{corrigidext}\end{corollary}

\begin{proof} By Proposition \ref{r*Gammabij}, $A\subseteq B$ is an $m$--extension and $\Gamma$ is bijective.

Clearly, if the extension $A\subseteq B$ is rigid, then it is quasirigid.

Now assume that $A\subseteq B$ is quasirigid. Then, by Proposition \ref{qrigidext}, $\Gamma $ maps basic open sets of ${\cal M}in(B)^{-1}$ to basic open sets of ${\cal M}in(A)^{-1}$. Since $\Gamma$ is bijective, it follows that $\Gamma $ maps basic open sets of ${\cal M}in(B)$ to basic open sets of ${\cal M}in(A)$, hence $A\subseteq B$ is rigid according to Proposition \ref{rigidext}.\end{proof}

\begin{theorem} If $A\subseteq B$ is an admissible extension such that $A$ satisfies \hyplA\ or \hypmuA\ and $B$ satisfies \hyplB\ or \hypmuB , then the following are equivalent:\begin{enumerate}
\item\label{Gammahomflat1} $A\subseteq B$ is a quasi $r^{\ast}$--extension;
\item\label{Gammahomflat2} $\Gamma: {\cal M}in(A)^{-1}\rightarrow {\cal M}in(B)^{-1}$ is a homeomorphism.\end{enumerate}\label{Gammahomflat}\end{theorem}

\begin{proof} By adapting the proof of Theorem \ref{GammahomStone}.\end{proof}

We say that $A$ satisfies the {\em annihilator condition} ({\rm AC} for short) if for all $\alpha,\beta\in {\rm PCon}(A)$ there exists $\gamma\in {\rm PCon}(A)$ such that $\gamma^{\perp A}=\alpha^{\perp A}\cap \beta^{\perp A}$.

\begin{remark} By Proposition \ref{negmin}.(\ref{negmin2}), $A$ satisfies AC if and only if the family $\{V_A(\alpha)\cap {\rm Min}(A)\ |\ \alpha \in {\rm PCon}(A)\}$ is closed under finite intersections. Thus, for any semiprime algebra $A$ that satisfies AC, the family $\{V_A(\alpha)\cap {\rm Min}(A)\ |\ \alpha \in {\rm PCon}(A)\}$ is a basis for the inverse topology ${\cal F}_{{\rm Min}(A)}$ of ${\rm Min}(A)$.\label{basisAC}\end{remark}

\begin{proposition} Let $A\subseteq B$ be an admissible extension such that $A$ satisfies \hyplA\ or \hypmuA\ and $B$ satisfies \hyplB\ or \hypmuB .\begin{enumerate}
\item\label{ACr*0} If $A$ satisfies AC, then: $A\subseteq B$ is a quasi $r^{\ast}$--extension iff $A\subseteq B$ is an $r^{\ast}$--extension.
\item\label{ACr*1} If $A\subseteq B$ is an $r$--extension and both $A$ and $B$ satisfy AC, then: $A\subseteq B$ is a quasirigid extension iff $A\subseteq B$ is a rigid extension.\end{enumerate}\label{ACr*}\end{proposition}

\begin{proof} (\ref{ACr*0}) Assume that $A\subseteq B$ is a quasi $r^{\ast}$--extension. Then, by Theorem \ref{Gammahomflat}, $\Gamma: {\cal M}in(A)^{-1}\rightarrow {\cal M}in(B)^{-1}$ is a homeomorphism.

Let $\nu \in {\rm Min}(B)$ and $\beta \in {\rm PCon}(B)$ such that $\beta \subseteq\nu $, so that $\nu \in V_B(\beta)\cap {\rm Min}(B)$, thus, by the above, $\nu \cap \nabla _A=\Gamma (\nu )\in \Gamma (V_B(\beta )\cap {\rm Min}(B))$, which, according to Remark \ref{basisAC}, equals $V_A(\alpha )\cap {\rm Min}(A)$ for some $\alpha \in {\rm PCon}(A)$. As in the proof of (\ref{rigidext2})$\Rightarrow $(\ref{rigidext1}) from Proposition \ref{rigidext}, it follows that $\alpha ^{\perp B}\subseteq \beta ^{\perp B}$. Therefore $A\subseteq B$ is an $r^{\ast}$--extension.

The converse implication is trivial.

\noindent (\ref{ACr*1}) By Propositions \ref{rigidext} and \ref{qrigidext} and the clear fact that, in this case, condition (\ref{rigidext2}) from Proposition \ref{rigidext} is equivalent to (\ref{qrigidext2}) from Proposition \ref{qrigidext}.\end{proof}

Let us denote, for any subset $X\subseteq A^2$, by $S(X)=\{\psi\in {\rm Min}(B)\ |\ X\nsubseteq \psi\cap \nabla_A\}$, thus $S(X)=\{\psi\in {\rm Min}(B)\ |\ \psi\cap \nabla_A\in D_A(Cg_A(X))\}$.

\begin{proposition} Let $A\subseteq B$ be an admissible extension with the property that, for any $\theta ,\zeta \in {\rm Con}(A)$, $\theta ^{\perp A}=\zeta ^{\perp A}$ implies $\theta ^{\perp B}=\zeta ^{\perp B}$. Assume that $A$ satisfies \hyplA\ or \hypmuA\ and that $B$ is hyperarchimedean and satisfies \hyplB\ or \hypmuB . Then the following are equivalent:\begin{enumerate}
\item\label{pbBaer1} ${\cal M}in(A)$ is a compact space;
\item\label{pbBaer2} $A\subseteq B$ is an $m$--extension;
\item\label{pbBaer3} For any $\alpha \in {\rm PCon}(A)$ there exists $\beta\in {\cal K}(A)$ such that $S(\beta )={\rm Spec}(B)\setminus S(\alpha )$;
\item\label{pbBaer4} For any $\alpha \in {\rm PCon}(A)$ there exists $\beta \in {\cal K}(A)$ such that $\beta \subseteq \alpha ^{\perp A}$ and $(\alpha \vee \beta )^{\perp A}=\Delta _A$.\end{enumerate}\label{pbBaer}\end{proposition}

\begin{proof} (\ref{pbBaer1})$\Leftrightarrow $(\ref{pbBaer4}) By Theorem \ref{charMincp}.

\noindent (\ref{pbBaer4})$\Rightarrow $(\ref{pbBaer3}): Assume that $\alpha\in {\rm PCon}(A)$. By the hypothesis (\ref{pbBaer4}), there exists $\beta\in {\cal K}(A)$ such that $\beta\subseteq \alpha^{\perp A}$ and $(\alpha\vee \beta)^{\perp A}=\Delta_A$.

In order to show that ${\rm Spec}(B)\setminus S(\alpha )=S(\beta )$, let $\psi\in {\rm Spec}(B)\setminus S(\alpha )$, hence $\alpha \subseteq \psi \cap\nabla _A$. Since $A\subseteq B$ satisfies the implication in the enunciation, by Proposition \ref{charmincg} it follows that these implications hold: if $(\alpha \vee \beta )^{\perp A}=\Delta_A $, that is $(Cg_B(\alpha \vee \beta))^{\perp A}=\Delta_A $, then $(Cg_B(\alpha \vee \beta ))^{\perp B}\subseteq \psi $, thus $Cg_B(\alpha \vee \beta )\nsubseteq \psi $, so $\alpha \vee \beta \nsubseteq \psi $, thus $\beta \nsubseteq \psi \cap \nabla _A$, so $\psi \in S(\beta)$, which proves  the inclusion ${\rm Spec}(B)\setminus S(\alpha)\subseteq S(\beta)$.

Conversely, let $\psi \in S(\beta )$, so that $\beta \nsubseteq \psi \cap \nabla _A$. But $[\alpha ,\beta ]_A=\Delta_A\subseteq \psi \cap \nabla _A\in {\rm Spec}(A)$, hence $\alpha \subseteq \psi \cap \nabla _A$, so $\psi \in {\rm Spec}(B)\setminus S(\alpha )$. Therefore $S(\beta)\subseteq {\rm Spec}(B)\setminus S(\alpha )$.

\noindent (\ref{pbBaer3})$\Rightarrow $(\ref{pbBaer2}): We have to prove that $\psi\cap \nabla _A\in {\rm Min}(A)$ for any $\psi \in {\rm Min}(B)$. Assume by absurdum that there exists $\psi \in {\rm Min}(B)$ such that $\psi \cap \nabla _A\notin {\rm Min}(A)$. But $\psi \cap \nabla _A\in {\rm Spec}(A)$ since $A\subseteq B$ is admissible, thus $\phi \subsetneq \psi \cap \nabla _A$ for some $\phi \in {\rm Min}(A)$. By Corollary \ref{Gammasurj}, there exists $\varepsilon \in {\rm Min}(B)$ such that $\phi =\varepsilon \cap \nabla _A$. So $\varepsilon \cap \nabla _A\subsetneq \psi \cap \nabla_ A$, hence there exists $(a,b)\in (\psi \cap \nabla_ A)\setminus (\varepsilon \cap \nabla _A)$, so that, if we denote by $\alpha =Cg_A(a,b)\in {\rm PCon}(A)$, then $\alpha \nsubseteq \varepsilon \cap \nabla_A$ and $\alpha \subseteq \psi \cap \nabla _A$, therefore $\varepsilon \in S(\alpha )$ and $\psi \notin S(\alpha )$. Since $S(\beta )={\rm Spec}(B)\setminus S(\alpha )$, it follows that $\varepsilon \notin S(\beta )$ and $\psi \in S(\beta )$, hence $\beta \subseteq\varepsilon \cap \nabla _A\subseteq \psi \cap \nabla_A$ and $\beta \nsubseteq \psi\cap \nabla _A$. We have obtained a contradiction, thus $A\subseteq B$ is an $m$--extension.

\noindent (\ref{pbBaer2})$\Rightarrow $(\ref{pbBaer1}): Assume that $A\subseteq B$ is an $m$--extension, so the map $\Gamma $ is surjective and continuous w.r.t. the Stone topologies by Corollary \ref{Gammasurj} and Proposition \ref{Gammacont}.(\ref{Gammacont0}).

By \cite[Theorem~$8$]{retic}, it follows that the reticulation ${\cal L}(B)$ of the hyperarchimedean algebra $B$ is a Boolean algebra. Since ${\cal M}in(B)$ and ${\cal M}in_{Id}({\cal L}(B))$ are homeomorphic, it follows that ${\cal M}in(B)$ is a Boolean space, hence ${\cal M}in(B)$ is a compact space, therefore ${\cal M}in(A)$ is also a compact space.\end{proof}

\begin{remark} Let $A$ be a reduced (that is semiprime) commutative ring and $Q(A)$ the complete ring of $A$ (see \cite{lam}). In this case, $Q(A)$ is a regular ring \cite{lam}, i.e. a hyperarchimedean ring. In accordance with \cite[Proposition~$7.2.(2)$]{picavet}, $A\subseteq Q(A)$ is a Baer extension of rings, so one can apply our Proposition 6.18. Then we obtain \cite[Theorem~$4.3$]{huckaba} as a particular case. It also results that, if $A$ is a reduced ring, then: ${\cal M}in(A)$ is compact iff $A\subseteq Q(A)$ is an $m$--extension.\end{remark}

\begin{theorem} If $A\subseteq B$ is an admissible $m$--extension such that \hypmuA , \hypcpA ,\hypmuB\ and \hypcpB\ are satisfied and $\Gamma $ is injective, then $\Gamma :{\cal M}in(B)\rightarrow {\cal M}in(A)$ is a homeomorphism and $A\subseteq B$ is a weak rigid extension.\label{thGammacont}\end{theorem}

\begin{proof} We will be using Propositions \ref{charmincg} and \ref{intersperp} and Lemma \ref{proprperp}.(\ref{proprperp1}).

Let $\alpha \in {\rm Con}(A)$ and $\mu \in {\rm Min}(A)$, so that $\mu =\nu \cap \nabla _A$ for some $\nu \in {\rm Min}(B)$ by Corollary \ref{Gammasurj}. Then:

$\mu \in D_A(\alpha )\cap {\rm Min}(A)$, that is $\alpha \nsubseteq \mu $, is equivalent to $\alpha ^{\perp A}\subseteq \mu $, which implies $\alpha ^{\perp B}=Cg
_B(\alpha )^{\perp B}=Cg_B(\alpha ^{\perp A})\subseteq Cg_B(\mu )\subseteq \nu $, thus $Cg
_B(\alpha )\nsubseteq \nu $, that is $\nu \in D_B(Cg_B(\alpha ))\cap {\rm Min}(B)$;

on the other hand, $\nu \in D_B(Cg_B(\alpha ))\cap {\rm Min}(B)$ means that $Cg_B(\alpha )\nsubseteq \nu $, so that $\alpha ^{\perp B}=Cg
_B(\alpha )^{\perp B}\subseteq \nu $, hence $\alpha ^{\perp A}=\alpha ^{\perp B}\cap \nabla _A\subseteq \nu \cap \nabla _A=\mu $, thus $\alpha \nsubseteq \mu $, that is $\mu \in D_A(\alpha )\cap {\rm Min}(A)$;

hence $\nu \in D_B(Cg_B(\alpha ))\cap {\rm Min}(B)$ iff $\Gamma(\nu )=\mu \in D_A(\alpha )\cap {\rm Min}(A)$ iff $\nu \in \Gamma ^{-1}(D_A(\alpha )\cap {\rm Min}(A))$, therefore $\Gamma ^{-1}(D_A(\alpha )\cap {\rm Min}(A))=D_B(Cg_B(\alpha ))\cap {\rm Min}(B)$.

Thus $\Gamma $ is continuous and, by Proposition \ref{negmin}, (\ref{negmin1}), for all $\theta \in {\rm Con}(A)$, $\Gamma ^{-1}(V_A(\alpha )\cap {\rm Min}(A))=\Gamma ^{-1}(D_A(\alpha ^{\perp A})\cap {\rm Min}(A))=D_B(Cg_B(\alpha ^{\perp A}))\cap {\rm Min}(B)=V_B(Cg_B(\alpha ^{\perp A})^{\perp B})\cap {\rm Min}(B)=V_B(\alpha ^{\perp B\perp B})\cap {\rm Min}(B)=V_B(Cg_B(\alpha )^{\perp B\perp B})\cap {\rm Min}(B)=V_B(Cg_B(\alpha ))\cap {\rm Min}(B)$.

Hence, if $\Gamma $ is injective and thus bijective according to Corollary \ref{Gammasurj}, then $\Gamma $ is a homeomorphism, in particular $\Gamma $ is open, thus, for every $\beta \in {\rm Con}(B)$, there exists an $\alpha \in {\rm Con}(A)$ such that $\Gamma (D_B(\beta )\cap {\rm Min}(B))=D_A(\alpha )\cap {\rm Min}(A)$, hence, by the above, along with Proposition \ref{negmin}, (\ref{negmin1}), and Proposition \ref{intersperp}, $V_B(\beta ^{\perp B})\cap {\rm Min}(B)=D_B(\beta )\cap {\rm Min}(B)=\Gamma ^{-1}(D_A(\alpha )\cap {\rm Min}(A))=\Gamma ^{-1}(D_A(\alpha ^{\perp A\perp A})\cap {\rm Min}(A))=D_B(Cg_B(\alpha ^{\perp A\perp A}))\cap {\rm Min}(B)=D_B(Cg_B(\alpha ^{\perp A})^{\perp B}))\cap {\rm Min}(B)=V_B(Cg_B(\alpha ^{\perp A})))\cap {\rm Min}(B)=V_B(Cg_B(\alpha )^{\perp B}))\cap {\rm Min}(B)$, hence, according to Lemma \ref{negradthus}, $\beta ^{\perp B}=\bigcap (V_B(\beta ^{\perp B})\cap {\rm Min}(B))=\bigcap (V_B(Cg_B(\alpha )^{\perp B}))\cap {\rm Min}(B))=Cg_B(\alpha )^{\perp B}=\alpha ^{\perp B}$, thus $A\subseteq B$ is weak rigid.\end{proof}

\begin{theorem} If $A\subseteq B$ is an admissible weak $r$--extension such that \hypmuA , \hypcpA ,\hypmuB\ and \hypcpB\ are satisfied, then $A\subseteq B$ is an $m$--extension and an $r$--extension and $\Gamma :{\cal M}in(B)\rightarrow {\cal M}in(A)$ is a homeomorphism.\label{weakr}\end{theorem}

\begin{proof} Assume that $A\subseteq B$ is an admissible weak $r$--extension, so that $\Gamma $ is surjective by Corollary \ref{Gammasurj}. We will apply Proposition \ref{charmincg}.

Assume by absurdum that there exists a $\mu \in {\rm Min}(B)$ such that $\mu \cap \nabla _A\notin {\rm Min}(A)$, so that $\phi \subsetneq \mu \cap \nabla _A$ for some $\phi \in {\rm Min}(A)$. By the above, $\phi =\varepsilon \cap \nabla _A$ for some $\varepsilon \in {\rm Min}(B)$, so that $\varepsilon \cap \nabla _A\subsetneq \mu \cap \nabla _A$, thus $\varepsilon \neq \mu $, hence $\varepsilon \setminus \mu \neq \emptyset $, that is $(x,y)\in \varepsilon \setminus \mu $ for some $x,y\in B$.

We have $Cg_B(x,y)\nsubseteq \mu $, hence there exists an $\alpha \in {\rm Con}(A)$ such that $\alpha \nsubseteq \mu $ and $Cg_B(x,y)^{\perp B}\subseteq \alpha ^{\perp B}=Cg_B(\alpha )^{\perp B}$ since $A\subseteq B$ is a weak $r$--extension. But $\alpha \nsubseteq \mu $ implies $Cg_B(\alpha )\nsubseteq \mu $, hence $Cg_B(\alpha )^{\perp B}\subseteq \mu $.

Since $Cg_B(x,y)\subseteq \varepsilon $, we have $Cg_B(\alpha )^{\perp B}\supseteq Cg_B(x,y)^{\perp B}\nsubseteq \varepsilon $, thus $\alpha \subseteq Cg_B(\alpha )\subseteq \varepsilon $, hence $\alpha \subseteq \varepsilon \cap \nabla _A=\phi \subset \mu \cap \nabla _A\subseteq \mu $, hence $Cg_B(\alpha )\subseteq \mu $, thus $Cg_B(\alpha )^{\perp B}\nsubseteq \mu $, contradicting the above.

Therefore $A\subseteq B$ is an $m$--extension.

Now let $\mu ,\nu \in {\rm Min}(B)$ such that $\mu \cap \nabla _A=\nu \cap \nabla _A$. Assume by absurdum that $\mu \neq \nu $, so that $\mu \setminus \nu \neq \emptyset $, that is $(u,v)\in \mu \setminus \nu $ for some $u,v\in B$.

Then $Cg_B(u,v)\nsubseteq \nu $, thus, since $A\subseteq B$ is a weak $r$--extension, there exists a $\xi \in {\rm Con}(A)$ such that $\xi \nsubseteq \nu $ and $Cg_B(u,v)^{\perp B}\subseteq \xi ^{\perp B}$.

Since $Cg_B(u,v)\subseteq \mu $, $Cg_B(u,v)^{\perp B}\nsubseteq \mu $, hence, by Proposition \ref{intersperp}, $Cg_B(\xi ^{\perp A})=Cg_B(\xi )^{\perp B}=\xi ^{\perp B}\nsubseteq \mu $, thus $\xi \subseteq \xi ^{\perp A\perp A}\subseteq Cg_B(\xi ^{\perp A\perp A})=Cg_B(\xi ^{\perp A})^{\perp B}\subseteq \mu $, hence $\xi \subseteq \mu \cap \nabla _A=\nu \cap \nabla _A\subseteq \nu $, contradicting the above.

Therefore $\Gamma $ is injective and thus a homeomorphism by Theorem \ref{thGammacont}.

Finally, let $\mu \in {\rm Min}(B)$ and $\beta \in {\rm PCon}(B)$ such that $\beta \nsubseteq \mu $, so that, since $A\subseteq B$ is a weak $r$--extension, there exists a $\gamma \in {\rm Con}(A)$ such that $\gamma \nsubseteq \mu $ and $\beta ^{\perp B}\subseteq \gamma ^{\perp B}$.

Then $(w,z)\in \gamma \setminus \mu $ for some $w,z\in A$, so that $Cg_A(w,z)\nsubseteq \mu $ and $Cg_A(w,z)\subseteq \gamma $, so that $Cg_B(Cg_A(w,z))\subseteq Cg_B(\gamma )$ and thus $\beta ^{\perp B}\subseteq \gamma ^{\perp B}=Cg_B(\gamma )^{\perp B}\subseteq Cg_B(Cg_A(w,z))^{\perp B}=Cg_A(w,z)^{\perp B}$.

Therefore $A\subseteq B$ is an $r$--extension.\end{proof}

\begin{theorem} If $A\subseteq B$ is an admissible weak $r^*$--extension such that \hypmuA , \hypcpA ,\hypmuB\ and \hypcpB\ are satisfied, then $A\subseteq B$ is an $m$--extension and an $r^*$--extension and $\Gamma $ is a homeomorphism w.r.t. the Stone topologies.\label{weakrstar}\end{theorem}

\begin{proof} By adapting the proof of Theorem \ref{weakr}.\end{proof}

\begin{corollary} If \hypmuA , \hypcpA ,\hypmuB\ and \hypcpB\ are satisfied and $A\subseteq B$ is admissible and either a weak $r$--extension or a weak $r^*$--extension, then $A\subseteq B$ is a weak rigid extension.\end{corollary}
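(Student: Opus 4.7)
The proof is essentially a direct assembly of the two preceding theorems, so the plan is very short. I would handle the two hypotheses (weak $r$-- and weak $r^*$--) in parallel rather than separately, since the punchline is the same in both cases.

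First I would invoke the appropriate one of Theorem \ref{weakr} or Theorem \ref{weakrstar}: if $A\subseteq B$ is an admissible weak $r$--extension, then by Theorem \ref{weakr} it is an $m$--extension and the map $\Gamma :{\rm Min}(B)\rightarrow {\rm Min}(A)$ is a homeomorphism w.r.t.\ the Stone topologies; if $A\subseteq B$ is an admissible weak $r^*$--extension, then by Theorem \ref{weakrstar} the same two conclusions hold. In either case, $A\subseteq B$ is an $m$--extension and $\Gamma $ is in particular bijective, hence injective.

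Then I would feed this into the second bullet of Theorem \ref{Gammacont}: once $A\subseteq B$ is an $m$--extension and the corresponding map $\Gamma $ is injective, that theorem asserts directly that $A\subseteq B$ is a weak rigid extension. This finishes the proof.

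There is no real obstacle here: the hard work has already been done in Theorems \ref{Gammacont}, \ref{weakr} and \ref{weakrstar}, and the corollary is just the observation that those three statements chain together. The only thing worth double--checking while writing it up is that the $\Gamma $ produced by Theorems \ref{weakr} and \ref{weakrstar} (which is built from the $m$--extension hypothesis obtained within those proofs) is literally the same $\Gamma $ used in Theorem \ref{Gammacont}, so that invoking the weak rigidity conclusion of the latter is legitimate; this is immediate from the definition $\Gamma (\mu )=\mu \cap \nabla _A$, which depends only on the inclusion $A\subseteq B$ and on $\Gamma $ being well defined, i.e.\ on $A\subseteq B$ being an $m$--extension.
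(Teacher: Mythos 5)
Your proposal is correct and is exactly the chain the paper has in mind; the paper in fact leaves this corollary without a proof block, treating it as an immediate consequence of Theorems \ref{weakr}, \ref{weakrstar} and the second part of Theorem \ref{Gammacont}, which is precisely what you assemble. Your side remark about $\Gamma$ being the same map throughout is accurate: $\Gamma(\mu)=\mu\cap\nabla_A$ depends only on the inclusion $A\subseteq B$ and is well defined as soon as $A\subseteq B$ is an $m$--extension, which is guaranteed by the theorems you cite before you invoke the weak--rigidity conclusion.
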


\begin{corollary} If $A\subseteq B$ is admissible and \hypmuA , \hypcpA ,\hypmuB\ and \hypcpB\ are satisfied, then the following are equivalent:\begin{itemize}
\item $A\subseteq B$ is a weak rigid extension;
\item $A\subseteq B$ is a weak $r$--extension;
\item $A\subseteq B$ is a weak $r^*$--extension;
\item $A\subseteq B$ is an $r$--extension;
\item $A\subseteq B$ is an $r^*$--extension.\end{itemize}\end{corollary}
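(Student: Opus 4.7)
The plan is to close a cycle of implications among the five listed conditions. Four of the six needed arrows are already available: the inclusion ${\rm PCon}(A)\subseteq{\rm Con}(A)$ makes (iv)$\Rightarrow$(ii) and (v)$\Rightarrow$(iii) immediate; the converses (ii)$\Rightarrow$(iv) and (iii)$\Rightarrow$(v) are exactly the improvements proved in Theorems \ref{weakr} and \ref{weakrstar} under admissibility; and the previous corollary supplies (ii)$\Rightarrow$(i) and (iii)$\Rightarrow$(i). So the whole equivalence reduces to proving (i)$\Rightarrow$(ii) and (i)$\Rightarrow$(iii).

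For this last step, I would give a short direct argument that does not even require admissibility. Fix $\mu\in{\rm Min}(B)$ and $\beta\in{\rm PCon}(B)$, and use the weak rigidity hypothesis to choose $\alpha\in{\rm Con}(A)$ with $\alpha^{\perp B}=\beta^{\perp B}$, that is, $Cg_B(\alpha)^{\perp B}=\beta^{\perp B}$. Since $B$ is semiprime, Proposition \ref{charmincg} applied at the minimal prime $\mu$ gives, for each $\gamma\in{\rm Con}(B)$, the equivalence $\gamma\subseteq\mu\Leftrightarrow\gamma^{\perp B}\nsubseteq\mu$. Invoked for $\gamma=\beta$ and for $\gamma=Cg_B(\alpha)$, whose perps coincide, this yields $\beta\subseteq\mu\Leftrightarrow Cg_B(\alpha)\subseteq\mu$, which in turn is equivalent to $\alpha\subseteq\mu$ because $\mu$ is already a $B$-congruence, so it contains the $A$-congruence $\alpha$ iff it contains $Cg_B(\alpha)$. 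Since the equality $\alpha^{\perp B}=\beta^{\perp B}$ supplies both inclusions required by the weak $r$- and weak $r^*$-conditions for free, the same $\alpha$ simultaneously witnesses the weak $r$-property (when $\beta\nsubseteq\mu$) and the weak $r^*$-property (when $\beta\subseteq\mu$).

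With (i)$\Rightarrow$(ii) and (i)$\Rightarrow$(iii) in hand, the cycles (i)$\Rightarrow$(ii)$\Leftrightarrow$(iv)$\Rightarrow$(i) and (i)$\Rightarrow$(iii)$\Leftrightarrow$(v)$\Rightarrow$(i) both close, yielding the five-way equivalence. I do not anticipate a real obstacle: everything is a bookkeeping assembly of earlier results, and the only subtle micro-step is the passage $Cg_B(\alpha)\subseteq\mu\Leftrightarrow\alpha\subseteq\mu$, which legitimises the transfer of conclusions from the $B$-congruence $Cg_B(\alpha)$ back to the $A$-congruence $\alpha$ itself.
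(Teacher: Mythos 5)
Your proof is correct. The bookkeeping arrows (iv)$\Rightarrow$(ii), (v)$\Rightarrow$(iii) (trivial), (ii)$\Rightarrow$(iv), (iii)$\Rightarrow$(v) (Theorems \ref{weakr}, \ref{weakrstar}), and (ii)$\Rightarrow$(i), (iii)$\Rightarrow$(i) (previous corollary) are all cited appropriately, and your direct argument for (i)$\Rightarrow$(ii),(iii) via Proposition \ref{charmincg} applied in $B$ is sound: from $\alpha^{\perp B}=\beta^{\perp B}$ one gets $\beta\subseteq\mu\Leftrightarrow\beta^{\perp B}\nsubseteq\mu\Leftrightarrow Cg_B(\alpha)^{\perp B}\nsubseteq\mu\Leftrightarrow Cg_B(\alpha)\subseteq\mu\Leftrightarrow\alpha\subseteq\mu$, and the required perp inclusions hold with equality. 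This is in substance the argument the paper itself runs inside Proposition \ref{rigid}, item (\ref{rigid2}), which gives exactly weak rigid $\Rightarrow$ weak $r$ and weak $r^*$. The one genuinely useful observation you add, and I believe it is correct, is that this step needs neither admissibility nor the $m$-extension hypothesis under which Proposition \ref{rigid} is stated: its proof invokes only Proposition \ref{charmincg} at $\mu\in{\rm Min}(B)$ together with the standing assumption that $B$ is semiprime, so the $m$-extension hypothesis in Proposition \ref{rigid} is redundant for that implication. Thus your route and the paper's intended route are essentially the same; yours just makes the hypothesis economy explicit, which also resolves what would otherwise look like a small gap in chaining through Proposition \ref{rigid} (one would seemingly need weak rigid plus admissible to yield $m$-extension before invoking it, a fact the paper never isolates).
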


\section*{Acknowledgements}

This work was supported by the research grant number IZSEZO\_186586/1, awarded to the project {\em Re\-ti\-cu\-la\-tions of Concept Algebras} by the Swiss National Science Foundation, within the programme Scientific Exchanges.


\begin{thebibliography}{99} 
\bibitem{agl} P. Agliano, Prime Spectra in Modular Varieties, {\em Algebra Universalis} {\bf 30} (1993), 581--597.





\bibitem{bal} R. Balbes, P. Dwinger, {\em Distributive Lattices}, University of Missouri Press, Columbia, Missouri, 1974.

\bibitem{bak} K. A. Baker, Primitive Satisfaction and Equational Problems for Lattices and Other Algebras, {\em Trans. Amer. Math. Soc.} {\bf 190} (1974), 125--150.




\bibitem{bhadremcgov} P. Bhattacharjee, K. M. Dress, W. Wm. McGovern, Extensions of Commutative Rings, {\em Topology and Its Applications} 158 (2011), 1802--1814.


\bibitem{blyth} T. S. Blyth, {\em Lattices and Ordered Algebraic Structures}, Springer--Verlag London Limited, 2005.

\bibitem{bur} S. Burris, H. P. Sankappanavar, {\em A Course in Universal Algebra}, Graduate Texts in Mathematics, {\bf 78}, Springer--Verlag, New York--Berlin (1981).



\bibitem{cwdw} P. Crawley, R. P. Dilworth, {\em Algebraic Theory of Lattices}, Prentice Hall, Englewood Cliffs (1973). 

\bibitem{cze} J. Czelakowski, {\em The Equationally--defined Commutator. A Study in Equational Logic and Algebra}, Birkh\"{a}user Mathematics, 2015.

\bibitem{cze2} J. Czelakowski, Additivity of the Commutator and Residuation, {\em Reports on Mathematical Logic} {\bf 43} (2008), 109--132.



\bibitem{fremck} R. Freese, R. McKenzie, {\em Commutator Theory for Congruence--modular Varieties}, London Mathematical Society Lecture Note Series {\bf 125}, Cambridge University Press, 1987.

\bibitem{gal} N. Galatos, P. Jipsen, T. Kowalski, H. Ono, {\em Residuated Lattices: An Algebraic \mbox{Glimpse at} Substructural Logics}, {\em Studies in Logic and The Foundations of Mathematics} {\bf 151}, \mbox{Elsevier}, Amsterdam/ Boston /Heidelberg /London /New York /Oxford /Paris /San Diego/ San Francisco /Singapore /Sydney /Tokyo, 2007. 




\bibitem{gulo} G. Georgescu, C. Mure\c san, Going Up and Lying Over in Congruence--modular Algebras, {\em Mathematica Slovaca} {\bf 69}, Issue 2 (2019), 275--296.

\bibitem{retic} G. Georgescu, C. Mure\c san, The Reticulation of a Universal Algebra, {\em Scientific Annals of Computer Science} {\bf 28}, Issue 1 (2018), 67--113.

\bibitem{fctretic} G. Georgescu, L. Kwuida, C. Mure\c san, Functorial Properties of the Reticulation of a Universal Algebra, {\em Journal of Applied Logics. Special Issue on MultipleValued Logic and Applications} {\bf 8} (5) (June 2021), 1123--1168.

\bibitem{gratzer} G. Gr\"{a}tzer, {\em General Lattice Theory}, Birkh\"{a}user Akademie--Verlag, Basel--Boston--Berlin (1978).

\bibitem{gralgu} G. Gr\"{a}tzer, {\em Universal Algebra}, Second Edition, Springer Science+Business Media, LLC, New York, 2008.


\bibitem{huckaba} J. A. Huckaba, {\em Commutative Rings with Zero Divizors}, Monographs and Textbooks in Pure and Applied Mathematics {\bf 117}, M. Dekker, New York, 1988.


\bibitem{joh} P. T. Johnstone, {\em Stone Spaces}, Cambridge Studies in Advanced Mathematics {\bf 3}, Cambridge University Press, Cambridge/London/New York/New Rochelle/Melbourne/Sydney, 1982.

\bibitem{bj} B. J\'{o}nsson, Congruence--distributive Varieties, {\em Math. Japonica} {\bf 42}, Issue 2 (1995), 353--401.


\bibitem{kap} J. Kaplansky, {\em Commutative Rings}, First Edition: University of Chicago Press, 1974; Second Edition: Polygonal Publishing House, 2006. 

\bibitem{klms} M. L. Knox,  R. Levy, W. W. McGovern, J. Shapiro, Generalizations of Complemented Rings with Applications to Rings of Functions, {\em Journal of Algebra and Its Applications}, {\bf 8} (01) (2009), 17--40.


\bibitem{koll} J. Koll\'{a}r, Congruences and One--element Subalgebras, {\em Algebra Universalis} {\bf 9}, Issue 1 (December 1979), 266--267. 

\bibitem{lam} J. Lambek, {\em Lectures on Rings and Modules}, Second Edition, Chelsea Publishing Company, New York, 1976.


\bibitem{mcks} R. McKenzie and J. Snow, {\em Congruence Modular Varieties: Commutator Theory and Its Uses, in Structural Theory of Automata, Semigroups, and Universal Algebra}, Springer, Dordrecht, 2005.


 



\bibitem{stcommlat} C. Mure\c san, Stone Commutator Lattices and Baer Rings, {\em Discussiones Mathematicae -- General Algebra and Applications} {\bf 42}(1) (2022), 51--96.

\bibitem{euadm} C. Mure\c san, Taking Prime, Maximal and Two--class Congruences Through Morphisms, arXiv:1607.06901 [math.RA] (2016).


\bibitem{ouwe} P. Ouwehand, {\em Commutator Theory and Abelian Algebras}, arXiv:1309.0662 [math.RA] (2013).

\bibitem{picavet} G. Picavet, Ultrafiltres sur un Espace Spectral -- Anneaux de Baer -- Anneaux \`a Spectre Minimal Compact, {\em Mathematica Scandinavica} {\bf 46}(1) (1980), 23--53. 


\bibitem{sim} H. Simmons, Reticulated Rings. {\em Journal of Algebra} {\bf 66}, Issue 1 (September 1980), 169--192.

\bibitem{speed2} T. P. Speed, Spaces of Ideals of Distributive Lattices II. Minimal Prime Ideals, {\em Journal of the Australian Mathematical Society} {\bf 18}(1) (1974), 54--72.

\bibitem{urs5} A. Ursini, On Subtractive Varieties, V: Congruence Modularity and the Commutator, {\em Algebra Universalis} {\bf 43} (2000), 51--78.\end{thebibliography}
\end{document}